\newcounter{count}
\newcommand{\ma}{\ensuremath{\mu(G)}}
\newtheoremstyle{bthm}{\baselineskip}{\baselineskip}{\slshape}{}{\bfseries}{}{ }{}
\newtheoremstyle{bex}{\baselineskip}{\baselineskip}{}{}{\sffamily}{:}{\newline }{}
\theoremstyle{bthm}
\newtheorem{theorem}{Theorem}[section]
\newtheorem{corollary}[theorem]{Corollary}
\newtheorem{lemma}[theorem]{Lemma}
\newtheorem{problem}[theorem]{Problem}
\newtheorem{conjecture}[theorem]{Conjecture}
\theoremstyle{bex}
\begin{document}
\begin{titlepage}
\title{Bounds for the Chromatic Number of some $pK_2$-Free Graphs}
\author{Athmakoori Prashant$^{1}$, S. Francis Raj$^{2}$ and M. Gokulnath$^{3}$}
\date{{\footnotesize Department of Mathematics, Pondicherry University, Puducherry-605014, India.}\\
{\footnotesize$^{1}$: 11994prashant@gmail.com\, $^{2}$: francisraj\_s@pondiuni.ac.in\, $^3$: gokulnath.math@gmail.com\ }}
\maketitle
\renewcommand{\baselinestretch}{1.3}\normalsize
\begin{abstract}
The concept of $\chi$-binding functions for classes of free graphs has been extensively studied in the past. In this paper, 
we improve the existing $\chi$-binding function for $\{2K_2, K_1 + C_4\}$-free graphs. Also, we find a linear $\chi$-binding function for $\{2K_2, K_2+P_4\}$-free graphs. In addition, we give alternative proofs for the $\chi$-binding function of $\{2K_2,gem\}$-free graphs, $\{2K_2,HVN\}$-free graphs and $\{2K_2, K_5-e\}$-free graphs. Finally,
for $p\geq3$, we find polynomial $\chi$-binding functions for $\{pK_2, H\}$-free graphs where $H\in \{gem, diamond, K_2+P_4, HVN, K_5-e, butterfly, gem^+, dart, K_1 + C_4, C_4, \overline{P_5}\}$.
\end{abstract}
\noindent
\textbf{Key Words:} Chromatic number, $\chi$-binding function, $2K_2$-free graphs and $pK_2$-free graphs. \\
\textbf{2000 AMS Subject Classification:} 05C15, 05C75
\section{Introduction}\label{intro}
All graphs considered in this paper are simple, finite and undirected.
Let $G$ be a graph with vertex set $V(G)$ and edge set $E(G)$.
For any positive integer $k$, a proper $k$-coloring of a graph $G$ is a mapping $c$ : $V(G)\rightarrow\{1,2,\ldots,k\}$ such that for any two adjacent vertices $u,v\in V(G)$, $c(u)\neq c(v)$.
If a graph $G$ admits a proper $k$-coloring, then $G$ is said to be $k$-colorable.
The chromatic number, $\chi(G)$, of a graph $G$ is the smallest $k$ such that $G$ is $k$-colorable.
All colorings considered in this paper are proper.
In this paper, $P_n,C_n$ and $ K_n$ respectively denotes the path, the cycle and the complete graph on $n$ vertices.
For $S,T\subseteq V(G)$, let $N_T(S)=N(S)\cap T$ (where $N(S)$ denotes the set of all neighbors of $S$ in $G$), let $\langle S\rangle$ denote the subgraph induced by $S$ in $G$ and let $[S,T]$ denote the set of all edges with one end in $S$ and the other end in $T$.
If every vertex in $S$ is adjacent with every vertex in $T$, then $ [S, T ]$ is said to be complete.
For any graph $G$, let $\overline{G}$ denote the complement of $G$. 
Let $H\sqsubseteq G$ mean that $H$ is an induced subgraph of $G$.

Let $\mathcal{F}$ be a family of graphs.
We say that $G$ is $\mathcal{F}$-free if it contains no induced subgraph which is isomorphic to a graph in $\mathcal{F}$.
For a fixed graph $H$, let us denote the family of $H$-free graphs by $\mathcal{G}(H)$.
For two vertex-disjoint graphs $G_1$ and $G_2$, the join of $G_1$ and $G_2$, denoted by $G_1+G_2$, is the graph whose vertex set $V(G_1+G_2) = V(G_1)\cup V(G_2)$ and the edge set $E(G_1+G_2) = E(G_1)\cup E(G_2)\cup\{xy: x\in V(G_1),\ y\in V(G_2)\}$.

A clique (independent set) in a graph $G$ is a set of pairwise adjacent (non-adjacent) vertices.
The size of a largest clique (independent set) in $G$ is called the clique number (independence number) of $G$, and is denoted by $\omega(G)\big(\alpha(G)\big)$.
When there is no ambiguity, $\omega(G)$ will be denoted by $\omega$.

A graph $G$ is said to be perfect if $\chi(H)=\omega(H)$, for every induced subgraph $H$ of $G$.
A class $\mathcal{G}$ of graphs is said to be $\chi$-bounded \cite{gyarfas1987problems} if there is a function $f$ (called a $\chi$-binding function) such that $\chi(G)\leq f(\omega(G))$, for every $G\in \mathcal{G}$.
We say that the $\chi$-binding function $f$ is special linear if $f(x)= x+c$, where $c$ is a constant.
There has been extensive research done on $\chi$-binding functions for various graph classes.
See for instance, \cite{schiermeyer2019polynomial,karthick2016vizing,randerath2004vertex}.
The study on $\chi$-binding functions and $\chi$-bounded graphs was initiated by A. Gy{\'a}rf{\'a}s in \cite{gyarfas1987problems}.
Before we move further on $\chi$-binding functions, let us state a famous result  by P. Erd\H{o}s. 
\begin{theorem}[\cite{erdos1959graph}]\label{erdos} 
For any positive integers $k,l\geq 3$, there exists a graph $G$ with girth at least $l$, where girth of $G$ is the length of the shortest cycle in $G$ and $\chi(G)\geq k$.
\end{theorem}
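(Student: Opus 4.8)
The plan is to use the probabilistic method, following Erd\H{o}s's original idea. First I would consider the random graph $G(n,p)$ on the vertex set $\{1,2,\ldots,n\}$ in which each of the $\binom{n}{2}$ possible edges appears independently with probability $p$. The whole argument hinges on choosing $p=n^{\beta-1}$ for a suitable exponent $\beta\in\big(0,\tfrac{1}{l-1}\big)$: small enough that short cycles are scarce, yet large enough that the chromatic number is forced to grow. I would then isolate two ``bad events'' and show each occurs with probability less than $\tfrac12$, so that with positive probability neither happens.

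For the first bad event, let $X$ denote the number of cycles of length less than $l$ in $G(n,p)$. Counting the potential cycles of each length $i$ with $3\le i\le l-1$ and using linearity of expectation gives $\mathbb{E}[X]\le\sum_{i=3}^{l-1}\tfrac{(np)^{i}}{2i}$. Since $np=n^{\beta}$ and $\beta(l-1)<1$, every term is $o(n)$, so $\mathbb{E}[X]=o(n)$, and Markov's inequality then yields $\Pr\!\big[X\ge n/2\big]\to 0$. For the second bad event I would control the independence number: writing $a=\lceil(3\ln n)/p\rceil$, a union bound over all $a$-subsets gives $\Pr\!\big[\alpha(G)\ge a\big]\le\binom{n}{a}(1-p)^{\binom{a}{2}}\le\big(n\,e^{-p(a-1)/2}\big)^{a}\to 0$ by the choice of $a$.

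Fixing $n$ large enough that both probabilities fall below $\tfrac12$, I can select a single graph $G$ on $n$ vertices having fewer than $n/2$ cycles of length less than $l$ and satisfying $\alpha(G)<a$. The final step is a cleanup: delete one vertex from each such short cycle. The resulting graph $G'$ has at least $n/2$ vertices and girth at least $l$ by construction, while deleting vertices cannot increase the independence number, so $\alpha(G')<a$ still holds. Using the elementary bound $\chi(G')\ge|V(G')|/\alpha(G')$, I would conclude $\chi(G')\ge\tfrac{n/2}{a}\ge\tfrac{n^{\beta}}{6\ln n}$, which exceeds $k$ once $n$ is large.

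The part requiring the most care is the calibration of $\beta$: it must simultaneously satisfy $\beta<\tfrac{1}{l-1}$ (to suppress the short cycles) and $\beta>0$ (so that $n^{\beta}/\ln n\to\infty$ drives the chromatic number past $k$). Both constraints are compatible for every $l\ge 3$, and it is precisely this compatibility that lets the two bad events be beaten at the same value of $n$. Verifying that the vertex-deletion step damages neither the girth nor the independence-number bound is the only other point that needs attention, and it is immediate since removing vertices only destroys cycles and only shrinks independent sets.
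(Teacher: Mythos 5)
Your proposal is correct: the choice $p=n^{\beta-1}$ with $0<\beta<\tfrac{1}{l-1}$, the expectation bound $\mathbb{E}[X]\le\sum_{i=3}^{l-1}\tfrac{(np)^i}{2i}=o(n)$ for cycles of length less than $l$, the union bound $\binom{n}{a}(1-p)^{\binom{a}{2}}\to 0$ for the independence number, the deletion of one vertex from each short cycle, and the final bound $\chi(G')\ge |V(G')|/\alpha(G')\ge k$ for large $n$ fit together without gaps. Note that the paper itself gives no proof of this statement — it is quoted as background from Erd\H{o}s (1959) to motivate Gy\'arf\'as's conjecture — and your argument is essentially the cited source's own method (Erd\H{o}s's deletion argument in its modern $G(n,p)$ formulation), so there is nothing in the paper to contrast it with.
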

By Theorem \ref{erdos}, A. Gy{\'a}rf{\'a}s conjectured that the only possibility for $\mathcal{G}(H)$ to  have a $\chi$-binding function is when $H$ is acyclic. 
\begin{conjecture}[\cite{gyarfas1987problems}]
$\mathcal{G}(H)$ is $\chi$-bound for every fixed forest $H$.
\end{conjecture}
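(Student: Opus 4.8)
Since this is one of the central open problems of the area—it remains unresolved in full generality—I can realistically only propose the line of attack that underlies the known partial results, and isolate the point at which it breaks down. The first move is a reduction to the connected case: it suffices to prove the statement when $H$ is a tree, because a known amalgamation argument shows that if $\mathcal{G}(T_1)$ and $\mathcal{G}(T_2)$ are both $\chi$-bounded then so is $\mathcal{G}(T_1\cup T_2)$, so one may treat the components of $H$ separately and recombine. (The hypothesis that $H$ be a forest is necessary, not merely convenient: if $H$ contains a cycle, then Theorem~\ref{erdos} produces $H$-free graphs of arbitrarily large $\chi$ with bounded $\omega$, so no binding function can exist. Thus the conjecture asserts exactly the converse direction.)

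For a tree $T$ the guiding principle is that very large chromatic number must force a rich, deep local structure that one can match against $T$, and the natural parameter to induct on is the radius $r$ of $T$. The plan is as follows. When $r=1$, $T$ is a star $K_{1,t}$, and $K_{1,t}$-free graphs already have bounded chromatic number: each neighborhood has independence number at most $t-1$ and clique number at most $\omega-1$, so by Ramsey's theorem every neighborhood has bounded size, the maximum degree is bounded, and $\chi$ is bounded. The first genuinely difficult case, $r=2$, is the Kierstead–Penrice theorem; the mechanism there is to take a vertex $v$ for which $\langle N(v)\rangle$ has large chromatic number, run a Gy\'arf\'as-type path argument inside $N(v)$ to embed the ``stem'' of $T$, and use the largeness of $\chi$ to keep selecting the next tree-vertex while avoiding the finitely many already-used vertices. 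The intended inductive step for radius $r$ is the obvious extension: in a graph $G$ with $\chi(G)$ enormous relative to $\omega(G)$ and $|V(T)|$, find a vertex $v$ whose neighborhood still carries chromatic number large enough to contain, by the inductive hypothesis, a copy of the depth-$(r-1)$ rooted subtree; attaching $v$ completes a copy of $T$ and contradicts $T$-freeness.

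The hard part—and precisely the reason the conjecture is still open—is the quantitative bookkeeping in this last step. One must guarantee that passing from $G$ to $\langle N(v)\rangle$ loses only a \emph{controlled} amount of chromatic number, uniformly over all of $G$, so that iterating $r$ times still leaves a graph of large $\chi$; but the available tools (Gy\'arf\'as paths, leveling of a breadth-first decomposition, and the various template arguments) do not deliver a bounded multiplicative loss per level for trees of arbitrary radius. This is exactly where I would expect to get stuck, and where every attempted proof to date has stalled. As supporting evidence that the obstruction is the \emph{rigidity of the embedding} rather than a failure of the underlying $\chi$-forcing phenomenon, I would point to Scott's topological relaxation, which proves the analogous statement unconditionally when one forbids all \emph{subdivisions} of $T$ instead of $T$ itself. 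In short, my plan reduces to trees, inducts on radius, and forces copies of $T$ out of neighborhoods of high-chromatic vertices; the single missing ingredient is a neighborhood-induction that provably loses only a bounded factor of $\chi$ at each level.
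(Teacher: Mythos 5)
You were asked to prove a statement that the paper itself does not prove and could not prove: this is the Gy\'arf\'as--Sumner conjecture, which the paper records purely as motivation (citing \cite{gyarfas1987problems}) and which remains open. There is therefore no proof in the paper to compare your attempt against, and your decision to present a strategy together with the point where it breaks down, rather than a purported complete argument, is the correct response --- any claim of a full proof here would necessarily have contained an error.

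Your survey of the surrounding facts is essentially accurate. The necessity of acyclicity is exactly the paper's own remark following Theorem \ref{erdos}: a graph of girth exceeding $|V(H)|$ contains no copy of any $H$ with a cycle, yet can have arbitrarily large $\chi$ with $\omega = 2$. The reduction from forests to trees is a genuine known result, but be aware that the amalgamation argument you invoke needs a double induction: given a $(T_1\cup T_2)$-free graph $G$ containing a copy of $T_1$ on a vertex set $S$, the graph induced on $V(G)\setminus N[S]$ is $T_2$-free, while each set $N(v)$, $v\in S$, has clique number at most $\omega(G)-1$, so one bounds $\chi(\langle N(v)\rangle)$ by induction on the clique number (for the same forbidden forest), not merely by the component hypothesis; without this second induction the step ``recombine'' does not close. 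The star case via Ramsey's theorem, the Kierstead--Penrice theorem for trees of radius two, and Scott's theorem for induced subdivisions are all correctly attributed, and your identification of the obstruction --- no uniform control on the loss of chromatic number when descending into neighborhoods over arbitrarily many levels --- is the standard and accurate description of why the radius induction stalls. In short: your assessment that the statement is open is right, and your outline is a faithful account of the state of the art rather than a proof, which is all that can honestly be offered.
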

The following open problem was also posed by A. Gy{\'a}rf{\'a}s in \cite{gyarfas1987problems}.
\begin{problem}[\cite{gyarfas1987problems}]\label{prob1}
What is the {order of magnitude} of the smallest $\chi$-binding function for $\mathcal{G}(2K_2)$?
\end{problem}
Motivated by Problem \ref{prob1}, we started looking at $\mathcal{G}(2K_2)$.
In \cite{wagon1980bound}, S. Wagon showed that for any $p\in\mathbb{N}$, the class of $pK_2$-free graphs admit $O(\omega^{2p-2})$ $\chi$-binding function. 
In particular, he showed that the $\chi$-binding function for $2K_2$-free graphs is $\binom{\omega+1}{2}$.
The best known lower bound for $2K_2$-free graphs given by A. Gy{\'a}rf{\'a}s \cite{gyarfas1987problems} is $\frac{R(C_4,K_{\omega(G)+1})}{3}$, where $R(C_4,K_t)$ denotes a Ramsey number, that is, $R(C_4,K_t)$ is the smallest integer $k$ such that every graph of order at least $k$ either contains a clique of size $t$ or a $2K_2$. 
F. Chung in \cite{chung1980coverings} proved that $R(C_4,K_t)$ is at least $t^{1+\epsilon}$ for some $\epsilon>0$ and hence non-linear. 
In addition, interestingly C. Brause et al. in \cite{brause2019chromatic} proved that the class of $\{2K_2,H\}$-free graphs, where $H$ is a graph with $\alpha(H)\geq 3$, does not admit a linear $\chi$-binding function.
This raised a natural question ``whether there exists any subfamilies of $2K_2$-free graphs which has a linear $\chi$-binding function?''.
In an attempt to answer this question, T. Karthick and S. Mishra in \cite{karthick2018chromatic} proved that the families of $\{2K_2, H\}$-free graphs, where $H\in \{HVN, diamond, gem, K_1 + C_4, \overline{P_5}, \overline{P_2 \cup P_3}, K_5-e\}$ admit a special linear $\chi$-binding functions.
In particular, they proved that $\{2K_2, gem\}$-free, $\{2K_2, HVN\}$-free, $\{2K_2, K_5-e\}$-free and $\{2K_2, K_1+C_4\}$-free admit the $\chi$-binding functions $\omega(G)+1$, $\omega(G)+3$, $\omega(G)+4$ and $\omega(G)+5$ respectively.
In  \cite{brause2019chromatic}, C. Brause et al. improved the $\chi$-binding function  for $\{2K_2,gem\}$-free graphs to $\max\{3, \omega(G)\}$.
They also proved that for $s\neq 1$ or $\omega(G)\neq 2$, the class of $\{2K_2, (K_1\cup K_2)+K_s\}$-free with $\omega(G)\geq 2s$ is perfect and for  $r\geq 1$, the class of $\{2K_2, 2K_1+K_r\}$-free graphs with $\omega(G)\geq 2r$  is perfect.
Clearly when $s=2$ and $r=3$, $(K_1\cup K_2)+K_s\cong HVN$ and $2K_1+K_r\cong K_5-e$ which implies that the class of $\{2K_2,HVN\}$-free and $\{2K_2,K_5-e\}$-free graphs are perfect for $\omega(G)\geq 4$ and $\omega(G)\geq 6$ respectively which improved the bounds given in \cite{karthick2018chromatic}.

In this paper, we are interested in seeing how these bounds would change when we consider $\{pK_2,H\}$-free graphs, $p\geq2$ and $H\in \{gem, diamond, K_2+P_4, HVN, K_5-e, butterfly, gem^+,$ $dart, K_1 + C_4, C_4, \overline{P_5}\}$. 

Throughout this paper, we use a particular partition of the vertex set of a graph $G$ as defined initially by S. Wagon in \cite{wagon1980bound} and later improved by A. P. Bharathi et al. in \cite{bharathi2018colouring} as follows.
Let $A=\{v_1,v_2,\ldots,v_{\omega}\}$ be a maximum clique of $G$.
Let us define the lexicographic ordering on the set $L=\{(i, j): 1 \leq i < j \leq \omega\}$ in the following way.
For two distinct elements $(i_1,j_1),(i_2,j_2)\in L$, we say that $(i_1,j_1)$ precedes $(i_2,j_2)$, denoted by $(i_1,j_1)<_L(i_2,j_2)$ if either $i_1<i_2$ or $i_1=i_2$ and $j_1<j_2$.
For every $(i,j)\in L$, let $C_{i,j}=\{v\in V(G)\backslash A:v\notin N(v_i)\cup N(v_j)\}\backslash\{\bigcup\limits_{(i',j')<_L(i,j)} C_{i',j'}\}$.
Note that, for any $k\in \{1,2,\ldots,j-1\}\backslash\{i\}$, $[v_k,C_{i,j}]$ is complete.
Hence $\omega(\langle C_{i,j}\rangle)\leq\omega(G)-j+2$. 

For $1\leq i\leq \omega$, let us define $I_i=\{v\in V(G)\backslash A: v\in N(a), \mathrm{for}\ \mathrm{every}\ a\in A\backslash \{v_i\}\}$.
Since $A$ is a maximum clique, for $1\leq i\leq \omega$, $I_i$ is an independent set and for any $x\in I_i$, $xv_i\notin E(G)$.
Clearly, each vertex in $V(G)\backslash A$ is non-adjacent to at least one vertex in $A$. 
Hence those vertices will be contained either in $I_i$ for some $i\in\{1,2,\ldots,\omega\}$, or in $C_{i,j}$ for some $(i,j)\in L$.
Thus $V(G)=A\cup\left(\bigcup\limits_{i=1}^{\omega}I_i\right)\cup\left(\bigcup\limits_{(i,j)\in L}C_{i,j}\right)$.


In this paper, we begin by giving alternate proofs for the $\chi$-binding functions obtained by C. Brause et al. for the class of $\{2K_2,gem\}$-free graphs, $\{2K_2,HVN\}$-free graphs and $\{2K_2, K_5-e\}$. Also, we have improved the $\chi$-binding function obtained by T. Karthick and S. Mishra for the class of $\{2K_2, K_1+C_4\}$-free graphs to $\omega(G)+1$ when $\omega\geq3$. 
In addition, we show that the class of $\{2K_2, K_2+P_4\}$-free graphs is $(\omega(G)+2)$-colorable.
Further, we prove that the families of $\{pK_2, H\}$-free graphs, where $H\in \{gem, diamond, K_2+P_4, HVN, K_5-e\}$ admit linear $\chi$-binding functions and the family of $\{pK_2, H\}$-free graphs, where $H\in \{butterfly, gem^+, dart\}$ admit quadratic $\chi$-binding functions.
In addition, we show that the families of $\{pK_2, H\}$-free graphs, where $H\in \{K_1 + C_4, C_4, \overline{P_5}\}$ admit $O(\omega^{p-1})$ $\chi$-binding functions.
Some graphs that are considered as a forbidden induced subgraphs in this paper are shown in Figure \ref{somesplgraphs}.
\begin{figure}[t]
	\centering
		\includegraphics[width=0.8\textwidth]{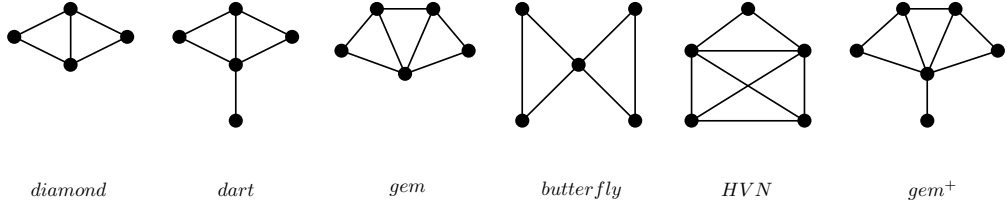}
	\caption{Some special graphs}
	\label{somesplgraphs}
\end{figure}

Notations and terminologies not mentioned here are as in \cite{west2005introduction}.
\section{$\chi$-binding functions for some $2K_2$-free graphs}\label{sect2k2}
Let us start Section \ref{sect2k2} by recalling some of the $\chi$-binding results for $2K_2$-free graphs due to S. Wagon in \cite{wagon1980bound} and S. Gaspers and S. Huang \cite{gaspers20192p_2}. 
\begin{theorem}[\cite{wagon1980bound}]\label{2k2}
If $G$ is a $2K_2$-free graph, then $\chi(G) \leq \binom{\omega(G)+1}{2}$.
\end{theorem}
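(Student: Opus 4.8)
The plan is to exploit the vertex partition $V(G)=A\cup\left(\bigcup_{i=1}^{\omega}I_i\right)\cup\left(\bigcup_{(i,j)\in L}C_{i,j}\right)$ established above and to show that, for a $2K_2$-free graph, each of these blocks can be absorbed into a single color class, yielding exactly $\binom{\omega+1}{2}$ color classes in total.

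First I would prove the central structural fact: in a $2K_2$-free graph, every set $C_{i,j}$ is independent. Suppose not, and let $xy\in E(\langle C_{i,j}\rangle)$. By the definition of $C_{i,j}$, neither $x$ nor $y$ is adjacent to $v_i$ or to $v_j$, while $v_iv_j\in E(G)$ since $A$ is a clique. Then the two edges $xy$ and $v_iv_j$, together with the absence of any edge between $\{x,y\}$ and $\{v_i,v_j\}$, exhibit an induced $2K_2$, a contradiction. Hence each $C_{i,j}$ is an independent set.

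Next I would fold the clique vertices into the $I_i$'s to avoid spending extra colors on $A$. Recall that each $I_i$ is independent and that $xv_i\notin E(G)$ for every $x\in I_i$; therefore $I_i\cup\{v_i\}$ is independent for each $i$. One checks that these $\omega$ sets are pairwise disjoint, since a vertex lying in two of them would be adjacent to all of $A$ and contradict the maximality of $A$, and that together with the $\binom{\omega}{2}$ sets $C_{i,j}$ they partition $V(G)$.

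Finally, assigning one distinct color to each of the $\omega$ independent sets $I_i\cup\{v_i\}$ and one distinct color to each of the $\binom{\omega}{2}$ independent sets $C_{i,j}$ gives a proper coloring of $G$ using $\omega+\binom{\omega}{2}=\binom{\omega+1}{2}$ colors, which is the claimed bound. The only genuinely substantive step is the independence of $C_{i,j}$, derived directly from $2K_2$-freeness; everything else is bookkeeping on the partition. The main obstacle, minor as it is here, is verifying that the chosen blocks really do partition $V(G)$ into independent sets, so that the count $\omega+\binom{\omega}{2}$ is attained and not exceeded.
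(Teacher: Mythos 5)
Your proof is correct, and it is essentially the argument the paper relies on: the theorem is cited from Wagon, but the partition $V(G)=A\cup\bigl(\bigcup_{i=1}^{\omega}I_i\bigr)\cup\bigl(\bigcup_{(i,j)\in L}C_{i,j}\bigr)$ set up in the introduction, the observation that each $C_{i,j}$ is independent under $2K_2$-freeness, and the coloring of $\{v_i\}\cup I_i$ with a single color are exactly the ingredients the paper uses throughout Section~2. Your count $\omega+\binom{\omega}{2}=\binom{\omega(G)+1}{2}$ and the disjointness checks are all in order, so nothing is missing.
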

\begin{theorem}[\cite{gaspers20192p_2}]\label{2k2k4}
If $G$ is a $2K_2$-free graph such that $\omega(G)\leq 3$, then $\chi(G)\leq 4$.
\end{theorem}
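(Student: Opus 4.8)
The plan is to reduce at once to the case $\omega(G)=3$, since for $\omega(G)\le 2$ Theorem~\ref{2k2} already gives $\chi(G)\le\binom{3}{2}=3$. So assume $\omega=3$, fix a maximum clique $A=\{v_1,v_2,v_3\}$, and work with the partition $V(G)=A\cup\left(\bigcup_{i=1}^{3}I_i\right)\cup\left(\bigcup_{(i,j)\in L}C_{i,j}\right)$ from the introduction, where now $L=\{(1,2),(1,3),(2,3)\}$. The colouring I aim for uses the three colours of the triangle $A$ together with one extra colour, so the whole argument is about redistributing the remaining six sets into four classes.

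The key structural input is a sharpening of the clique bound $\omega(\langle C_{i,j}\rangle)\le\omega-j+2$ recorded after the definition of the $C_{i,j}$. For any edge $ab$ of a $2K_2$-free graph, the set of common non-neighbours $V(G)\setminus(N[a]\cup N[b])$ is independent: two adjacent vertices there would, together with $ab$, induce a $2K_2$. Applying this to each edge $v_iv_j$ of $A$ shows that every $C_{i,j}$, being a subset of the common non-neighbours of $v_i$ and $v_j$, is an independent set; each $I_i$ is independent by construction. I would then pin down the adjacencies to $A$: a vertex of $C_{1,3}$ non-adjacent to $v_2$ would be a common non-neighbour of $v_1$ and $v_2$, hence would already lie in the lexicographically earlier set $C_{1,2}$, so $C_{1,3}\subseteq N(v_2)$; symmetrically $C_{2,3}\subseteq N(v_1)$, while the vertices of $C_{1,2}$ may be adjacent or non-adjacent to $v_3$.

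With all six sets independent, the colouring assigns $v_i$ the colour $i$ and tries to fold each of $I_1,I_2,I_3,C_{1,2},C_{1,3},C_{2,3}$ into one of the classes $1,2,3$ (a set may receive colour $i$ only if none of its vertices is adjacent to $v_i$) or into the reserve colour $4$. The main obstacle is precisely the control of the edges \emph{between} these independent sets: edges such as those in $[I_1,C_{1,3}]$ or $[C_{1,3},C_{2,3}]$ are not forced to be empty, so a naive merge fails. The heart of the proof will therefore be a short case analysis that, for each candidate assignment, invokes $2K_2$-freeness to forbid the simultaneous presence of the conflicting cross-edges (each such conflict produces two disjoint edges with no connecting edge, i.e.\ a $2K_2$), thereby yielding one admissible distribution into four classes. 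I expect this cross-edge bookkeeping, rather than the structural setup, to be the difficult and lengthy step. That the value $4$ is best possible is witnessed by $\overline{C_7}$ and by the wheel $K_1+C_5$, both $\{2K_2,K_4\}$-free with chromatic number $4$; these also explain why neither a single dominating vertex nor a single dominating edge suffices on its own, forcing the full partition argument.
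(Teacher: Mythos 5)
There is nothing in the paper to compare against: Theorem \ref{2k2k4} is not proved there, it is quoted from \cite{gaspers20192p_2}, so your attempt has to stand entirely on its own --- and it does not. Everything you actually establish (the reduction to $\omega(G)=3$ via Theorem \ref{2k2}, the independence of each $C_{i,j}$ and each $I_i$, and the inclusions $C_{1,3}\subseteq N(v_2)$, $C_{2,3}\subseteq N(v_1)$) is the routine setup that the paper itself records when introducing Wagon's partition and uses throughout Section \ref{sect2k2}. The actual theorem --- that the triangle plus these six independent sets can be coloured with four colours --- is exactly the part you defer: you announce that ``the heart of the proof will therefore be a short case analysis'' of cross-edge conflicts and concede it will be ``the difficult and lengthy step,'' but you never carry it out. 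Announcing the key step is not performing it; as written, this is a restatement of the problem in terms of the partition, not a proof.

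Moreover, there is concrete reason to doubt that the plan, as stated, can be completed. Folding whole sets gives the colour lists $I_i\in\{i,4\}$, $C_{1,3}\in\{1,3,4\}$, $C_{2,3}\in\{2,3,4\}$ and $C_{1,2}\in\{1,2,4\}$ (colour $3$ being available to $C_{1,2}$ only if it is anticomplete to $v_3$). Consider the conflict pattern in which $I_1,I_2,I_3$ pairwise have cross-edges, each $C_{i,j}$ has cross-edges to $I_i$ and $I_j$, and the three sets $C_{i,j}$ pairwise have cross-edges: since at most one $I_i$ can take colour $4$, there are four cases to check, and in every one of them the lists of $C_{1,2}$, $C_{1,3}$, $C_{2,3}$ empty out, so no whole-set assignment exists. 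No single cross-edge in this pattern is forbidden by $2K_2$-freeness; the only constraint it imposes is that endpoints in $C_{1,2}$ of edges to $C_{1,3}\cup C_{2,3}$ must be adjacent to $v_3$, which makes matters worse by deleting colour $3$ from the list of $C_{1,2}$. So your argument would have to exclude such global conflict patterns, or else abandon whole-set folding and colour vertex by vertex with much finer structural information; you give no argument for either. The fact that the proof in \cite{gaspers20192p_2} is a long and intricate case analysis occupying essentially an entire paper is strong evidence that no short bookkeeping over this crude partition will close the gap. (Your tightness examples $\overline{C_7}$ and $K_1+C_5$ are correct, but tightness was never the issue.)
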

All graphs considered in Section 2 will be $2K_2$-free and hence throughout Section \ref{sect2k2}, for every $(i,j)\in L$, $C_{i,j}$ will always be an independent set. 
Now, let us make some simple observations of $gem$-free graphs which will be useful in finding a $\chi$-binding function for $\{2K_2,gem\}$-free graphs.
\begin{lemma}\label{gemlemma}
Let $G$ be a $gem$-free graph and $V(G)=A\cup\left(\bigcup\limits_{i=1}^{\omega}I_i\right)\cup\left(\bigcup\limits_{(i,j)\in L}C_{i,j}\right)$.  For $i,j\in\{1,2,\ldots,\omega(G)\}$ such that $i<j$ and $j\geq 3$ the following holds.

\begin{enumerate}[(i)]

\item\label{Gperfect} $\langle C_{i,j}\rangle$ is $P_4$-free and hence perfect (see \cite{seinsche1974property}).

\item\label{Gadjacency} For any $l\in\{1,2,\ldots,\omega\}$, if $H$ is a component in $\langle C_{i,j}\rangle$ and $a\in V(H)$ such that $av_l\in E(G)$, then $[V(H),v_l]$ is complete.

\item\label{GIp} For $a\in C_{i,j}$ if $av_l\notin E(G)$, for some $l\in\{1,2,\ldots,\omega\}$, then $[a,I_l]=\emptyset$.

\item\label{Gclique} If $H$ is a component of $C_{i,j}$, then $\omega(H)\leq |A\backslash N_A (V(H))|$.
\end{enumerate}
\end{lemma}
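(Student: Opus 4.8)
The engine of every part is the fact recorded just before the lemma: for each $k\in\{1,\dots,j-1\}\setminus\{i\}$ the pair $[v_k,C_{i,j}]$ is complete. Since $j\ge 3$ this index set is non-empty, so I may fix once and for all a clique vertex $v_k$ adjacent to \emph{every} vertex of $C_{i,j}$; dually, by the definition of $C_{i,j}$ the vertices $v_i,v_j$ have \emph{no} neighbour in $C_{i,j}$. The plan is to reduce each assertion to exhibiting an induced $P_4$ all four of whose vertices are neighbours of one clique vertex (almost always $v_k$): together with that apex the five vertices span a $gem$, contradicting $gem$-freeness.

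For (\ref{Gperfect}), an induced $P_4$ on $w_1w_2w_3w_4$ in $\langle C_{i,j}\rangle$ would be dominated by $v_k$, so $\{v_k,w_1,w_2,w_3,w_4\}$ induces a $gem$; hence $\langle C_{i,j}\rangle$ is $P_4$-free and therefore perfect by \cite{seinsche1974property}. For (\ref{Gadjacency}) I may assume $l\notin\{1,\dots,j-1\}\setminus\{i\}$ (otherwise $[v_l,C_{i,j}]$ is already complete) and $l\neq i,j$ (otherwise $v_l$ has no neighbour in $V(H)\subseteq C_{i,j}$, contradicting $av_l\in E(G)$); thus $l>j$ and in particular $v_kv_l\in E(G)$. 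If $[V(H),v_l]$ were not complete, connectedness of $H$ supplies an edge $bc\in E(H)$ with $v_lb\in E(G)$ and $v_lc\notin E(G)$. The key step is that $v_j-v_l-b-c$ is then an induced $P_4$: the edges $v_jv_l,v_lb,bc$ are present, while $v_jb,v_jc\notin E(G)$ because $b,c\in C_{i,j}$, and $v_lc\notin E(G)$ by the choice of $bc$. As $v_k$ is adjacent to $v_j,v_l$ (clique) and to $b,c$ (complete to $C_{i,j}$), the set $\{v_k,v_j,v_l,b,c\}$ induces a $gem$, a contradiction.

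Part (\ref{GIp}) is a short case analysis on how $l$ meets the pair $(i,j)$. Writing $x\in I_l$ (so $xv_l\notin E(G)$ but $xv_m\in E(G)$ for all $m\neq l$) and assuming $ax\in E(G)$, the reasoning that ruled out $\{1,\dots,j-1\}\setminus\{i\}$ in (\ref{Gadjacency}) now forces $l\in\{i,j\}\cup\{j+1,\dots,\omega\}$. When $l>j$ the four vertices $a,x,v_i,v_l$ induce the path $a-x-v_i-v_l$ (using $ax,xv_i,v_iv_l\in E(G)$ and $av_i,av_l,xv_l\notin E(G)$), and $v_k$ is adjacent to all four; the remaining cases $l=i$ and $l=j$ are entirely similar, yielding the induced paths $v_i-v_j-x-a$ and $v_j-v_i-x-a$ respectively. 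Adding $v_k$ as apex produces a $gem$ in each case, so $ax\notin E(G)$ and $[a,I_l]=\emptyset$.

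Finally, (\ref{Gclique}) is purely structural once (\ref{Gadjacency}) is available: (\ref{Gadjacency}) says each clique vertex is either complete to $V(H)$ or has no neighbour in $V(H)$, so $N_A(V(H))$ is exactly the set of clique vertices complete to $H$. Taking a maximum clique $Q$ of $H$, the set $Q\cup N_A(V(H))$ is a clique of $G$, whence $\omega(\langle H\rangle)+|N_A(V(H))|\le\omega(G)=|A|$, giving $\omega(\langle H\rangle)\le|A\setminus N_A(V(H))|$. I expect the only genuine obstacle to be (\ref{Gadjacency}): the crux is realizing that the decisive $P_4$ is $v_j-v_l-b-c$ rather than anything internal to $H$ — that is, pressing the non-neighbour $v_j$ of $C_{i,j}$ into service as a pendant vertex — since this is precisely what keeps $v_k$ adjacent to the whole path and forces the forbidden $gem$.
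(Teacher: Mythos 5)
Your proposal is correct and takes essentially the same approach as the paper: each part is reduced to exhibiting an induced $P_4$ all of whose vertices are adjacent to a clique vertex complete to $C_{i,j}$ (your fixed $v_k$, the paper's $v_s$), and the decisive paths you construct in (\ref{Gadjacency}) and (\ref{GIp}) coincide with the paper's up to relabeling and reversal. The only differences are cosmetic: you spell out the connectivity step in (\ref{Gadjacency}), split (\ref{GIp}) into cases where the paper unifies them via $k\in\{i,j\}\setminus\{l\}$, and make explicit the clique-union argument behind (\ref{Gclique}), which the paper states follows immediately from (\ref{Gadjacency}).
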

\begin{proof}
\begin{enumerate}[(i)]

\item  Suppose there exists a $P_4\sqsubseteq \langle C_{i,j}\rangle$, say $P$.
Since $j\geq 3$, there exists a $s\in \{1,2,\ldots,j\}\backslash\{i,j\}$ such that $N(v_s)\supseteq\ V(P)$ and hence $\langle V(P)\cup\{v_s\}\rangle\cong gem$, a contradiction.

\item On the contrary, let $H$ be a component of $\langle C_{i,j}\rangle$ and $a,b\in V(H)$ such that $ab, av_l\in E(G)$ but $bv_l\notin E(G)$, for some $l\in \{1,2,\ldots,\omega\}$. Clearly, $l\in\{j+1,j+2,\ldots,\omega\}$.
Since $j\geq3$, there exists an integer $s\in \{1,2,\ldots,j\}\backslash\{i,j\}$ such that $N(v_s)\supseteq\{a,b,v_i,v_j,v_l\}$ and thus $\langle\{v_s,b,a,v_l,v_j\}\rangle\cong gem$, a contradiction.

\item Let us suppose  that there exist vertices $a\in C_{i,j}$ and $b\in I_l$ such that $av_l\notin E(G)$ and $ab\in E(G)$.
Let $k\in\{i,j\}\backslash\{l\}$.
Since $j\geq 3$, there exists an integer $s\in \{1,2,\ldots,j\}\backslash\{i,j\}$ such that $N(v_s)\supseteq\{a,b,v_k,v_l\}$ and hence $\langle\{v_s, a,b,v_k,v_l\}\rangle\cong gem$, a contradiction.

\item Follows immediately from (\ref{Gadjacency}). 
\end{enumerate}
\end{proof}
Let us now consider $\{2K_2, gem\}$-free graphs.
Theorem \ref{3oromega} has already been proved by C. Brause et al. in \cite{brause2019chromatic}.
We have given an alternative proof which is much simpler compared to the one given in \cite{brause2019chromatic}.
\begin{theorem}\label{3oromega}
If $G$ is a $\{2K_2, gem\}$-free graph, then $\chi(G) \leq \max \{ 3 ,\omega(G) \}$.
\end{theorem}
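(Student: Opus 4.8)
The plan is to split on the size of the clique number. When $\omega(G)\le 2$ the graph is triangle-free, and a direct argument shows $\chi(G)\le 3$ (the bound being attained by $C_5$); this is the only place where the additive constant is genuinely needed. For the remainder assume $\omega(G)\ge 3$, so that $\max\{3,\omega(G)\}=\omega(G)$, and I would aim for a proper $\omega(G)$-colouring. Fix a maximum clique $A=\{v_1,\dots,v_\omega\}$ and the associated partition $V(G)=A\cup\big(\bigcup_i I_i\big)\cup\big(\bigcup_{(i,j)\in L}C_{i,j}\big)$. First I would colour the easy part: give $v_i$ and every vertex of $I_i$ the colour $i$. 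This is proper, since $\{v_i\}\cup I_i$ is independent ($I_i$ is independent because $A$ is maximum, and $v_i$ is non-adjacent to all of $I_i$), and two such classes can only be joined by edges between vertices of different colours.

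It then remains to colour $\bigcup_{(i,j)}C_{i,j}$ using $\{1,\dots,\omega\}$, and here I would exploit Lemma~\ref{gemlemma}. For $x\in C_{i,j}$ set $L(x)=\{k:xv_k\notin E(G)\}$, so $\{i,j\}\subseteq L(x)$, and by the completeness of $[v_k,C_{i,j}]$ for $k<j$, $k\neq i$ noted after the definition of $C_{i,j}$, in fact $L(x)\cap\{1,\dots,j\}=\{i,j\}$. The role of Lemma~\ref{gemlemma}\,(\ref{GIp}) is that, for $j\ge 3$, assigning $x$ any colour $k\in L(x)$ creates no conflict with the already-coloured set $A\cup\big(\bigcup_i I_i\big)$: indeed $xv_k\notin E(G)$ yields $[x,I_k]=\emptyset$, so $x$ is non-adjacent to the whole colour-$k$ class built so far. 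Consequently the remaining task reduces to a proper \emph{list-colouring} of $H_C:=\langle\bigcup_{(i,j)}C_{i,j}\rangle$ with the lists $L(x)$. The single set $C_{1,2}$ (for which $j=2$ and the lemma does not apply) I would treat by hand, verifying directly that colour $1$ or $2$ can still be placed on it. Since $G$ is $2K_2$-free, $H_C$ has at most one component containing an edge, every other vertex being isolated and hence colourable greedily from its (size at least two) list.

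The crux is therefore to list-colour the unique nontrivial component $H_0$ of $H_C$, and the main obstacle is that $H_0$ need not be perfect. If $x\in C_{i,j}$ and $y\in C_{i,j'}$ with $j<j'$ are adjacent, then completeness gives $yv_j\in E(G)$ and a short argument using $2K_2$-freeness gives $xv_{j'}\in E(G)$; taking a common clique-neighbour $v_s$ with $s<j$, $s\neq i$ (which exists as $j\ge 3$) then exhibits an induced $K_1+C_4$, a configuration permitted here because only the gem is forbidden. Thus a crude ``one colour per $C_{i,j}$'' assignment cannot work, and the argument must use the finer gem-free structure — every neighbourhood $\langle N(v)\rangle$ is $P_4$-free, hence a perfect cograph — together with the fact, recorded above, that all vertices of a given $C_{i,j}$ are completely joined to $v_1,\dots,v_{j-1}$ (except $v_i$). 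These common neighbours simultaneously constrain the lists and pin down the adjacencies inside $H_0$, and I expect the verification that a consistent system of representatives from the lists $L(x)$ always exists — equivalently, that $H_0$ with these lists is list-colourable within $\{1,\dots,\omega\}$ — to be the technical heart of the proof. Once it is in place, combining it with the colouring of $A\cup\big(\bigcup_i I_i\big)$ yields $\chi(G)\le\omega(G)$, completing the case $\omega(G)\ge 3$.
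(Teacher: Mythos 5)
Your setup coincides with the paper's: the same partition, the same assignment of color $i$ to $\{v_i\}\cup I_i$, and the same use of Lemma~\ref{gemlemma}~(\ref{GIp}) to make the colors of $A$ reusable on the sets $C_{i,j}$ with $j\geq 3$. But the proof has a genuine gap at exactly the point you yourself call its ``technical heart'': you never show that the nontrivial component of $\langle\bigcup_{(i,j)}C_{i,j}\rangle$ can actually be colored from the lists $L(x)$; you only state that you ``expect'' such a verification to exist. Since everything before that point is bookkeeping, this coloring \emph{is} the theorem, and leaving it unproved leaves the argument incomplete. The treatment of $C_{1,2}$ has the same defect: ``verifying directly that colour $1$ or $2$ can still be placed on it'' is asserted, not done, and it is not automatic --- one needs the dichotomy that for $a\in C_{1,2}$ either $[a,I_1]=\emptyset$ (then color $1$ works) or $[a,I_1]\neq\emptyset$, in which case a gem-freeness argument forces $N(a)\subseteq I_1$ (then color $2$ works).

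Moreover, the structural observation that steered you into the unresolved list-coloring formulation is a red herring. The $K_1+C_4$ configurations you exhibit join $C_{i,j}$ to $C_{i,j'}$ with $j\neq j'$, i.e., they are edges \emph{between} layers; they are harmless for any scheme that gives distinct colors to distinct layers, and they do not show that a monochromatic-per-set assignment fails. What you are missing is the intra-layer fact: for fixed $j\geq 4$, the whole union $\bigcup_{i=1}^{j-1}C_{i,j}$ is independent. Indeed, if $a\in C_{i,j}$ and $b\in C_{k,j}$ with $i\neq k$ were adjacent, pick $s\in\{1,2,\ldots,j\}\setminus\{i,k,j\}$ (possible precisely because $j\geq 4$); then $a,b,v_i,v_j$ induce a $P_4$ and $v_s$ is adjacent to all four, so $\langle\{v_s,a,b,v_i,v_j\}\rangle\cong gem$, a contradiction (edges inside a single $C_{i,j}$ are already excluded by $2K_2$-freeness together with the edge $v_iv_j$). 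With this, the coloring is entirely explicit and needs no list-coloring machinery: give color $j$ to the layer $\bigcup_{i<j}C_{i,j}$ for every $j\geq 4$, colors $3$ and $2$ to $C_{1,3}$ and $C_{2,3}$, and color $1$ or $2$ to each vertex of $C_{1,2}$ according to the dichotomy above; Lemma~\ref{gemlemma}~(\ref{GIp}) rules out conflicts with the sets $I_l$. So the theorem follows by exactly the kind of direct, one-color-per-layer assignment you argued was impossible.
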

\begin{proof}
Let $G$ be a $\{2K_2, gem\}$-free graph and $V(G)=A\cup\left(\bigcup\limits_{i=1}^{\omega}I_i\right)\cup\left(\bigcup\limits_{(i,j)\in L}C_{i,j}\right)$. 
If $\omega(G)=2$, by using Theorem \ref{2k2}, the result follows.
Now, let us consider $\omega(G)\geq 3$.
Since $G$ is $gem$-free, for any $i,j\in\{1,2,\ldots,\omega\}$, $[I_i,I_j]$ is complete
(Otherwise there will exist vertices $a\in I_i$ and $b\in I_j$ such that $ab\notin E(G)$. Since $\omega(G)\geq 3$, we can find $p\in \{1,2,\ldots,\omega\}\backslash\{i,j\}$ such that $\langle\{v_p,a,v_j,v_i,b\}\rangle\cong gem$, a contradiction).
For $j\geq 4$, we claim that $\cup_{i=1}^{j-1} C_{i,j}$ is an independent set.
If there exist vertices $a\in C_{i,j}$ and $b\in C_{k,j}$ such that $ab\in E(G)$, $i,k\in\{1,2,\ldots,j-1\}$ and $i\neq k$, then we can find an integer $s\in \{1,2,\ldots,j\}\backslash \{i,k,j\}$ such that $av_s,bv_s\in E(G)$ and hence $\langle\{v_s,a,b,v_i,v_j\}\rangle\cong gem$, a contradiction.
Next, let us consider $C_{1,2}$.
If $a\in C_{1,2}$ such that $[a,I_1]\neq \emptyset$, then we shall show that for $p\neq1$, $[a,\{v_p\}\cup I_p]=\emptyset$ and $[a,C_{i,j}]=\emptyset$ for every $(i,j)\in L\backslash \{(1,2)\}$, that is, $N(a)\subseteq I_1$.
Let $a\in C_{1,2}$ and $b\in I_1$ such that $ab\in E(G)$.
If $s\in \{1,2\ldots,\omega\}\backslash\{1,2\}$ such that $av_s\in E(G)$, then $\langle\{v_s,a,b,v_2,v_1\}\rangle\cong gem$, a contradiction. 
In a similar fashion we can show that, for $p\neq 1$, $[a,I_p]=\emptyset$.
Finally, if there exist vertices $a\in C_{1,2}$ and $c\in C_{i,j}$ such that $ac\in E(G)$, then by using the fact that $[a,A]=\emptyset$, $\langle\{a,c, v_i, v_j\}\rangle\cong 2K_2$, a contradiction.

Let us next establish an $\omega$-coloring for $G$ using the colors  $\{1,2,\ldots,\omega\}$.
For $1\leq i\leq \omega$, let us give the color $i$ to the vertices in $\{v_i\}\cup I_i$.
By using (\ref{GIp}) of Lemma \ref{gemlemma}, for $j\geq 4$, we can assign the color $j$ to the vertices in $\cup_{i=1}^{j-1} C_{i,j}$ and assign the colors $3$ and $2$ to the vertices in $C_{1,3}$ and $C_{2,3}$ respectively.
Finally for $C_{1,2}$, if $a\in C_{1,2}$ such that $[a,I_1]=\emptyset$, then we can assign the color $1$ to $a$. If not, $N(a)\subseteq I_1$ and hence we can assign the color $2$.
Clearly this is a proper coloring of $G$ and thus $\chi(G)\leq \omega(G)$.
\end{proof}
Now without much difficulty, one can observe that the properties of $gem$-free graphs mentioned in Lemma \ref{gemlemma} will hold for $\{K_2+P_4\}$-free graphs. 
\begin{lemma}\label{pk2k2+p4lemma}
Let $G$ be a $\{K_2+P_4\}$-free graph and $V(G)=A\cup\left(\bigcup\limits_{i=1}^{\omega}I_i\right)\cup\left(\bigcup\limits_{(i,j)\in L}C_{i,j}\right)$.
For $i,j\in\{1,2,\ldots,\omega(G)\}$ such that $i<j$ and $j\geq 4$ the following holds.

\begin{enumerate}[(i)]

\item\label{Gperfect1}  $\langle C_{i,j}\rangle$ is $P_4$-free and hence it is perfect.
\item\label{Gadjacency1}  For $j<l$, let $H$ be a component in $\langle C_{i,j}\rangle$ and let $a\in V(H)$ such that $av_l\in E(G)$, then $[V(H),v_l]$ is complete.
\item\label{GIp1} For $l\leq\omega$, let $a\in C_{i,j}$ such that $av_l\notin E(G)$, then $[a,I_l]=\emptyset$.
\item\label{Gclique1} If $H$ is a component of $C_{i,j}$, then $\omega(H)\leq |A\backslash N_A (V(H))|$.
\end{enumerate}
\end{lemma}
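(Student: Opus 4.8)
The plan is to follow the proof of Lemma \ref{gemlemma} almost verbatim, exploiting the fact that $K_2+P_4$ is obtained from a $gem$ by adding a second apex vertex that is adjacent both to the whole $P_4$ and to the first apex. Concretely, wherever the $gem$-free argument derived a contradiction by exhibiting a single clique vertex $v_s\in\{1,\dots,j-1\}\setminus\{i\}$ complete to some induced $P_4$, I would instead exhibit two \emph{distinct} such vertices $v_s,v_t$. Since $v_s,v_t\in A$ they are adjacent, so $\langle\{v_s,v_t\}\rangle\cong K_2$, and being complete to the $P_4$ they jointly create an induced $K_2+P_4$. Selecting two distinct indices in $\{1,\dots,j-1\}\setminus\{i\}$ is possible precisely because that set has $j-2$ elements, which is exactly why the hypothesis strengthens from $j\geq 3$ to $j\geq 4$; this is the only structural change from Lemma \ref{gemlemma}.

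For (\ref{Gperfect1}) I would assume $P\sqsubseteq\langle C_{i,j}\rangle$ is an induced $P_4$, pick distinct $s,t\in\{1,\dots,j-1\}\setminus\{i\}$, and note that $[v_s,C_{i,j}]$ and $[v_t,C_{i,j}]$ are complete, so $\langle V(P)\cup\{v_s,v_t\}\rangle\cong K_2+P_4$, a contradiction; perfection then follows from \cite{seinsche1974property} as in (\ref{Gperfect}). For (\ref{Gadjacency1}), assuming $a,b\in V(H)$ with $ab,av_l\in E(G)$ and $bv_l\notin E(G)$ for some $l>j$, the four vertices $b,a,v_l,v_j$ induce a $P_4$ (all required non-edges hold since $a,b\in C_{i,j}$ miss $v_j$ and $b$ misses $v_l$); two distinct $v_s,v_t$ chosen from $\{1,\dots,j-1\}\setminus\{i\}$ are complete to $\{a,b\}$ and adjacent to $v_l,v_j$, so $\langle\{v_s,v_t,b,a,v_l,v_j\}\rangle\cong K_2+P_4$, a contradiction.

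Part (\ref{GIp1}) is where I expect the one genuinely new wrinkle, so I would treat the index $l$ carefully. Suppose $a\in C_{i,j}$, $b\in I_l$, $av_l\notin E(G)$ and $ab\in E(G)$. The key preliminary observation is that $l\in\{1,\dots,j-1\}\setminus\{i\}$ is impossible, since for such $l$ the set $[v_l,C_{i,j}]$ is complete and would force $av_l\in E(G)$; hence only $l\in\{i,j\}$ or $l>j$ survive. Taking $k\in\{i,j\}\setminus\{l\}$, the vertices $a,b,v_k,v_l$ induce a $P_4$ (using $a\notin N(v_i)\cup N(v_j)$, $b\in I_l$, and $av_l\notin E(G)$), and in each surviving case the admissible apex set $\{1,\dots,j-1\}\setminus\{i\}$ still has $j-2\geq 2$ elements, so two distinct $v_s,v_t$ complete to $\{a,b,v_k,v_l\}$ can be chosen, again yielding $K_2+P_4$. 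The main obstacle is exactly this bookkeeping: one must verify that the two apices are distinct from the $P_4$ and from each other, and adjacent to $b$ (which needs $s,t\neq l$); the vacuousness of the case $l\in\{1,\dots,j-1\}\setminus\{i\}$ is precisely what keeps the available count at $j-2$.

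Finally, (\ref{Gclique1}) would follow from (\ref{Gadjacency1}) exactly as (\ref{Gclique}) follows from (\ref{Gadjacency}). By (\ref{Gadjacency1}), each $v_l$ with $l>j$ is either complete to $V(H)$ or non-adjacent to all of $V(H)$, while every $v_k$ with $k\in\{1,\dots,j-1\}\setminus\{i\}$ is complete to $V(H)$ and $v_i,v_j$ miss $V(H)$; hence $N_A(V(H))$ is exactly the set of clique vertices complete to $V(H)$. Taking a maximum clique $Q$ of $H$ together with $N_A(V(H))$ then gives a clique of $G$, so $|Q|+|N_A(V(H))|\leq\omega$, and since $N_A(V(H))\subseteq A$ this rearranges to $\omega(H)=|Q|\leq |A|-|N_A(V(H))|=|A\setminus N_A(V(H))|$, as required.
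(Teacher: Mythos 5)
Your proof is correct and takes essentially the same approach as the paper, whose entire proof is the remark that the arguments of Lemma \ref{gemlemma} carry over: you replace the single apex $v_s$ by two distinct adjacent apices $v_s,v_t\in\{v_1,\dots,v_{j-1}\}\setminus\{v_i\}$, which is exactly what the strengthened hypothesis $j\geq 4$ guarantees. Your extra bookkeeping in (\ref{GIp1}) (discarding the vacuous case $l\in\{1,\dots,j-1\}\setminus\{i\}$ and checking $s,t\neq l$) just makes explicit details the paper leaves implicit.
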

\begin{proof}
Proof follows by similar arguments as given in Lemma \ref{gemlemma}.
\end{proof}
Also, it can be seen that many of the properties of $\{2K_2,gem\}$-free graphs with clique size at least $3$ which is mentioned in Theorem \ref{3oromega} will even hold for $\{2K_2,K_2+P_4\}$-free graphs with clique size at least $4$.
Now, let us establish a $\chi$-binding function for $\{2K_2,K_2+P_4\}$-free graphs.
\begin{theorem}\label{2k2k2+p4}
If $G$ is a $\{2K_2, K_2+P_4\}$-free graph with $\omega(G)\geq 4$, then $\chi(G)\leq\omega(G) + 2$.
\end{theorem}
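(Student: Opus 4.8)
The plan is to mimic the constructive coloring in Theorem \ref{3oromega}, but using the weaker Lemma \ref{pk2k2+p4lemma} (which only controls $C_{i,j}$ for $j\geq 4$) and paying for the loss of control with two extra colors. Throughout I work with the partition $V(G)=A\cup\left(\bigcup_{i=1}^{\omega}I_i\right)\cup\left(\bigcup_{(i,j)\in L}C_{i,j}\right)$, and I use that $G$ is $2K_2$-free so every $C_{i,j}$ is independent. The first step is to pin down the structure of $A\cup\bigcup_i I_i$. I claim $[I_i,I_j]$ is complete for all $i\neq j$: if some $a\in I_i$ and $b\in I_j$ were non-adjacent, then since $\omega\geq 4$ I can pick distinct $v_p,v_q$ with $p,q\notin\{i,j\}$, and then $\{v_p,v_q\}$ induces a $K_2$ each of whose vertices is adjacent to all of $a,v_j,v_i,b$, while $a,v_j,v_i,b$ (in this order) induce a $P_4$; hence $\langle\{v_p,v_q,a,v_j,v_i,b\}\rangle\cong K_2+P_4$, a contradiction. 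Consequently $\langle A\cup\bigcup_i I_i\rangle$ is complete multipartite with parts $\{v_i\}\cup I_i$, so assigning color $i$ to $\{v_i\}\cup I_i$ gives a proper $\omega$-coloring of this part of $G$.

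Next I absorb the ``high-index'' sets. For $j\geq 5$ I would first show $\bigcup_{i<j}C_{i,j}$ is independent: if $a\in C_{i,j}$ and $b\in C_{i',j}$ ($i<i'<j$) were adjacent, then $a,b,v_i,v_j$ (in this order) induce a $P_4$ (here I use that $a$ is adjacent to $v_{i'}$ and $b$ to $v_i$, while both are non-adjacent to $v_j$), and since $j\geq 5$ there are two indices $s,t\in\{1,\dots,j-1\}\setminus\{i,i'\}$, so $\{v_s,v_t\}$ is a $K_2$ complete to this $P_4$, again forcing $K_2+P_4$. By Lemma \ref{pk2k2+p4lemma}(\ref{GIp1}) each such vertex is non-adjacent to $I_j$, so I can safely give color $j$ to all of $\bigcup_{i<j}C_{i,j}$. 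For $j=4$ the two-apex argument fails (only one index remains in $\{1,2,3\}\setminus\{i,i'\}$), so I instead give $C_{i,4}$ the color $i\in\{1,2,3\}$: distinct values of $i$ land the three sets $C_{1,4},C_{2,4},C_{3,4}$ in distinct classes, and Lemma \ref{pk2k2+p4lemma}(\ref{GIp1}) (valid since $j=4$) guarantees $C_{i,4}$ is non-adjacent to $\{v_i\}\cup I_i$. After this, color classes $1,2,3$ are $\{v_i\}\cup I_i\cup C_{i,4}$, class $4$ is $\{v_4\}\cup I_4$, and each class $j\in\{5,\dots,\omega\}$ is $\{v_j\}\cup I_j\cup\bigcup_{i<j}C_{i,j}$.

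The only vertices that remain uncolored lie in $C_{1,2}\cup C_{1,3}\cup C_{2,3}$, and this is the main obstacle. Here Lemma \ref{pk2k2+p4lemma} gives no information, because its hypotheses require $j\geq 4$; moreover the clean ``$N(a)\subseteq I_1$'' dichotomy that made the gem case collapse into $\omega$ colors is unavailable, since $K_2+P_4$ needs \emph{two} adjacent apex vertices rather than the single apex of a gem, so a lone neighbor of $a$ no longer produces a forbidden subgraph. I would therefore analyze directly the mutual adjacencies among these three independent sets and their adjacencies to $I_1,I_2,I_3$, exploiting $2K_2$-freeness (for instance, for $a\in C_{1,2}$ every neighbor of $a$ must meet $N(v_1)\cup N(v_2)$, since otherwise $a$ and that neighbor together with the edge $v_1v_2$ form a $2K_2$) together with $K_2+P_4$-freeness, with the goal of showing that $C_{1,2}\cup C_{1,3}\cup C_{2,3}$ can be properly colored using only the two new colors $\omega+1$ and $\omega+2$ (for example by proving that one of the three sets — or a suitable union of two of them — is independent, so the three reduce to two color classes). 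Establishing exactly this containment in two colors is where the real combinatorial work lies; the rest of the argument is the routine bookkeeping sketched above, and it yields $\chi(G)\leq\omega(G)+2$.
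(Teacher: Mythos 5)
Your preparatory steps are sound and essentially identical to the paper's: the completeness of $[I_i,I_j]$ via two apex vertices (which is where $\omega(G)\geq 4$ is used), the independence of $\bigcup_{i<j}C_{i,j}$ for $j\geq 5$, and the recycling of old colors for $C_{1,4},C_{2,4},C_{3,4}$ justified by Lemma \ref{pk2k2+p4lemma}~(\ref{GIp1}). But these are the routine parts. The theorem is not proved, because at the only place where the bound $\omega(G)+2$ is actually at stake --- the sets $C_{1,2},C_{1,3},C_{2,3}$, which Lemma \ref{pk2k2+p4lemma} does not reach --- you state a goal instead of an argument. Worse, the goal you pose (properly coloring $C_{1,2}\cup C_{1,3}\cup C_{2,3}$ using only the two new colors $\omega+1,\omega+2$, i.e.\ showing that this union of three independent sets induces a bipartite graph) is strictly stronger than what is needed, is supported by nothing in your text, and there is no evident route to it.

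The missing idea in the paper is a dichotomy on $C_{2,3}$ alone, not on the union. One proves: if $a\in C_{2,3}$ has a neighbor $b\in I_3$, then $[a,I_p]=\emptyset$ for every $p\geq4$ and $[a,A\setminus\{v_1\}]=\emptyset$; indeed, a neighbor $c\in I_p$ (or a neighbor $v_p$ with $p\geq 4$) yields $\langle\{v_1,c,a,b,v_2,v_3\}\rangle\cong K_2+P_4$, where $a,b,v_2,v_3$ induce the $P_4$ and $v_1c$ is the dominating edge (here the already-established completeness of $[I_3,I_p]$ supplies $bc\in E(G)$). From this it follows that $[a,C_{3,4}]=\emptyset$ as well, since a neighbor $d\in C_{3,4}$ would give $\langle\{a,d,v_3,v_4\}\rangle\cong 2K_2$. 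With this dichotomy $C_{2,3}$ needs no new color: vertices of $C_{2,3}$ with no neighbor in $I_3$ join color class $3$, and the remaining ones join color class $4$ together with $\{v_4\}\cup I_4\cup C_{3,4}$ --- which is exactly why the paper assigns $C_{3,4}$ the color $4$ rather than $3$. Note that your choice of giving $C_{3,4}$ the color $3$ would obstruct even this completion, because the vertices of $C_{2,3}$ with $[a,I_3]=\emptyset$ are not known to satisfy $[a,C_{3,4}]=\emptyset$. Only $C_{1,2}$ and $C_{1,3}$, each independent by $2K_2$-freeness, then take the new colors $\omega+1$ and $\omega+2$. Until you supply an argument of this kind (or an actual proof of your bipartiteness claim), the proposal has a genuine gap at its core.
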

\begin{proof}
Let $G$ be a $\{2K_2,K_2+P_4\}$-free graph with $\omega(G)\geq 4$.
As done in Theorem \ref{3oromega}, by using similar arguments, we can observe that for $1\leq i,j\leq \omega(G)$, $[I_i,I_j]=complete$ and for $j\geq 5$, $\bigcup_{i=1}^{j-1} C_{i,j}$ is an independent set.
Further we can also show that, if there exists a vertex $a\in C_{2,3}$ such that $[a,I_3]\neq \emptyset$, then for any $p\geq 4$, $[a,I_p]=\emptyset$, $[a,A\backslash\{v_1\}]=\emptyset$ and $[a,C_{3,4}]=\emptyset$. Let $a\in C_{2,3}$, $b\in I_3$ such that $ab\in E(G)$.
Suppose $[a,I_p]\neq\emptyset$, then there exists $c\in I_p$ for $p\geq 4$ such that $ac\in E(G)$. Hence, $\langle\{v_1,c,a,b,v_2,v_3\}\rangle\cong K_2+P_4$, a contradiction.
Similarly, we can also show that $[a,A\backslash\{v_1\}]=\emptyset$.
Suppose $d\in C_{3,4}$ such that $ad\in E(G)$, then by using the fact that $[a,A\backslash\{v_1\}]=\emptyset$, we get that $\langle\{a,d,v_3,v_4\}\rangle\cong 2K_2$, a contradiction.

Now let us give an $(\omega+2)$-coloring for $G$ using the colors $\{1,2,\ldots,\omega+2\}$.
For $1\leq i\leq \omega$, let us assign the color $i$ to the vertices in$\{v_i\}\cup I_i$.
For $j\geq 5$, assign the color $j$ to the vertices in$\cup_{i=1}^{j-1}C_{i,j}$ and assign the colors $1$, $2$, $4$, $\omega+1$ and $\omega+2$ to the vertices in $C_{1,4}$, $C_{2,4}$, $C_{3,4}$, $C_{1,2}$ and $C_{1,3}$ respectively.
For a vertex $a\in C_{2,3}$ if $[a,I_3]=\emptyset$, then assign the color $3$ to the vertices in $C_{2,3}$ or else assign  the color $4$.
Clearly this would be a proper coloring for $G$ using $\omega(G)+2$ colors.
\end{proof}
In \cite{karthick2018chromatic}, T. Karthick and S. Mishra have shown that if $G$ is a $\{2K_2, HVN\}$-free graph, then $\chi(G)\leq \omega(G) + 3$.
In addition, C. Brause et al. in \cite{brause2019chromatic} have already proved that if $G$ is a $\{2K_2, HVN\}$-free graph with $\omega\geq4$, then $G$ is a perfect. 
Theorem \ref{2k2hvn} becomes a corollary to this but the proof given by us is much simpler compared to the one given in \cite{brause2019chromatic}. 
\begin{theorem}\label{2k2hvn}
If $G$ is a $\{2K_2, HVN\}$-free graph, such that $\omega\geq 4$, then $\chi(G)=\omega(G)$.
\end{theorem}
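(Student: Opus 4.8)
The plan is to mirror the structure of the preceding proofs for $\{2K_2,gem\}$-free and $\{2K_2,K_2+P_4\}$-free graphs, using the same Wagon--Bharathi partition $V(G)=A\cup\left(\bigcup_{i=1}^{\omega}I_i\right)\cup\left(\bigcup_{(i,j)\in L}C_{i,j}\right)$. Since we want $\chi(G)=\omega(G)$ exactly (and $\chi(G)\geq\omega(G)$ is automatic), the whole task reduces to producing a proper $\omega$-coloring. I would first record the structural consequences of $HVN$-freeness on this partition. Recall $HVN=K_1+(K_1\cup K_3)$, equivalently $K_4$ with a pendant-type vertex adjacent to two of the four clique vertices; the key is that $HVN$ contains a $K_4$, so whenever a vertex sees two adjacent vertices inside a clique-rich region together with a triangle of $A$, a forbidden $HVN$ appears. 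Concretely, I expect to show: for $1\leq i,j\leq\omega$ the join $[I_i,I_j]$ is complete (same argument as before, since a nonedge between $a\in I_i$, $b\in I_j$ plus two further clique vertices of $A$ builds an $HVN$, using $\omega\geq4$); and each $\langle C_{i,j}\rangle$ is an independent set, or more strongly that the sets $\bigcup_{i<j}C_{i,j}$ collapse into very few color classes.

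Next I would analyze the $C_{i,j}$ sets directly. Because $\omega\geq4$, for any $(i,j)$ with $j\geq3$ there are at least two indices $s\in\{1,\dots,\omega\}\setminus\{i,j\}$, and the vertices $v_s$ are all adjacent to everything in $C_{i,j}$ (by the definition of the partition, $[v_k,C_{i,j}]$ is complete for $k<j$, $k\neq i$). This abundance of common neighbors is exactly what forces strong restrictions: an edge $ab$ inside a $C_{i,j}$ together with two such common neighbors $v_s,v_t$ would give a $K_4$ on $\{a,b,v_s,v_t\}$, and then any further vertex of $A$ adjacent to the right pair yields an $HVN$. I would use this to argue that each $\langle C_{i,j}\rangle$ is edgeless and, crucially, that the edges between different $C$-sets and between $C$-sets and the $I_\ell$'s are extremely limited. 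The $2K_2$-freeness then handles the cross edges to $A$: a vertex of $C_{i,j}$ has $[\{a\},A]$ missing the pair $v_i,v_j$, so an edge from $a$ into some other far region creates a $2K_2$ with the edge $v_iv_j$, exactly as in Theorem \ref{3oromega}.

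With these structural facts in hand, the coloring step should be routine bookkeeping. I would assign color $i$ to $\{v_i\}\cup I_i$ for $1\leq i\leq\omega$ (legitimate because the $I_i$ are independent and the joins $[I_i,I_j]$ are complete, so no two same-colored vertices are adjacent), and then slot each $C_{i,j}$ into an already-used color class by picking an index of $A$ not adjacent to $C_{i,j}$, precisely as done in the two earlier theorems via part (\ref{GIp}) of the adjacency lemma. The payoff of $HVN$-freeness over $gem$-freeness is that the troublesome low-index sets $C_{1,2},C_{1,3},C_{2,3}$ turn out to be colorable with existing colors $1$ through $\omega$ rather than needing extra colors, which is what upgrades the bound from $\omega+2$ or $\max\{3,\omega\}$ down to exactly $\omega$.

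The main obstacle, as in the companion proofs, will be the exceptional sets with small indices, especially $C_{1,2}$, where few common neighbors are available in $A$ and the $HVN$/$2K_2$ arguments are tightest. I expect the crux to be showing that any vertex $a\in C_{1,2}$ with a neighbor in some $I_\ell$ has its neighborhood confined to a single independent set (so that $a$ can safely reuse color $\ell$), and dually that a vertex of $C_{1,2}$ with no such neighbor can take color $1$ or $2$. Verifying that these two cases exhaust $C_{1,2}$ and never force a conflict — together with checking that the handful of remaining small-index $C$-sets do not collide with the $I_i$ colors — is where the real case analysis lives; everything else transfers almost verbatim from Theorems \ref{3oromega} and \ref{2k2k2+p4}.
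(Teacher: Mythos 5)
Your plan does share the paper's skeleton (the Wagon--Bharathi partition, color classes $\{v_i\}\cup I_i$, and a final case analysis on $C_{1,2}$), but as written it misses the pivotal mechanism and would not close. The paper's proof turns on the observation that the forbidden $HVN$ can be built with its $K_4$ lying \emph{entirely inside $A$} (or inside $A$ together with a vertex of some $I_l$), with the vertex under scrutiny as the degree-two attachment: any $a\in C_{i,j}$ with $j\geq 4$ has two guaranteed neighbors $v_s,v_q$ and two non-neighbors $v_i,v_j$ in $A$, so $\langle\{a,v_s,v_q,v_i,v_j\}\rangle\cong HVN$ and hence $C_{i,j}=\emptyset$ for all $j\geq 4$. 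The same mechanism pins $N_A(C_{1,3})=\{v_2\}$, $N_A(C_{2,3})=\{v_1\}$ and $|N_A(a)|\leq 1$ for $a\in C_{1,2}$, and then (using $\omega\geq 4$ to pick two non-neighbors in $A$) confines $N_B(C_{1,3})\subseteq\{v_2\}\cup I_2$, $N_B(C_{2,3})\subseteq\{v_1\}\cup I_1$, and $N_B(a)\subseteq\{v_s\}\cup I_s$ for each $a\in C_{1,2}$, where $B=A\cup\left(\bigcup_{i=1}^{\omega}I_i\right)$. Your $HVN$-hunting instead always manufactures the $K_4$ from an edge $ab$ \emph{inside} a $C$-set plus common neighbors; that is simultaneously redundant (each $C_{i,j}$ is already independent by $2K_2$-freeness, as noted at the start of Section \ref{sect2k2}) and too weak, because it never produces the collapse (empty high-index sets, pinned $N_A$) that is what allows exactly $\omega$ colors rather than $\omega+O(1)$.

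Three concrete defects. First, your claim that for every $j\geq 3$ the partition guarantees at least two indices $s$ with $[v_s,C_{i,j}]$ complete is false for $j=3$: the definition only guarantees $k\in\{1,\ldots,j-1\}\setminus\{i\}$, which for $C_{1,3},C_{2,3}$ is a single vertex --- and these, together with $C_{1,2}$, are the only sets that survive, i.e.\ exactly where your "abundance of common neighbors" is unavailable. Second, you cannot invoke (\ref{GIp}) of Lemma \ref{gemlemma} to slot $C$-sets into old color classes: that lemma is proved for $gem$-free graphs, and its $HVN$ analog, while true, requires a separate two-step argument (first pin $N_A$ down to one vertex, then use two non-neighbors in $A$, which needs $\omega\geq 4$, to exclude neighbors in $I_l$); nothing in your sketch supplies this. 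Third, the coloring rule at your declared crux is backwards: if $N(a)$ is confined to $\{v_\ell\}\cup I_\ell$, then $a$ must \emph{avoid} color $\ell$ and may take any other color --- this is why the paper colors a vertex $a\in C_{1,2}$ with $4$ when $N_B(a)\subseteq\{v_3\}\cup I_3$ and with $3$ otherwise. With the emptiness and neighborhood-pinning facts added, your outline becomes essentially the paper's proof; without them it does not yield an $\omega$-coloring.
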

\begin{proof}
Let $G$ be a $\{2K_2, HVN\}$-free graph with $\omega(G)\geq 4$.
Since $G$ is $HVN$-free and $\omega(G)\geq 4$, for any $i,j\in\{1,2,\ldots,\omega\}$, $[I_i,I_j]$ is complete  
(Otherwise there will exist vertices $a\in I_i$ and $b\in I_j$ such that $ab\notin E(G)$. Since $\omega(G)\geq 4$, we can find $p,q\in \{1,2,\ldots,\omega\}\backslash\{i,j\}$ such that $\langle\{a,b,v_j,v_p,v_q\}\rangle\cong HVN$, a contradiction).
Now, for $j\geq4$, we can observe that $C_{i,j}=\emptyset$.
Suppose there exists a vertex $a\in C_{i,j}$, then we can find $s,q\in \{1,2,\ldots,j\}\backslash\{i,j\}$ such that $av_s,av_q\in E(G)$ and $\langle \{a,v_s,v_q,v_i,v_j\}\rangle\cong HVN$, a contradiction.
Thus $V(G)= A \cup C_{1,2}\cup C_{1,3}\cup C_{2,3}\cup\left(\bigcup\limits_{i=1}^{\omega}I_i\right)$. Let $B=A\cup\left(\bigcup\limits_{i=1}^{\omega}I_i\right)$.
By using similar arguments, we can also observe that $N_A (C_{1,3})=\{v_2\}$, $N_A (C_{2,3})=\{v_1\}$ and if a vertex $a\in C_{1,2}$ then $\left|[a,A]\right|\leq 1$. 
In addition, $N_B (C_{1,3})\subseteq(\{v_2\}\cup I_2)$. 
Suppose $N_B (C_{1,3})\not\subseteq (\{v_2\}\cup I_2)$, then there exists $a\in C_{1,3}$ and $b\in I_k$, for some $k\neq 2$, such that $ab\in E(G)$.
Since $\omega(G)\geq 4$ and $N_A (C_{1,3})=\{v_2\}$, there exist $s,q\in \{1,2,\ldots,\omega\}\backslash\{2,k\}$ such that $av_s,av_q\notin E(G)$ and thus $\langle \{a,v_2,b,v_s,v_q\}\rangle\cong HVN$, a contradiction.
Similar arguments will show that $N_B (C_{2,3})\subseteq(\{v_1\}\cup I_1)$ and if $a\in C_{1,2}$, then for some $s\in \{1,2,\ldots,\omega\}$, $N_B (C_{1,2})\subseteq(\{v_s\}\cup I_s)$.

Let us now exhibit an $\omega$-coloring for $G$ using $\{1,2,\ldots,\omega\}$ colors.
For $1\leq i\leq \omega$, let us give the color $i$ to the  the vertices in$\{v_i\}\cup I_i$ and the colors $1$ and $2$ to the vertices in $C_{1,3}$ and $C_{2,3}$ respectively. Finally, for the vertices in $C_{1,2}$, if $a\in C_{1,2}$ such that $N_B (a)\subseteq(\{v_3\}\cup I_3)$ then color $a$ with $4$ or else color it with $3$.
Clearly, this is a proper coloring of $G$ and thus $\chi(G)\leq\omega(G)$.
\end{proof}

%
In \cite{karthick2018chromatic}, T. Karthick and S. Mishra  have shown that if $G$ is a $\{2K_2, K_5-e\}$-free graph, then $\chi(G)\leq \omega(G) + 4$.
In Theorem \ref{2k2k5-e}, we have improved the bound for $\omega(G)=4$ and $5$.
Also C. Brause et al. in \cite{brause2019chromatic} have proved that if $G$ is a $\{2K_2, K_5-e\}$-free graph with $\omega(G)\geq6$, then $G$ is a perfect. 
For $\omega\geq6$, even though Theorem \ref{2k2k5-e} is a simple consequence of what C. Brause et al. have proved, we have provided an alternative proof which again is much simpler to the one given in \cite{brause2019chromatic}.
\begin{theorem}\label{2k2k5-e}
If $G$ is a $\{2K_2, K_5-e\}$-free graph with $\omega(G)\geq 4$, then 
$\chi(G)\leq \left\{
\begin{array}{lcl}
6 					& \mbox{for} & \omega(G)= 4	\\ 
\omega(G)  	& \mbox{for} & \omega(G)\geq 5.
\end{array}
\right.$
\end{theorem}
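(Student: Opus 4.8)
The plan is to work with the Wagon--Bharathi partition $V(G)=A\cup\left(\bigcup_{i=1}^{\omega}I_i\right)\cup\left(\bigcup_{(i,j)\in L}C_{i,j}\right)$, where $A=\{v_1,\dots,v_\omega\}$ is a maximum clique and, since $G$ is $2K_2$-free, every $C_{i,j}$ is independent. The linchpin is a one-line observation that eliminates the sets $I_l$: if $b\in I_l$ then $b$ is adjacent to all of $A\backslash\{v_l\}$ and non-adjacent to $v_l$, so whenever $\omega\geq 4$ we may pick distinct $v_p,v_q,v_r\in A\backslash\{v_l\}$, and then $\langle\{b,v_l,v_p,v_q,v_r\}\rangle\cong K_5-e$ (the only missing edge being $bv_l$), a contradiction. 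Hence $I_l=\emptyset$ for every $l$ and $V(G)=A\cup\bigcup_{(i,j)\in L}C_{i,j}$; this is exactly what should make the argument shorter than the one in \cite{brause2019chromatic}.

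Next I would locate the surviving $C_{i,j}$. A vertex $a\in C_{i,j}$ is adjacent to every $v_k$ with $k<j$, $k\neq i$ and non-adjacent to $v_i,v_j$. Applying the same $K_5-e$ pattern to the non-adjacent pair $\{a,v_j\}$ together with a triangle taken from these forced neighbours (and from any extra neighbour $v_l$ with $l>j$) shows that $\langle\{a,v_j,v_p,v_q,v_r\}\rangle\cong K_5-e$ as soon as $a$ has three common neighbours with $v_j$ inside $A$. This forces $C_{i,j}=\emptyset$ for $j\geq 5$; gives $N_A(C_{1,4})=\{v_2,v_3\}$, $N_A(C_{2,4})=\{v_1,v_3\}$, $N_A(C_{3,4})=\{v_1,v_2\}$; shows that a vertex of $C_{1,3}$ (resp.\ $C_{2,3}$) is adjacent to $v_2$ (resp.\ $v_1$) and to at most one further clique vertex; and shows that a vertex of $C_{1,2}$ is adjacent to at most two vertices of $A\backslash\{v_1,v_2\}$. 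So for $\omega\geq 5$ only $C_{1,2},C_{1,3},C_{2,3},C_{1,4},C_{2,4},C_{3,4}$ remain.

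I would then control edges between distinct $C$-sets by $2K_2$-freeness: if $a\in C_{i,j}$, $c\in C_{i',j'}$ with $ac\in E(G)$, then for any edge $v_pv_q$ of $A$ with $v_p,v_q\notin N_A(a)\cup N_A(c)$ we get $\langle\{a,c,v_p,v_q\}\rangle\cong 2K_2$. Since each relevant $N_A$ has size at most $2$, the set $A\backslash(N_A(a)\cup N_A(c))$ has at least $\omega-4$ vertices, so every cross-edge disappears when $\omega\geq 6$ and only a handful of disjoint-neighbourhood pairs can survive when $\omega=5$. With the $I_l$ empty and the cross-edges thus pinned down, colouring $v_i\mapsto i$ and giving each remaining $C_{i,j}$ a colour avoiding the indices of $N_A(C_{i,j})$ produces a proper $\omega$-colouring; in the single delicate case $\omega=5$ the few possible conflicts are broken by splitting an offending $C_{i,j}$ between its two admissible colours. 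This yields $\chi(G)\leq\omega(G)$ for $\omega\geq 5$.

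The case $\omega=4$ is where the slack appears and is, I expect, the main obstacle. Here $A$ is too small for the $2K_2$-argument to empty the cross-edges, and the vertices of $C_{1,2},C_{1,3},C_{2,3}$ may genuinely carry the extra adjacency to $v_3$ and $v_4$; these are precisely the vertices that cannot be folded back into the four clique colours. The plan is to colour $A$ with $\{1,2,3,4\}$, place each $C_{i,j}$ in an admissible clique colour whenever the $N_A$-analysis allows, and absorb the remaining conflicting vertices into two fresh colours $5$ and $6$, after checking via $2K_2$-freeness that these leftover vertices form a bounded, easily coloured configuration. This gives $\chi(G)\leq 6$ when $\omega(G)=4$ and completes the proof.
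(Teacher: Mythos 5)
Your structural analysis is sound and coincides with the paper's: the same Wagon partition, the same one-line argument that every $I_l$ is empty when $\omega\geq 4$, that $C_{i,j}=\emptyset$ for $j\geq 5$, and that $N_A(C_{i,4})=\{v_1,v_2,v_3\}\backslash\{v_i\}$; your cross-edge bound via $|N_A(a)\cup N_A(c)|\leq 4$ is also correct and does eliminate all edges between distinct $C$-sets once $\omega\geq 6$. The gap is that in the two remaining cases --- which are exactly the cases your argument defers --- you never actually exhibit a coloring. For $\omega=5$ you assert that the conflicts can be "broken by splitting an offending $C_{i,j}$ between its two admissible colours" without verifying that a consistent assignment exists, and for $\omega=4$ you only promise that the leftover vertices "form a bounded, easily coloured configuration"; that is precisely the statement to be proved, not a proof. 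You also misjudge the difficulty: $\omega=4$ is not the main obstacle, it is the trivial case.

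The observation you are missing, and which is how the paper closes both cases at once, is that each $C_{i,j}$ is an independent set (any edge inside it would give a $2K_2$ with $v_iv_j$) all of whose vertices are non-adjacent to $v_i$ and $v_j$, so $\{v_i\}\cup C_{i,j}$ is itself independent; edges between distinct $C$-sets are then irrelevant so long as distinct $C$-sets receive distinct colors. For $\omega=4$ only the six sets $C_{1,2},C_{1,3},C_{2,3},C_{1,4},C_{2,4},C_{3,4}$ survive, so $\{v_1\}\cup C_{1,2}$, $\{v_2\}\cup C_{2,3}$, $\{v_3\}\cup C_{1,3}$, $\{v_4\}\cup C_{1,4}$, $C_{2,4}$, $C_{3,4}$ is already a partition into six independent sets --- no cross-edge analysis, no absorption, no check on the shape of leftovers. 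For $\omega\geq 5$ the same pairing handles $C_{1,2},C_{2,3},C_{1,3}$ with colors $1,2,3$, and the only point needing an argument is that $C_{1,4}\cup C_{2,4}\cup C_{3,4}$ can share color $4$ with $v_4$: if $a\in C_{i,4}$ and $b\in C_{k,4}$ were adjacent, then since $N_A(C_{i,4})\subseteq\{v_1,v_2,v_3\}$ both $a$ and $b$ miss $v_4$ and $v_5$, and $\{a,b,v_4,v_5\}$ induces a $2K_2$ (this is the one place $\omega\geq 5$ enters). With that single line, your "splitting" at $\omega=5$ and your "absorbing" at $\omega=4$ both become unnecessary and the proof is complete.
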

\begin{proof}
Let $G$ be a $\{2K_2, K_5-e\}$-free graph with $\omega(G)\geq4$.
Let us start by observing that, for $i\in\{1,2,\ldots,\omega\}$, $I_i=\emptyset$.
Suppose there exists an $i\in\{1,2,\ldots,\omega\}$ such that $I_i\neq\emptyset$, say $a\in I_i$, then we can find $p,q,r\in\{1,2\ldots,\omega\}\backslash \{i\}$ such that $\langle \{a,v_p,v_q,v_r,v_i\}\rangle\cong K_5-e$, a contradiction.
For $\omega=4$, let us partition the vertices into six independent sets, namely,
$V_1=v_1\cup  C_{1,2}$, $V_2=v_2\cup  C_{2,3}$, $V_3=v_3\cup  C_{1,3}$, $V_4=v_4\cup  C_{1,4}$, $V_5=C_{2,4}$ and $V_6=C_{3,4}$.
Hence $\chi(G)=6$, when $\omega=4$.
Let us next consider $\omega\geq5$.
Without much difficulty, we can see that $C_{i,j}=\emptyset$, for any $j\geq5$ 
(Otherwise, for some $j\geq5$, there exists a vertex $a\in C_{i,j}$ and we can find $p,q,r\in\{1,2,\ldots,j\}\backslash\{i,j\}$ such that $\langle \{a,v_p,v_q,v_r,v_i\}\rangle\cong K_5-e$, a contradiction).

Thus $V(G)=A\cup C_{1,2}\cup C_{1,3}\cup C_{2,3}\cup C_{1,4}\cup C_{2,4}\cup C_{3,4}$.
For $1\leq i\leq3$, we shall show that  $N_A(C_{i,4})=\{v_1,v_2,v_3\}\backslash\{v_i\}$. Suppose there exists an integer $j\geq5$ and some vertex $a\in C_{i,4}$ such that $av_j\in E(G)$,
then $\langle \{a,v_1,v_2,v_3,v_j\}\rangle\cong K_5-e$, a contradiction.
Moreover, $\cup_{i=1}^3 C_{i,4}$ is independent.
On the contrary, if there exist vertices $a\in C_{i,4}$, $b\in C_{k,4}$ (for some $1\leq i,k\leq3$) such that $ab\in E(G)$,
then $\langle \{a,b,v_4,v_5\}\rangle\cong 2K_2$, a contradiction.
Now, let us partition the graph into $\omega$ independent sets in the following way.
$V_1=v_1\cup  C_{1,2}$, $V_2=v_2\cup  C_{2,3}$, $V_3=v_3\cup  C_{1,3}$, $V_4=v_4\cup C_{1,4}\cup C_{2,4}\cup C_{3,4}$, and for $5\leq i\leq\omega$,  $V_i=\{v_i\}$. Hence $\chi(G)=\omega(G)$.
\end{proof}
In \cite{karthick2018chromatic}, T. Karthick and S. Mishra  have showed that if $G$ is a $\{2K_2, K_1 + C_4\}$-free graph, then $\chi(G)\leq \omega(G) + 5$.
In Theorem \ref{2k2k1+c4}, we improve the bound to $\omega(G)+1$.
\begin{theorem}\label{2k2k1+c4}
If $G$ is a $\{2K_2, K_1 + C_4\}$-free graph with $\omega(G)\geq 3$, then $\chi(G) \leq \omega(G)+1$.
\end{theorem}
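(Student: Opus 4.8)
The plan is to colour the maximum clique $A=\{v_1,\dots,v_\omega\}$ with the colours $1,\dots,\omega$ and then to fit every vertex outside $A$ either into one of these colours or into a single spare colour $\omega+1$. First I would dispose of the base case: when $\omega=3$ the bound $\omega+1=4$ is exactly Theorem \ref{2k2k4}, so I may assume $\omega\ge 4$. For a vertex $u\notin A$ write $M(u)=\{k:uv_k\notin E(G)\}$ for the set of clique-indices missed by $u$; since $A$ is maximum, $M(u)\neq\emptyset$, and in this language $I_i=\{u:M(u)=\{i\}\}$ while $C_{i,j}$ consists of the vertices whose two smallest missed indices are $i<j$. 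Each $I_i$ and (by $2K_2$-freeness) each $C_{i,j}$ is independent. The colouring I aim for assigns to $u$ a colour in $M(u)$ whenever possible (this is automatically compatible with the clique, since $u\not\sim v_{c(u)}$) and uses $\omega+1$ only as a last resort.

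The engine of the argument is two adjacency lemmas for an edge $uw$ with $u,w\notin A$. From $2K_2$-freeness, applied to $uw$ against a clique edge $v_sv_t$ with $s,t\in M(u)\cap M(w)$, I get $|M(u)\cap M(w)|\le 1$. From $(K_1+C_4)$-freeness I get the dichotomy that $M(u),M(w)$ are either comparable or satisfy $M(u)\cup M(w)=\{1,\dots,\omega\}$: indeed if $s\in M(u)\setminus M(w)$ and $t\in M(w)\setminus M(u)$ then $u,w,v_s,v_t$ induce a $C_4$, so any $v_p$ with $p\notin M(u)\cup M(w)$ would be an apex giving $K_1+C_4$. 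Two consequences drive the colouring: for $m\ge 2$ the set $S_m$ of vertices with smallest missed index $m$ induces a bipartite graph with parts $I_m$ and $\bigcup_{j}C_{m,j}$ (the covering alternative is impossible because every such $M$ avoids $1,\dots,m-1$, and comparability together with $|M\cap M|\le1$ forces the smaller set to be a singleton, i.e.\ an $I_m$-vertex); and, for $\omega\ge4$, a vertex $u\in C_{i,j}$ with $M(u)=\{i,j\}$ can be adjacent to $I_\ell$ only for $\ell\in\{i,j\}$.

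With these in hand I would build the colouring greedily: put $\{v_i\}\cup I_i$ into colour $i$; for $u\in C_{i,j}$ use colour $j$ if $u$ has no neighbour in $I_j$, colour $i$ if it has none in $I_i$, and the spare colour $\omega+1$ only when $u$ is attached to both $I_i$ and $I_j$. The comparable/covering dichotomy is what controls the residual collisions between two different $C$-sets that receive the same colour.

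The main obstacle is exactly the last point: I must show that the vertices forced onto colour $\omega+1$ form an independent set, and that no two $C$-vertices sharing a colour in $\{1,\dots,\omega\}$ are adjacent. This is where the full strength of $(K_1+C_4)$-freeness is needed, rather than the mere bookkeeping of the greedy rule: for two adjacent, doubly-attached vertices $u\sim u'$ one has to locate a common neighbour whose neighbourhood contains an induced $C_4$ (assembled from two of the relevant $I$-vertices together with two clique vertices), producing a forbidden $K_1+C_4$ and thereby ruling the configuration out. Carrying out this collision analysis cleanly is the crux; by contrast the reduction $\omega=3$ to Theorem \ref{2k2k4} and the independence of the sets $I_i$ and $C_{i,j}$ are routine.
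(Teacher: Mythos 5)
Your plan correctly reduces the case $\omega=3$ to Theorem \ref{2k2k4}, and your two adjacency lemmas ($|M(u)\cap M(w)|\le 1$ by $2K_2$-freeness, and the comparable-or-covering dichotomy by $(K_1+C_4)$-freeness) are both valid. But the step you defer as ``the crux'' --- that no two $C$-vertices sharing a colour in $\{1,\dots,\omega\}$ are adjacent --- is not a missing detail: for your colouring rule it is \emph{false}, so no collision analysis can complete the argument. Concretely, let $\omega=4$, $A=\{v_1,v_2,v_3,v_4\}$, let $u$ be adjacent (in $A$) to $v_2$ only, let $u'$ be adjacent to $v_1$ and $v_4$ only, let $uu'\in E(G)$, and let $V(G)$ consist of just these six vertices. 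This graph is $2K_2$-free (a short check of the few disjoint pairs of edges), and it is $(K_1+C_4)$-free: every induced $C_4$ must contain $u$ (inside $A\cup\{u'\}$ the two $A$-neighbours of $u'$ are adjacent, so no induced $C_4$ exists there), hence an apex would have to be a neighbour of $u$, but $N(u)=\{v_2,u'\}$ and both lie on any such cycle. Here $M(u)=\{1,3,4\}$, so $u\in C_{1,3}$; $M(u')=\{2,3\}$, so $u'\in C_{2,3}$; and every $I_\ell=\emptyset$. Your rule's first branch fires for both vertices (neither has a neighbour in $I_3$) and assigns both the colour $3$, although $uu'\in E(G)$. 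Note your dichotomy lemma is perfectly consistent with this edge: $M(u)\cap M(u')=\{3\}$ and $M(u)\cup M(u')=\{1,2,3,4\}$, i.e.\ the covering alternative holds. The defect is structural: $C_{1,3}$ and $C_{2,3}$ simply cannot be allowed to share a colour.

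The paper's proof avoids this by making two choices opposite to yours. First, instead of placing $I_i$ into colour $i$, it shows that $[I_i,I_j]=\emptyset$ for all $i,j$ when $\omega\ge 3$ (an edge $ab$ with $a\in I_i$, $b\in I_j$ creates the induced $C_4$ on $\{a,v_j,v_i,b\}$, dominated by any $v_s$ with $s\notin\{i,j\}$), so the \emph{entire} union $\bigcup_{i=1}^{\omega} I_i$ is independent and absorbs the single extra colour $\omega+1$; this removes all $C$-versus-$I$ conflicts and with them any need for a spare-colour collision analysis. Second, for $j\ge 4$ the same apex argument (apex $v_s$ with $s\in\{1,\dots,j\}\setminus\{i,k,j\}$, which exists precisely because $j\ge 4$) gives $[C_{i,j},C_{k,j}]=\emptyset$, so $\bigcup_{i<j}C_{i,j}$ is independent and takes colour $j$; the three remaining sets $C_{1,2}$, $C_{1,3}$, $C_{2,3}$ are each independent and are given the three \emph{distinct} colours $1$, $3$, $2$ --- exactly the separation your rule lacks. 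If you want to salvage your write-up, keep your lemmas but change the assignment: colour $\bigcup_i I_i$ with $\omega+1$, colour $\bigcup_{i<j}C_{i,j}$ with $j$ for $j\ge 4$, and give $C_{1,2}$, $C_{1,3}$, $C_{2,3}$ pairwise different clique colours; no induction on $\omega$ or appeal to Theorem \ref{2k2k4} is then needed.
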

\begin{proof}
Let $G$ be a $\{2K_2, K_1 + C_4\}$-free graph with $\omega(G)\geq 3$ and $V(G)=A\cup\left(\bigcup\limits_{i=1}^{\omega}I_i\right)\cup\left(\bigcup\limits_{(i,j)\in L}C_{i,j}\right)$.
Let  $\{1,2,\ldots,\omega(G)+1\}$ be the set of colors.
For $1\leq i\leq \omega$, assign the color $i$ to the vertex $v_i$.
For $1\leq i,j\leq \omega$, one can easily observe that  $[I_i, I_j]=\emptyset$.
If not, for some $i,j\in\{1,2,\ldots,\omega\}$, if there exist vertices $a\in I_i$ and $b\in I_j$ such that $ab\in E(G)$,
then we can find an $s\neq \{i,j\}$ such that $\langle\{v_s,a,v_j,v_i,b\}\rangle\cong K_1 + C_4$, a contradiction.
Hence $\langle\cup_{i=1}^\omega I_i\rangle$ is independent and can be given a single color. Let it be  $\omega+1$.
Next, for $j\geq 4$, we shall show that $[C_{i,j}, C_{k,j}]=\emptyset$.
Suppose there exist vertices $a\in C_{i,j}$ and $b\in C_{k,j}$ such that $ab\in E(G)$, then we can find $s\in \{1,2,\ldots,j\}\backslash\{i,k,j\}$ such that $\langle \{v_s,a,v_k,v_i,b\}\rangle\cong K_1 + C_4$, a contradiction.
Hence for all $j\geq 4 $, $\langle\cup_{i=1}^{j-1} C_{i,j}\rangle$ is independent and $[v_j,\cup_{i=1}^{j-1} C_{i,j}]=\emptyset$.
Thus for any $j\geq4$, we can assign the color $j$ to all the vertices in$\cup_{i=1}^{j-1}C_{i,j}$.
Also the colors $1,3$ and $2$ can be given to the vertices in $C_{1,2}$, $C_{1,3}$ and $C_{2,3}$ respectively.
Hence $G$ is $(\omega+1)$-colorable.
%
\end{proof}
Let us recall a graph transformation defined by J. Mycielski \cite{mycielski1955coloriage} as follows. 
For a graph $G=(V,E)$, the Mycielskian of $G$, denoted by $\mu(G)$, is the graph with vertex set $V(\mu(G))=V\cup V'\cup \{u\}$ where $V'=\{x':x\in V\}$ and the edge set $E(\ma)=E\cup\{xy':xy\in E\}\cup\{y'u:y'\in V'\}$. 
Without much difficulty, it can be seen that $\mu(K_\omega)$ will be $\{2K_2, K_1 + C_4\}$-free with chromatic number equal to $\omega+1$
and hence the bound given in Theorem \ref{2k2k1+c4} is tight.
As a consequence of Theorem \ref{2k2k1+c4}, we will get Corollary \ref{2k2c4} which was proved by Z. Bl\'{a}zsik et al. in \cite{blazsik1993graphs}.
\begin{corollary}\label{2k2c4}\cite{blazsik1993graphs}
If $G$ is a $\{2K_2, C_4\}$-free graph, then $\chi(G)\leq \omega(G)+1$.
\end{corollary}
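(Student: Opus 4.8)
The plan is to deduce Corollary \ref{2k2c4} directly from Theorem \ref{2k2k1+c4} by exploiting a containment between the two forbidden subgraphs, and then to patch up the few clique sizes for which the hypothesis of that theorem does not apply.

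First I would observe the key inclusion $C_4\sqsubseteq K_1+C_4$: deleting the apex vertex of the join leaves exactly the four vertices of the $C_4$, which induce a $4$-cycle. Hence any graph containing an induced $K_1+C_4$ also contains an induced $C_4$, so every $C_4$-free graph is automatically $(K_1+C_4)$-free. In particular, every $\{2K_2,C_4\}$-free graph is $\{2K_2,K_1+C_4\}$-free, and the hypothesis of Theorem \ref{2k2k1+c4} is met (apart from the clique-size restriction).

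Next, for $\omega(G)\geq 3$ I would simply invoke Theorem \ref{2k2k1+c4}: since $G$ is $\{2K_2,K_1+C_4\}$-free by the observation above, we get $\chi(G)\leq\omega(G)+1$, which is exactly the claimed bound. It then remains to dispose of the cases $\omega(G)\leq 2$, which lie outside the scope of Theorem \ref{2k2k1+c4}. When $\omega(G)=1$ the graph is edgeless, so $\chi(G)=1\leq\omega(G)+1$. When $\omega(G)=2$, Wagon's bound (Theorem \ref{2k2}) already yields $\chi(G)\leq\binom{\omega(G)+1}{2}=\binom{3}{2}=3=\omega(G)+1$, so $C_4$-freeness is not even needed in this case.

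The argument is essentially immediate once the containment $C_4\sqsubseteq K_1+C_4$ is noted; the only point requiring attention is the boundary, namely handling the small-clique cases $\omega(G)\leq 2$ separately because Theorem \ref{2k2k1+c4} is stated only for $\omega(G)\geq 3$. I do not anticipate any genuine obstacle here, as both boundary cases are covered by elementary observations and the general Wagon bound.
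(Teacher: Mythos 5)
Your proof is correct and takes essentially the same route as the paper, which presents this corollary as an immediate consequence of Theorem \ref{2k2k1+c4} via exactly the containment you note, namely that a $C_4$-free graph is automatically $(K_1+C_4)$-free. You are in fact slightly more careful than the paper: it leaves the boundary cases $\omega(G)\leq 2$ implicit, whereas you dispose of them cleanly using Theorem \ref{2k2}.
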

\section{$\chi$-binding functions for some $pK_2$-free graphs}\label{sectpk2}
In Section \ref{sectpk2}, let us find $\chi$-binding function for some classes of $pK_2$-free graphs. 
Let us begin by recalling a result due to S. Brandt in \cite{brandt2002triangle}.
\begin{theorem}[\cite{brandt2002triangle}]\label{pk2}
For $p\geq3$, if $G$ is a $pK_2$-free graph with $\omega(G)=2$, then $\chi(G)\leq 2p-2$.
\end{theorem}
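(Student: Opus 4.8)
The plan is to prove the statement by induction on $p$, with $p=3$ as the base case and the inductive step handled by a neighborhood-removal reduction. The key standing observation is that $\omega(G)=2$ forces $G$ to be triangle-free, so for every vertex $x$ the neighborhood $N(x)$ is an independent set; moreover $pK_2$-freeness means that the largest induced matching in $G$ has at most $p-1$ edges. These two facts are exactly what make the reduction balance out.

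For the inductive step ($p\ge 4$), I would fix any edge $ab\in E(G)$ (if $G$ is edgeless then $\chi(G)\le 1$ and we are done) and set $G'=G-(N(a)\cup N(b))$; since $a\in N(b)$ and $b\in N(a)$, both $a$ and $b$ are deleted. First I would show that $G'$ is $(p-1)K_2$-free: any induced matching $M'$ of $G'$ uses only vertices outside $N(a)\cup N(b)$, hence non-adjacent to both $a$ and $b$, so $M'\cup\{ab\}$ is again an induced matching of $G$; were $|M'|=p-1$, this would produce an induced $pK_2$, a contradiction. By the inductive hypothesis, $\chi(G')\le 2(p-1)-2=2p-4$. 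Now $V(G)\setminus V(G')=N(a)\cup N(b)$, and by triangle-freeness both $N(a)$ and $N(b)$ are independent; colouring $G'$ with the $2p-4$ colours supplied by the hypothesis and then using two fresh colours on $N(a)$ and on $N(b)\setminus N(a)$ yields $\chi(G)\le (2p-4)+2=2p-2$. Note that the step is exactly balanced: each reduction lowers the target bound by $2$ while spending $2$ extra colours, so it transmits the bound faithfully for all $p\ge 4$.

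The genuine obstacle is the base case $p=3$, where a triangle-free, $3K_2$-free graph must be shown to satisfy $\chi(G)\le 4$. Here the reduction above is too wasteful: applied once it only gives $\chi(G)\le 3+2=5$, because the sharp bound for triangle-free $2K_2$-free graphs is $3$ (which itself follows from Theorem \ref{2k2} with $\omega\le 2$, giving $\binom{3}{2}=3$) rather than $2$. For the base case I would instead argue directly from a maximum induced matching $\{a_1b_1,a_2b_2\}$, setting $W=\{a_1,b_1,a_2,b_2\}$. The four independent sets $N(a_1),N(b_1),N(a_2),N(b_2)$ cover $W\cup N(W)$, and the set $R$ of vertices with no neighbour in $W$ is independent, for otherwise an edge inside $R$ would extend the matching to an induced $3K_2$. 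This naive count produces five colour classes, so the crux is to save exactly one colour, and since the generic reduction already overshoots there is no slack to waste. To save that colour I would exploit triangle-freeness — each vertex meets at most one endpoint of each matching edge — together with the fact that every neighbour of a vertex of $R$ lies in $N(W)$, in order to merge $R$ into one of the four neighbourhood classes (or recolour so that only four classes survive). This colour-saving merge in the base case is where the real difficulty lies; the inductive machinery above is then routine.
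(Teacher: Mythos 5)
First, a point of comparison: the paper does not prove this statement at all --- Theorem \ref{pk2} is imported from Brandt \cite{brandt2002triangle} and used as a black box --- so there is no internal proof to measure you against, and your attempt must stand on its own. Its inductive step does stand: for an edge $ab$, deleting $N(a)\cup N(b)$ leaves a triangle-free $(p-1)K_2$-free graph (any induced matching $M'$ avoiding $N(a)\cup N(b)$ extends to the induced matching $M'\cup\{ab\}$ of $G$), and since $N(a)$ and $N(b)$ are disjoint independent sets (disjoint because a common neighbour of $a$ and $b$ would form a triangle), two fresh colours finish the deleted part. This correctly reduces the theorem, for all $p\ge 4$, to the case $p=3$.

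But the case $p=3$ --- every $\{3K_2,K_3\}$-free graph is $4$-colourable --- is exactly what you do not prove, and you concede as much (``this colour-saving merge \ldots is where the real difficulty lies''). The gap is genuine, not a routine omission, because the merge you hope for is impossible in general. Consider the Gr\"otzsch graph $\mu(C_5)$, the Mycielskian (as defined in the paper just before Corollary \ref{2k2c4}) of the cycle $x_1x_2x_3x_4x_5$, with shadow vertices $x_i'$ and hub $u$: it is triangle-free, $3K_2$-free, and has chromatic number $4$, so it is a tight instance of your base case. The set $\{x_1'u,\,x_3x_4\}$ is a maximum induced matching, giving $W=\{x_1',u,x_3,x_4\}$ and $R=\{x_1\}$. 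The neighbours of $x_1$ are $x_2,x_5,x_2',x_5'$, and every one of the four classes meets this set: $N(x_1')\ni x_2$, $N(u)\ni x_2',x_5'$, $N(x_3)\ni x_2,x_2'$, $N(x_4)\ni x_5,x_5'$. Hence $R$ cannot be absorbed into any of the four neighbourhood classes, and the parenthetical fallback ``or recolour so that only four classes survive'' is a restatement of the goal rather than an argument. A correct proof of the base case must build a $4$-colouring by an essentially different mechanism --- this is the actual content of Brandt's theorem, already of Gr\"otzsch-type difficulty at $p=3$. As written, your proposal establishes only that Brandt's theorem follows from its own hardest special case.
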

For $p\geq 3$, let us consider $pK_2$-free graphs with $\omega(G)=3$.
\begin{theorem}\label{pk2omega3}
For $p\geq3$, if $G$ is a $pK_2$-free graph with $\omega(G)=3$, then $\chi(G)\leq 2p^2-3p+4$.
\end{theorem}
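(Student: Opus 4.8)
The plan is to run the Wagon--Bharathi partition with a maximum clique $A=\{v_1,v_2,v_3\}$, writing $V(G)=A\cup I_1\cup I_2\cup I_3\cup C_{1,2}\cup C_{1,3}\cup C_{2,3}$, and then to colour the pieces while reusing palettes aggressively. First I would record the cheap structural facts: each $\{v_i\}\cup I_i$ is independent (costing one colour), and the clique bound $\omega(\langle C_{i,j}\rangle)\le \omega-j+2$ gives $\omega(\langle C_{1,3}\rangle),\omega(\langle C_{2,3}\rangle)\le 2$ while $\omega(\langle C_{1,2}\rangle)\le 3$. Moreover $C_{1,3}\subseteq N(v_2)$ and $C_{2,3}\subseteq N(v_1)$, so both pieces are triangle-free; since they are also $pK_2$-free, Theorem~\ref{pk2} colours each with at most $2p-2$ colours.

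The genuinely new piece is $C_{1,2}$, and I would split it along $v_3$ into $D=C_{1,2}\cap N(v_3)$ and $E=C_{1,2}\setminus N(v_3)$. Because $D\subseteq N(v_3)$ it is triangle-free, so Theorem~\ref{pk2} again gives $\chi(\langle D\rangle)\le 2p-2$. The set $E$ consists of vertices non-adjacent to all of $A$, and the key observation is that an induced matching of size $p-1$ inside $E$ together with the edge $v_1v_2$ (whose endpoints have no neighbour in $E$) would form an induced $pK_2$. Hence $\langle E\rangle$ is $(p-1)K_2$-free with $\omega\le 3$, and I would bound $\chi(\langle E\rangle)$ by induction on $p$, taking as base $p=3$, where $E$ is $2K_2$-free and Theorem~\ref{2k2k4} applies.

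Finally I would assemble a colouring of $G$. Summing the pieces naively is far too wasteful, so the real content is colour reuse: the three triangle-free pieces $C_{1,3},C_{2,3},D$ meet $A$ in a single vertex each ($v_2,v_1,v_3$ respectively) and $E$ meets $A$ not at all, which should let their palettes be folded into the colours used on the classes $\{v_i\}\cup I_i$ and into one another wherever the corresponding parts are non-adjacent. The target is the recursion $\chi(G)\le \chi(\langle E\rangle)+(4p-5)$, which together with the base value telescopes to exactly $2p^2-3p+4$. The hard part will be precisely this bookkeeping: using only the $pK_2$-free hypothesis, and without a second forbidden subgraph to force clean joins as in Section~\ref{sect2k2}, I must decide which pairs among $C_{1,2},C_{1,3},C_{2,3}$ and the $I_i$ are mutually non-adjacent so that their colour classes may be identified, and then verify that the number of genuinely fresh colours needed beyond those of $\langle E\rangle$ is only $4p-5$.
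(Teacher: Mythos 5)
Your structural observations are all correct, and your key matching-extension idea (an induced matching inside a set avoiding both endpoints of $v_1v_2$ extends by that edge to a larger induced matching) is exactly the engine of the paper's proof. The gap is in the assembly, and it is not mere bookkeeping: the colour reuse you are counting on is unavailable. By splitting $C_{1,2}$ into $D=C_{1,2}\cap N(v_3)$ and $E=C_{1,2}\setminus N(v_3)$ and running the induction only on $E$, you must pay for $D$ separately. Even with the sharpest piecewise bounds (in fact $C_{1,3}$, $C_{2,3}$ and $D$ are each $(p-1)K_2$-free, hence $(2p-4)$-colourable for $p\geq4$ by Theorem \ref{pk2}, better than the $2p-2$ you quote), disjoint palettes give $\chi(G)\leq 3+3(2p-4)+\chi(\langle E\rangle)=\chi(\langle E\rangle)+6p-9$, which overshoots your target $\chi(\langle E\rangle)+4p-5$ by $2p-4$; at $p=3$ the count is $3+3\cdot 3+4=16>13$. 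The folding needed to recover this deficit does not exist: with only the $pK_2$-freeness hypothesis, none of $[C_{1,3},C_{2,3}]$, $[D,C_{1,3}]$, $[D,C_{2,3}]$, $[C_{i,j},I_k]$ is forced to be empty. For a small witness, take a triangle $v_1v_2v_3$ together with adjacent vertices $a,b$ where $a$ sees only $v_2$ and $b$ sees only $v_1$: this graph is $3K_2$-free with $\omega=3$, yet it has an edge from $C_{1,3}$ to $C_{2,3}$. Forcing such pairs to be non-adjacent is precisely what the second forbidden subgraph buys in Section \ref{sect2k2}, and it is what you lack here; carried through the induction, the recursion you can actually justify with disjoint palettes telescopes to $3p^2-6p+7$, which exceeds $2p^2-3p+4$ for every $p\geq3$.

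The fix is to undo the split, which is unnecessary: your extension argument needs only that the endpoints $v_1,v_2$ of the added edge have no neighbours in the set being considered, and every vertex of $C_{1,2}$ is non-adjacent to both $v_1$ and $v_2$ by definition, whether or not it sees $v_3$. So the whole of $C_{1,2}$, not just $E$, is $(p-1)K_2$-free with $\omega(\langle C_{1,2}\rangle)\leq3$, and the induction hypothesis applies to it directly. That is the paper's proof: give colour $i$ to $\{v_i\}\cup I_i$ for $i=1,2,3$; give $C_{1,3}$ and $C_{2,3}$ disjoint fresh palettes of $2(p-1)-2$ colours each (they are $(p-1)K_2$-free with clique number at most $2$); and colour $C_{1,2}$ with $2(p-1)^2-3(p-1)+4$ fresh colours by induction, the base case $p=3$ coming from Theorems \ref{2k2} and \ref{2k2k4}. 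The plain sum is $3+2(2p-4)+2(p-1)^2-3(p-1)+4=2p^2-3p+4$, with no colour reuse needed anywhere.
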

\begin{proof}
Let $p\geq 3$ and $G$ be a $pK_2$-free graph with $\omega(G)=3$. Then $V(G)= A\cup\left(\bigcup\limits_{i=1}^{3}I_i\right)\cup(C_{1,2}\cup C_{1,3}\cup C_{2,3})$ and each of $C_{1,2}$, $C_{1,3}$ and $C_{2,3}$ are $(p-1)K_2$-free.
Also, by the definition of $C_{i,j}$, $\omega(C_{1,2})\leq 3$, $\omega(C_{1,3})\leq 2$ and $\omega(C_{2,3})\leq 2$.
Let us prove the result by induction on $p\geq 3$.

Let us first give the color $i$ to all the vertices in $\{v_i\}\cup I_i$, where $i\in\{1,2,3\}$.
For $p=3$, by using Theorem \ref{2k2} and Theorem \ref{2k2k4}, $C_{1,3}$ and $C_{2,3}$ can be colored with $3$ colors each and $C_{1,2}$ can be colored with $4$ colors.
Therefore, on the whole $G$ can be colored with at most $3+2\times3+4=13$ colors.
For $s\geq3$,  let us assume that the result is true for any $sK_2$-free graph with $\omega=3$.

Let us consider $G'$ to be an $(s+1)K_2$-free graph with $\omega(G')=3$.
By using Theorem \ref{pk2}, $C_{1,3}$ and $C_{2,3}$ can be colored with $2s-2$ colors each and by our assumption $C_{1,2}$ can be colored with at most $2s^2-3s+4$ colors.
Therefore, altogether $G'$ can be colored with $3+2(2s-2)+2s^2-3s+4= 2(s+1)^2-3(s+1)+4$ colors.
%
%
%
\end{proof}
The strategy of the proofs in Section \ref{sectpk2} are all the same. 
So instead of repeating the common steps for each proof, we have mentioned 2 strategies which will include the portion of the proof that will be common.
 
\noindent\textbf{Strategy 1}

Let $G$ be a $\{pK_2,H\}$-free graph where $H$ is some graph. 
While considering the case when $\omega=1$, the result will be trivial and for the case when $\omega=2$, the results will follow from Theorem \ref{2k2} and Theorem \ref{pk2}.
Now for $\omega\geq 3$, we shall always prove the result by induction on $p$.
For $p=2$, we shall show by using the result corresponding to the case when $p=2$ from Section \ref{sect2k2}. 
For $s\geq 2$, we shall always  assume that the result is true for any $\{sK_2, H\}$-free graph and prove that the result is true for $\{(s+1)K_2, H\}$-free graph. 

\noindent\textbf{Strategy 2}

Let $G$ be a $\{pK_2,H\}$-free graph where $H$ is some graph. 
While considering the case when $\omega=1$, the result will be trivial and for the case when $\omega=2$,  the results will follow from Theorem \ref{2k2} and Theorem \ref{pk2} and for the case when $\omega=3$, the results will follow from Theorem \ref{2k2k4} and Theorem \ref{pk2omega3}.
Now for $\omega\geq 4$, we shall prove the result by induction on $p$.
For $p=2$, we shall show by using the result corresponding to the case when $p=2$ from Section \ref{sect2k2}. 
For $s\geq 2$, we shall  assume that the result is true for any $\{sK_2, H\}$-free graph and prove that the result is true for $\{(s+1)K_2, H\}$-free graph. 

So, in all the proofs of the Theorems in Subsections \ref{subsectlinear} and \ref{subsecthigher}, we shall apply either Strategy 1 or Strategy 2 and only show that the result is true for $\{(s+1)K_2, H\}$-free graph.


Let us now divide Section \ref{sectpk2} into subsections depending upon the order of the $\chi$-binding function.
\subsection{Linear $\chi$-binding functions}\label{subsectlinear}

In Subsection \ref{subsectlinear}, we will consider classes of $pK_2$-free graphs for which we can establish a linear $\chi$-binding function.
Let us start Subsection \ref{subsectlinear} by considering the $\chi$-binding function for $\{pK_2, gem\}$-free graphs. 
For $p\geq2$, let us define a function $f^{\thecount}_p:\mathbb{N}\rightarrow\mathbb{N}$ 
as follows.
For $t\geq2$ and $p,m\geq3$, $f^{\thecount}_t(1)=1$, $f^{\thecount}_2(2)=3$, $f^{\thecount}_2(m)=m$, $f^{\thecount}_p(2)=2p-2$ and $f^{\thecount}_p(m)=f^{\thecount}_{p-1}(m)+2m-2$. 
Without much difficulty, one can observe that $f_p^{\thecount}$ is a linear polynomial.
\begin{theorem}\label{pk2gem}
For $p\geq2$, if $G$ is a $\{pK_2, gem\}$-free graph, then $\chi(G)\leq f^{\thecount}_p(\omega(G))$.
\end{theorem}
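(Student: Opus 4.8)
The plan is to follow Strategy 1, inducting on $p$ with the Wagon--Bharathi partition $V(G)=A\cup\left(\bigcup_i I_i\right)\cup\left(\bigcup_{(i,j)\in L}C_{i,j}\right)$. The base case $p=2$ is exactly Theorem \ref{3oromega}: for $\omega\geq 3$ it gives $\chi(G)\leq\omega=f_2^{\thecount}(\omega)$, while $\omega\leq 2$ is covered by Theorems \ref{2k2} and \ref{pk2}, so that $\chi(G)\leq f_2^{\thecount}(\omega)$ in all cases. For the inductive step I would take $G$ to be $\{(s+1)K_2,gem\}$-free with $\omega\geq 3$ and assume the bound holds for every $\{sK_2,gem\}$-free graph, so that it suffices to produce a colouring with at most $f_s^{\thecount}(\omega)+(2\omega-2)=f_{s+1}^{\thecount}(\omega)$ colours.

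First I would record the structural consequences of $gem$-freeness that carry over from the proof of Theorem \ref{3oromega} and from Lemma \ref{gemlemma}: for $\omega\geq 3$, $[I_i,I_j]$ is complete for all $i\neq j$; each $\langle C_{i,j}\rangle$ is $P_4$-free and, being disjoint from and non-adjacent to the edge $v_iv_j$, is $sK_2$-free, hence $\{sK_2,gem\}$-free with $\omega(\langle C_{i,j}\rangle)\leq\omega-j+2$; for $j\geq 4$ and $i\neq k$ there are no edges between $C_{i,j}$ and $C_{k,j}$, so each such column is the disjoint union of its blocks; and, crucially, Lemma \ref{gemlemma}(\ref{GIp}) lets me recycle a colour $l$ on any $a\in C_{i,j}$ with $av_l\notin E(G)$ without clashing with $\{v_l\}\cup I_l$.

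The colouring itself I would build in layers against this recurrence. Give colour $i$ to the independent set $\{v_i\}\cup I_i$, so that the backbone $A\cup\bigcup_i I_i$ consumes the $\omega$ colours $\{1,\dots,\omega\}$. The only block whose clique number can reach $\omega$ is $C_{1,2}$; by the induction hypothesis $\chi(\langle C_{1,2}\rangle)\leq f_s^{\thecount}(\omega)$, and I would colour it from a fresh palette. Every remaining block $C_{i,j}$ has $j\geq 3$, hence $\omega(\langle C_{i,j}\rangle)\leq\omega-1$; these I would colour by recycling the backbone colours $i$ and $j$ (legitimate by Lemma \ref{gemlemma}(\ref{GIp}), since such vertices miss $v_i$ and $v_j$ and therefore miss $I_i$ and $I_j$) and the $C_{1,2}$-palette, together with at most $\omega-2$ further colours, handling the columns $j\geq 4$ blockwise because they carry no internal cross-edges. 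Adding up, the backbone costs $\omega$, the block $C_{1,2}$ costs $f_s^{\thecount}(\omega)$, and the rest costs at most $\omega-2$ genuinely new colours, for a total of at most $f_s^{\thecount}(\omega)+2\omega-2=f_{s+1}^{\thecount}(\omega)$.

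The hard part will be this last layer: showing that all blocks $C_{i,j}$ with $j\geq 3$ can be absorbed into only $\omega-2$ colours beyond the backbone and the $C_{1,2}$-palette, even though a single such block may need as many as $f_s^{\thecount}(\omega-1)$ colours in isolation. This forces the recycling to be genuinely global rather than block-by-block, and the real work is to use $P_4$-freeness together with the no-cross-edge facts to argue that the blocks of different columns can \emph{share} the $C_{1,2}$-palette and the backbone colours, and then to verify that no clash arises across two distinct columns $j$ and $j'$. Pinning down precisely which colours each block may reuse, and checking these inter-column edges, is where the argument needs the most care; everything else is routine bookkeeping against the defining recurrence for $f_p^{\thecount}$.
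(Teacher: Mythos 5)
Your scaffolding --- Strategy 1, the base case via Theorem \ref{3oromega}, the structural facts drawn from Lemma \ref{gemlemma}, and the target recurrence $f_{s+1}(\omega)=f_s(\omega)+2\omega-2$ (writing $f_p$ for the binding function of the theorem) --- agrees with the paper, but the heart of the proof is absent: you explicitly defer the colouring of the blocks $C_{i,j}$ with $j\geq 3$ as ``the hard part,'' and the plan you sketch for it does not work. First, letting those blocks share the $C_{1,2}$-palette is unjustified: nothing forbids edges between $C_{1,2}$ and $C_{i,j}$ for $j\geq 3$ (for instance $a\in C_{1,2}$, $c\in C_{1,3}$, $ac\in E(G)$ forces neither a $gem$ nor a large induced matching; the argument that controlled such edges inside Theorem \ref{3oromega} used $2K_2$-freeness together with $[a,A]=\emptyset$, neither of which is available for $p\geq 3$). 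So a colour placed on $C_{1,2}$ by the induction hypothesis can land on a vertex adjacent to a vertex of $C_{1,3}$ carrying the same colour, and your outline has no mechanism to prevent this. Second, your worry that a single block with $j\geq 3$ ``may need as many as $f_s(\omega-1)$ colours in isolation'' contradicts a fact you yourself quoted: by Lemma \ref{gemlemma}(\ref{Gperfect}) (which, note, holds only for $j\geq 3$, not for $C_{1,2}$) such a block is $P_4$-free, hence perfect, so it needs only $\omega(\langle C_{i,j}\rangle)\leq\omega-j+2\leq\omega-1$ colours; this overestimate is what pushes you toward the unsupported palette-sharing.

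What you are missing is how the paper actually absorbs these blocks, namely parts (\ref{Gadjacency}) and (\ref{Gclique}) of Lemma \ref{gemlemma}: every component $H$ of $\langle C_{i,j}\rangle$ with $j\geq 3$ sees each vertex of $A$ in an all-or-nothing fashion and satisfies $\omega(H)\leq|A\backslash N_A(V(H))|$, so $H$, being perfect, can be coloured using \emph{only} the backbone colours of the $A$-vertices it completely misses, which is safe against $\{v_l\}\cup I_l$ by (\ref{GIp}). Consequently the blocks with $i\neq 2$, $j\geq 3$ need no new colours at all, and properness across two such blocks is checked by a $gem$ centred at $v_2$: if adjacent vertices $a\in C_{i,j}$ and $b\in C_{k,l}$ received the same colour $q$, then $\langle\{v_2,a,b,v_i,v_q\}\rangle\cong gem$. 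For $i=2$ one shows $[C_{2,r},C_{2,s}]=\emptyset$ for $r\neq s\geq 3$, so $\bigcup_{l\geq 3}C_{2,l}$ is $P_4$-free with clique number at most $\omega-1$ and can be coloured with colour $2$ plus at most $\omega-2$ new colours; finally $C_{1,2}$ alone is handled by induction with $f_s(\omega)$ fresh colours, kept disjoint from everything else. This yields $f_s(\omega)+2\omega-2$ without any sharing between the $C_{1,2}$-palette and the other blocks --- precisely the step your proposal cannot supply. As written, your text is the bookkeeping around a proof rather than a proof.
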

\begin{proof}

Let us apply Strategy 1.
Let $G$ be an $\{(s+1)K_2, gem\}$-free graph and
 $V(G)=A\cup\left(\bigcup\limits_{i=1}^{\omega}I_i\right)\cup\left(\bigcup\limits_{(i,j)\in L}C_{i,j}\right)$.
For $1\leq i\leq \omega$, let us assign the color $i$ to the  vertices in$\{v_i\}\cup I_i$.
For $j\geq 3$, if $H$ is a component of $\langle C_{i,j}\rangle$, then by using (\ref{Gperfect}) of Lemma \ref{gemlemma}, $\chi(H)=\omega(H)$.
Also, for $i\neq 2$ and $j\geq 3$, by using (\ref{GIp}) and (\ref{Gclique}) of Lemma \ref{gemlemma}, every component of $\langle C_{i,j}\rangle$ can be colored using the colors given to the vertices in $A$.
We shall show that this will be a proper coloring.
Suppose there exists two adjacent vertices $a\in C_{i,j}$ and $b\in C_{k,l}$ which receives the same color, say $q$.
Without loss of generality, assume $(i,j)<_L(k,l)$, then either $i<k$ or $i=k$ and $j<l$.
If $i<k$, then $bv_i\in E(G)$.
Since the vertices $a$ and $b$ have been assigned the same color $q$, we have $av_q, bv_q\notin E(G)$.
Also, for $i\neq 2,k\neq 2$ and $j,l\geq 3$, $N(v_2)\supseteq\{a,b,v_i,v_q\}$ and thus $\langle\{v_2, a,b,v_i,v_q\}\rangle\cong gem$, a contradiction.
If $i=k$ and $j<l$, then $bv_j\in E(G)$ and by similar arguments, we will get a contradiction.

For any $l\geq3$, we know that $\omega(\langle C_{2,l}\rangle)\leq\omega(G)-1$.  
For $i\neq 2$ and $j\geq 3$, we have $[v_2,C_{i,j}]$ is complete.
By using (\ref{GIp}) of Lemma \ref{gemlemma}, the color $2$ is available for all the vertices in$\cup_{l=3}^\omega C_{2,l}$.
For any $r>s\geq3$, we claim that  $[C_{2,r},C_{2,s}]=\emptyset$.
If there exist vertices $a\in C_{2,r}$ and $b\in C_{2,s}$ such that $ab\in E(G)$, then $\langle\{v_1,b,a,v_s,v_2\}\rangle\cong gem$, a contradiction.
By using (\ref{Gperfect}) of Lemma \ref{gemlemma}, we can color all the vertices in$\cup_{l=3}^\omega C_{2,l}$ with the color $2$ together with at most $\omega-2$ new colors.

Thus the vertices in$V(G)\backslash C_{1,2}$ can be colored using $2\omega-2$ colors.
Since $\langle C_{1,2}\rangle$ is $\{sK_2, gem\}$-free, by induction hypothesis, it can be colored with $f^{\thecount}_s(\omega)$ colors.
Therefore, $V(G)$ can be colored with at most $f^{\thecount}_s(\omega)+2\omega-2=f^{\thecount}_{s+1}(\omega)$ colors.
\end{proof}
Since $diamond$ is an induced subgraph of $gem$, for $p\geq2$, $f^{\thecount}_p(\omega(G))$ will be the $\chi$-binding function for $\{pK_2, diamond\}$-free graphs as well.
\begin{corollary}\label{pk2diamond}
For $p\geq2$, if $G$ is a $\{pK_2, diamond\}$-free graph, then $\chi(G)\leq f^{\thecount}_p(\omega(G))$.
\end{corollary}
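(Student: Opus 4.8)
The plan is to deduce the corollary from Theorem \ref{pk2gem} by a trivial induced-subgraph containment, so that no fresh coloring argument is needed. The single fact I would establish is that the $diamond$ (that is, $K_4$ with one edge deleted) appears as an induced subgraph of the $gem$. Realize the $gem$ as a path $x_1x_2x_3x_4$ together with an apex $u$ adjacent to each of $x_1,x_2,x_3,x_4$, and look at the four vertices $\{u,x_1,x_2,x_3\}$: the induced edges are $ux_1,ux_2,ux_3,x_1x_2,x_2x_3$, with $x_1x_3$ the only missing pair, so $\langle\{u,x_1,x_2,x_3\}\rangle\cong diamond$ and hence $diamond\sqsubseteq gem$.

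From this the corollary follows at once. If $G$ were to contain an induced $gem$, then by the observation above it would contain an induced $diamond$; contrapositively, every $diamond$-free graph is $gem$-free. Thus a $\{pK_2,diamond\}$-free graph is, in particular, $\{pK_2,gem\}$-free, and Theorem \ref{pk2gem} applies directly to yield $\chi(G)\leq f^{\thecount}_p(\omega(G))$.

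I expect no real obstacle here: the entire content is the one-line inclusion $diamond\sqsubseteq gem$, after which the bound is simply inherited. The only point I would be careful about is the direction of the forbidden-subgraph inclusion — it is the smaller graph $diamond$ that forbids the larger $gem$, so that the $diamond$-free class sits \emph{inside} the $gem$-free class and therefore enjoys the same $\chi$-binding function. No induction, case analysis, or use of the $C_{i,j}$ partition is required beyond what Theorem \ref{pk2gem} already contains.
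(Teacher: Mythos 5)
Your proposal is correct and matches the paper's own justification exactly: the paper likewise notes that the $diamond$ is an induced subgraph of the $gem$, so every $\{pK_2, diamond\}$-free graph is $\{pK_2, gem\}$-free and inherits the bound $f^{1}_p(\omega(G))$ from Theorem \ref{pk2gem}. Your explicit verification of the containment $diamond\sqsubseteq gem$ and the careful note on the direction of the free-class inclusion are exactly the (implicit) content of the paper's one-line argument.
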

\newcounter{diamond}
\setcounter{diamond}{\thecount}
\stepcounter{count}
Next, for $p\geq2$, let us define a function $f^{\thecount}_p:\mathbb{N}\rightarrow\mathbb{N}$ (which will serve as a $\chi$-binding function for $\{pK_2, K_2+P_4\}$-free graphs) as follows.
For $t\geq2$, $m,p\geq3$ and $s\geq 4$, define $f^{\thecount}_t(1)=1$, $f^{\thecount}_2(2)=3$, $f^{\thecount}_2(3)=4$, $f^{\thecount}_2(s)=s+2$, $f^{\thecount}_p(2)=2p-2$, $f^{\thecount}_p(3)=2p^2-3p+4$ and $f^{\thecount}_p(m)=f^{\thecount}_{p-1}(m)+2f^{\thediamond}_{p-1}(m-1)+3m-6$.
Without much difficulty, one can observe that $f_p^{\thecount}$ is linear.
\begin{theorem}\label{pk2k2+p4}
For $p\geq 2$, if $G$ is a $\{pK_2,K_2+P_4\}$-free graphs, then $\chi(G)\leq f^{\thecount}_p(\omega(G))$.
\end{theorem}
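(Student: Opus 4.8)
The plan is to invoke \textbf{Strategy 2}, so that the cases $\omega(G)\le 3$ are settled by Theorems \ref{2k2}, \ref{pk2}, \ref{2k2k4} and \ref{pk2omega3}, the base case $p=2$ with $\omega(G)\ge 4$ is precisely Theorem \ref{2k2k2+p4} (note $f^{\thecount}_2(s)=s+2$), and it remains only to treat an $\{(s+1)K_2,K_2+P_4\}$-free graph $G$ with $\omega(G)\ge 4$, assuming the bound for every $\{sK_2,K_2+P_4\}$-free graph. I would fix the partition $V(G)=A\cup\left(\bigcup_{i=1}^{\omega}I_i\right)\cup\left(\bigcup_{(i,j)\in L}C_{i,j}\right)$ and split the columns into those with $j\le 3$, namely $C_{1,2},C_{1,3},C_{2,3}$, which are colored recursively, and those with $j\ge 4$, to which Lemma \ref{pk2k2+p4lemma} applies. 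Since each $C_{i,j}$ is $sK_2$-free, $\langle C_{1,2}\rangle$ is $\{sK_2,K_2+P_4\}$-free with $\omega\le\omega(G)$, so by the induction hypothesis it can be colored with $f^{\thecount}_s(\omega(G))$ fresh colors. The crucial observation is that $\langle C_{1,3}\rangle$ and $\langle C_{2,3}\rangle$ are in fact $gem$-free: as $[v_2,C_{1,3}]$ is complete, a $gem=K_1+P_4$ inside $C_{1,3}$ together with $v_2$ would give a $K_2+P_4$ (the gem's apex and $v_2$ forming the $K_2$), contradicting $K_2+P_4$-freeness; the same argument with $v_1$ works for $C_{2,3}$. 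Being $\{sK_2,gem\}$-free with clique number at most $\omega(G)-1$, each is colorable with $f^{\thediamond}_s(\omega(G)-1)$ fresh colors by Theorem \ref{pk2gem}, which accounts for the term $2f^{\thediamond}_{p-1}(m-1)$.

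For the columns with $j\ge 4$ I would imitate the coloring in Theorem \ref{pk2gem}, but using the \emph{pair} $v_1,v_2$ in place of the single vertex $v_2$. After coloring $\{v_i\}\cup I_i$ with $i$ for $1\le i\le\omega$, consider the columns $C_{i,j}$ with $i\ge 3$, to which both $v_1$ and $v_2$ are complete. Each component $H$ is perfect by Lemma \ref{pk2k2+p4lemma}(\ref{Gperfect1}) and, by (\ref{GIp1}) and (\ref{Gclique1}), can be colored with the colors of its non-neighbours in $A$, all of which lie in $\{3,\dots,\omega\}$. The families $\bigcup_{j\ge 4}C_{1,j}$ and $\bigcup_{j\ge 4}C_{2,j}$ play the role of the $C_{2,l}$ family: I would prove $[C_{1,r},C_{1,s}]=\emptyset$ and $[C_{2,r},C_{2,s}]=\emptyset$ for $4\le s<r$ (an edge here yields an induced $P_4$ on $\{v_1,v_s,a,b\}$, resp. $\{v_2,v_s,a,b\}$, to which two further clique vertices attach completely, producing a $K_2+P_4$), so each family is a disjoint union of perfect graphs of clique number at most $\omega(G)-2$. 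Hence each can be colored with $\omega(G)-2$ colors, one being the reused color $1$ (resp.\ $2$), available by (\ref{GIp1}), and $\omega(G)-3$ genuinely new ones. Together with $A\cup\bigcup_i I_i$ this uses $\omega(G)+2(\omega(G)-3)=3m-6$ colors.

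The device that makes the pieces fit without interference is a colour-set bookkeeping argument: the columns with $i\ge 3$ never receive colors $1$ or $2$ (as $v_1,v_2$ are among their neighbours), so their palette $\{3,\dots,\omega\}$ is disjoint from the palettes $\{1\}\cup S_1$ and $\{2\}\cup S_2$ of the two families, while $C_{1,2},C_{1,3},C_{2,3}$ get entirely fresh colors; consequently the only genuine monochromatic adjacencies I must rule out are those \emph{among} the columns with $i\ge 3$. This is the main obstacle, and it is exactly where the join structure of $K_2+P_4$ enters: if adjacent vertices $a\in C_{i,j}$ and $b\in C_{k,l}$ with $i,k\ge 3$ shared a color $q$, then $v_q$ is a non-neighbour of both, one checks $q\notin\{1,2\}$ and (using that $b$ is adjacent to $v_i$, resp.\ $v_j$) that $\{a,b,v_i,v_q\}$, or $\{a,b,v_j,v_q\}$ when $i=k$, induces a $P_4$ to which $v_1$ and $v_2$ attach completely, giving a forbidden $K_2+P_4$. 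Summing the four contributions yields $f^{\thecount}_s(\omega)+2f^{\thediamond}_s(\omega-1)+(3\omega-6)=f^{\thecount}_{s+1}(\omega)$, completing the induction.
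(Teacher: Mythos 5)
Your proposal is correct and follows essentially the same route as the paper's proof: Strategy 2 with the base case given by Theorem \ref{2k2k2+p4}, recursion on $\langle C_{1,2}\rangle$, the observation that $\langle C_{1,3}\rangle$ and $\langle C_{2,3}\rangle$ become $\{sK_2,gem\}$-free (via $v_2$, resp.\ $v_1$, being complete to them) so Theorem \ref{pk2gem} applies, and a coloring of the columns with $j\geq 4$ using $3\omega-6$ colors in total. The only difference is one of exposition: where the paper compresses this last step into the phrase ``by using similar coloring technique as done in Theorem \ref{pk2gem},'' you have written out explicitly what that technique amounts to here (the pair $v_1,v_2$ replacing $v_2$, the emptiness of $[C_{1,r},C_{1,s}]$ and $[C_{2,r},C_{2,s}]$, and the $K_2+P_4$ interference argument), and these details check out.
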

\begin{proof}
Let us apply Strategy 2.
Let $G$ be an $\{(s+1)K_2, K_2+P_4\}$-free graph and  $V(G)=A\cup\left(\bigcup\limits_{i=1}^{\omega}I_i\right)\cup\left(\bigcup\limits_{(i,j)\in L}C_{i,j}\right)$.
As done in Section \ref{sect2k2}, for $\{2K_2,gem\}$-free graphs  and $\{2K_2,K_2+P_4\}$-free graphs, here also
by using similar coloring technique as done in Theorem \ref{pk2gem}, we can color the vertices in $A\cup\left(\bigcup\limits_{i=1}^{\omega}I_i\right)\cup\left(\bigcup\limits_{i\geq3}C_{i,j}\right)$ with at most $\omega$ colors.
Also, as in Theorem \ref{pk2gem}, by using (\ref{Gperfect}) of Lemma \ref{pk2k2+p4lemma}, we can color all the vertices in$\cup_{l=4}^\omega C_{2,l}$ and $\cup_{l=4}^\omega C_{1,l}$ with $\omega-2$ colors each such that the color $2$ and color $1$ respectively can be again used.

Thus the vertices in$V(G)\backslash (C_{1,2}\cup C_{1,3}\cup C_{2,3})$ have been colored using $3\omega-6$ colors.
Clearly $\langle C_{i,j}\rangle$ is $\{sK_2,K_2+P_4\}$-free for every $(i,j)\in L$.
Since $[v_2,C_{1,3}]=complete$ and $[v_1,C_{2,3}]=complete$, both $\langle C_{1,3}\rangle$ and $\langle C_{2,3}\rangle$ are $\{sK_2,gem\}$-free and hence can be colored with $f^{\thediamond}_{s}(\omega-1)$ colors each.
Also, $\langle C_{1,2}\rangle$ is $\{sK_2, K_2+P_4\}$-free, and hence it can be colored with $f^{\thecount}_s(\omega)$ colors.
Therefore, altogether $G$ can be colored with at most $f^{\thecount}_s(\omega)+2f^{\thediamond}_{s}(\omega-1)+3\omega-6=f^{\thecount}_{s+1}(\omega)$ colors.
\end{proof}
\stepcounter{count}
Next, for $p\geq2$, let us define a function $f^{\thecount}_p:\mathbb{N}\rightarrow\mathbb{N}$ which will serve as a $\chi$-binding function for $\{pK_2, HVN\}$-free graphs as follows.
For $t\geq2$, $p\geq3$ and $m,s\geq4$, $f^{\thecount}_t(1)=1$, $f^{\thecount}_2(2)=3$, $f^{\thecount}_2(3)=4$, $f^{\thecount}_p(2)=2p-2$, $f^{\thecount}_2(s)=s$, $f^{\thecount}_p(3)=2p^2-3p+4$ and $f^{\thecount}_p(m)=f^{\thecount}_{p-1}(m)+2f^{\thecount}_{p-1}(m-1)$. 
Here also one can observe that $f_p^{\thecount}$ is a linear polynomial on $\omega$.
\begin{theorem}\label{pk2hvn}
For $p\geq2$, if $G$ is a $\{pK_2, HVN\}$-free graph, then $\chi(G)\leq f^{\thecount}_p(\omega(G))$.
\end{theorem}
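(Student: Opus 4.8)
The plan is to apply Strategy 2 and argue by induction on $p$, so that it only remains to treat an $\{(s+1)K_2, HVN\}$-free graph $G$ with $\omega(G)\geq 4$, writing $V(G)=A\cup\left(\bigcup_{i=1}^{\omega}I_i\right)\cup\left(\bigcup_{(i,j)\in L}C_{i,j}\right)$ and $B=A\cup\left(\bigcup_{i=1}^{\omega}I_i\right)$. First I would reproduce, essentially verbatim, the structural analysis carried out in Theorem \ref{2k2hvn}: each of those deductions uses only $HVN$-freeness together with $\omega(G)\geq 4$, and never $2K_2$-freeness, so they survive unchanged here. Concretely, this yields that $[I_i,I_j]$ is complete for all $i,j$, that $C_{i,j}=\emptyset$ whenever $j\geq 4$, that $N_A(C_{1,3})=\{v_2\}$ and $N_A(C_{2,3})=\{v_1\}$, and finally that $N_B(C_{1,3})\subseteq\{v_2\}\cup I_2$ and $N_B(C_{2,3})\subseteq\{v_1\}\cup I_1$. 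Hence $V(G)=B\cup C_{1,2}\cup C_{1,3}\cup C_{2,3}$, the graph $\langle B\rangle$ is complete $\omega$-partite with parts $\{v_i\}\cup I_i$, and each of $C_{1,3}$, $C_{2,3}$ is glued to $B$ only through the single part $\{v_2\}\cup I_2$, respectively $\{v_1\}\cup I_1$.

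Next I would colour the three remaining cells by induction. Since $C_{i,j}$ is disjoint from and anticomplete to $\{v_i,v_j\}$, an $sK_2$ inside $C_{i,j}$ together with the edge $v_iv_j$ would form an $(s+1)K_2$, so $\langle C_{i,j}\rangle$ is $\{sK_2,HVN\}$-free; moreover $\omega(\langle C_{1,2}\rangle)\leq\omega$ while $\omega(\langle C_{1,3}\rangle),\omega(\langle C_{2,3}\rangle)\leq\omega-1$. By the induction hypothesis I can colour $\langle C_{1,2}\rangle$ with a palette $P_0$ of $f^{\thecount}_s(\omega)$ colours and each of $\langle C_{1,3}\rangle$, $\langle C_{2,3}\rangle$ with palettes $P_1$, $P_2$ of $f^{\thecount}_s(\omega-1)$ colours, where $P_0,P_1,P_2$ are pairwise disjoint. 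Their union has $f^{\thecount}_s(\omega)+2f^{\thecount}_s(\omega-1)=f^{\thecount}_{s+1}(\omega)$ colours, which is exactly the target, so the entire point is to colour $B$ \emph{without spending a single new colour}.

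The crux, and the step I expect to be the real obstacle, is therefore to absorb the $\omega$ colours forced by the complete $\omega$-partite graph $\langle B\rangle$ into $P_1\cup P_2$. Here the attachment facts pay off. Because $B$ will be coloured inside $P_1\cup P_2$, which is disjoint from the palette $P_0$ of $C_{1,2}$, no edge between $B$ and $C_{1,2}$ can ever create a conflict; and $|P_1\cup P_2|=2f^{\thecount}_s(\omega-1)\geq 2(\omega-1)\geq\omega$ since $f^{\thecount}_s$ is increasing with $f^{\thecount}_s(\omega-1)\geq\omega-1$. Within $P_1\cup P_2$ the only constraints are that $\{v_2\}\cup I_2$, being completely joined to $C_{1,3}$, must avoid $P_1$ and hence lie in $P_2$, and symmetrically $\{v_1\}\cup I_1$ must lie in $P_1$; since $\{v_1\}\cup I_1$ has no neighbour in $C_{1,3}$, reusing a $P_1$ colour there is harmless, and dually for $\{v_2\}\cup I_2$. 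Every other part $\{v_i\}\cup I_i$ with $i\geq 3$ has no neighbour in $C_{1,3}\cup C_{2,3}$ and may reuse any colour of $P_1\cup P_2$. Assigning the $\omega$ parts distinct colours subject to these two forced placements is possible because $|P_1|,|P_2|\geq\omega-1$, and the resulting colouring is proper; this gives $\chi(G)\leq f^{\thecount}_{s+1}(\omega)$ and closes the induction.
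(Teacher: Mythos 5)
Your proposal is correct, and it shares the paper's skeleton: Strategy 2, induction on $p$, the structural facts imported from Theorem \ref{2k2hvn} (which, as you observe, use only $HVN$-freeness together with $\omega\geq4$), the bounds $\omega(\langle C_{1,3}\rangle),\omega(\langle C_{2,3}\rangle)\leq\omega-1$, and the same recursion $f_{s+1}(\omega)=f_s(\omega)+2f_s(\omega-1)$. Where you genuinely diverge is the final bookkeeping, and your version is the more careful one. The paper colours $B$ first with colours $1,\ldots,\omega$ and then lets $C_{1,3}$ reuse $\{1,3,4,\ldots,\omega\}$ and $C_{2,3}$ reuse $\{2,3,4,\ldots,\omega\}$, mirroring the $2K_2$ case; you invert this, giving the three cells pairwise disjoint palettes $P_0,P_1,P_2$ and absorbing the complete $\omega$-partite graph $\langle B\rangle$ into $P_1\cup P_2$. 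The gain is real: in the paper's scheme $C_{1,3}$ and $C_{2,3}$ share the colours $3,\ldots,\omega$, which is only guaranteed safe if $[C_{1,3},C_{2,3}]=\emptyset$. That holds when $G$ is $2K_2$-free, but for $p\geq3$ an edge $ab$ with $a\in C_{1,3}$, $b\in C_{2,3}$ creates only a $2K_2$ (with $v_3v_4$) and no $HVN$, so such edges cannot be excluded, and the paper's colouring as written needs a small repair (e.g.\ let $C_{2,3}$ reuse only colour $2$); your disjoint palettes rule out any such clash, and they also make the fact about $N_B(C_{1,2})$ superfluous, since $B$ never touches $P_0$. Two minor caveats. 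First, only $v_2$, not all of $\{v_2\}\cup I_2$, is complete to $C_{1,3}$; your conclusion (part $2$ must avoid $P_1$ and may safely take any colour of $P_2$) is still exactly right, as it only needs $N_B(C_{1,3})\subseteq\{v_2\}\cup I_2$ and $N_B(C_{2,3})\subseteq\{v_1\}\cup I_1$. Second, both you and the paper pass from $\omega(\langle C_{1,3}\rangle)\leq\omega-1$ to ``colourable with $f_s(\omega-1)$ colours,'' which tacitly assumes $f_s$ is non-decreasing; this actually fails at one point, since $f_3(3)=13>12=f_3(4)$, but the wrinkle is inherited from the paper rather than introduced by you, and is repaired by working with $\max_{m'\leq m}f_s(m')$ throughout.
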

\begin{proof}

We shall apply Strategy 2 to prove the result. 
Let $G$ be an $\{(s+1)K_2, HVN\}$-free graph with $\omega\geq 4$. As observed in the proof of Theorem \ref{2k2hvn}, since $G$ is $\{HVN\}$-free, 
we have $V(G)= A \cup C_{1,2}\cup C_{1,3}\cup C_{2,3}\cup\left(\bigcup\limits_{i=1}^{\omega}I_i\right)$ and for $B=A \cup\left(\bigcup\limits_{i=1}^{\omega}I_i\right)$, $N_B (C_{1,3})\subseteq(\{v_2\}\cup I_2)$, $N_B (C_{2,3})\subseteq(\{v_1\}\cup I_1)$, if $a\in C_{1,2}$, then for some $s\in \{1,2,\ldots,\omega\}$, $N_B (C_{1,2})\subseteq(\{v_s\}\cup I_s)$. 
For $1\leq i\leq \omega$, let us give the color $i$ to the  vertices in$\{v_i\}\cup I_i$.
Hence $A\cup \left(\bigcup\limits_{i=1}^{\omega}I_i\right)$ can be colored with $\omega(G)$ colors.
Clearly, for $(i,j)\in L$,  each $\langle C_{i,j}\rangle$ is $\{sK_2, HVN\}$-free and $\omega(\langle C_{1,3}\rangle)\leq\omega(G)-1$ and $\omega(\langle C_{2,3}\rangle)\leq\omega(G)-1$.  


%


Since $N_B (C_{1,3})\subseteq(\{v_2\}\cup I_2)$, $N_B (C_{2,3})\subseteq(\{v_1\}\cup I_1)$
we can use the colors  $\{1,3,4,\ldots,\omega\}$ for the vertices in $C_{1,3}$ and use the colors $\{2,3,4,\ldots,\omega\}$ for the vertices in $C_{2,3}$.
Since $C_{i,j}$ is $\{sK_2, HVN\}$-free, the vertices in $C_{1,2}$, $C_{1,3}$ and $ C_{2,3}$ can be colored with at most $f^{\thecount}_s(\omega)$, $f^{\thecount}_s(\omega-1)$ and $f^{\thecount}_s(\omega-1)$ colors respectivley. 
Therefore, altogether $G$ can be colored with at most $\omega+f^{\thecount}_s(\omega)+2f^{\thecount}_s(\omega-1)-\omega=f^{\thecount}_s(\omega)+2f^{\thecount}_s(\omega-1)=f^{\thecount}_{s+1}(\omega)$ colors.
\end{proof}
\newcounter{hvn}
\setcounter{hvn}{\thecount}
\stepcounter{count}
Next, for $p\geq2$, let us define a function $f^{\thecount}_p:\mathbb{N}\rightarrow\mathbb{N}$ which will serve as a $\chi$-binding function for $\{pK_2, K_5-e\}$-free graphs as follows.
For $t\geq2$, $p\geq3$, $m\geq4$ and $s\geq5$, $f^{\thecount}_t(1)=1$, $f^{\thecount}_2(2)=3$, $f^{\thecount}_2(3)=4$, $f^{\thecount}_2(4)=6$, $f^{\thecount}_2(s)=s$, $f^{\thecount}_p(2)=2p-2$, $f^{\thecount}_p(3)=2p^2-3p+4$ and $f^{\thecount}_p(m)=f^{\thecount}_{p-1}(m)+2f^{\thediamond}_{p-1}(m-1)+3m-6$.
Again it can be seen that $f_p^{\thecount}$ is a linear polynomial on $\omega$.
\begin{theorem}\label{pk2k5-e}
For $p\geq2$, if $G$ is a $\{pK_2, K_5-e\}$-free graph, then $\chi(G) \leq f^{\thecount}_p(\omega(G))$.
\end{theorem}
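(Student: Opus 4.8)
The plan is to invoke Strategy 2, so that the cases $\omega(G)\in\{1,2,3\}$ are settled at once by Theorems \ref{2k2}, \ref{pk2}, \ref{2k2k4} and \ref{pk2omega3}, the base case $p=2$ with $\omega(G)\geq 4$ is precisely Theorem \ref{2k2k5-e} (note $f^{\thecount}_2(4)=6$ and $f^{\thecount}_2(s)=s$ for $s\geq5$ match that statement), and all the content lies in the inductive step. So I would assume the bound for every $\{sK_2,K_5-e\}$-free graph, take $G$ to be $\{(s+1)K_2,K_5-e\}$-free with $\omega=\omega(G)\geq4$, and write $V(G)=A\cup\left(\bigcup_i I_i\right)\cup\left(\bigcup_{(i,j)\in L}C_{i,j}\right)$.

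First I would record the structure that forbidding $K_5-e$ forces. Since $\omega\geq4$, any vertex of $I_i$ together with three vertices of $A\setminus\{v_i\}$ and $v_i$ induces a $K_5-e$, so $I_i=\emptyset$ for every $i$ and $V(G)=A\cup\bigcup_{(i,j)}C_{i,j}$. Because $(1,2)<_L(1,3)<_L(2,3)$, the definition of the $C_{i,j}$ makes $[v_2,C_{1,3}]$ and $[v_1,C_{2,3}]$ complete (a vertex of $C_{1,3}$ missing $v_2$ would already lie in $C_{1,2}$, and symmetrically). From the general note preceding the section I also keep: $[v_k,C_{i,j}]$ is complete for $k\in\{1,\dots,j-1\}\setminus\{i\}$, and $\omega(\langle C_{i,j}\rangle)\leq\omega-j+2$, so $\omega(\langle C_{1,3}\rangle),\omega(\langle C_{2,3}\rangle)\leq\omega-1$ while $\omega(\langle C_{1,2}\rangle)\leq\omega$. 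Finally each $\langle C_{i,j}\rangle$ is $sK_2$-free, since the edge $v_iv_j$ is anticomplete to $C_{i,j}$ and an $sK_2$ inside would extend to an $(s+1)K_2$.

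Next I would run the colouring exactly as in Theorem \ref{pk2k2+p4}, whose recurrence is identical. For each $C_{i,j}$ with $j\geq4$ there are two indices $s,s'\in\{1,\dots,j\}\setminus\{i,j\}$ with $[v_s,C_{i,j}]$ and $[v_{s'},C_{i,j}]$ complete, so a $P_4$ with edges $ab,bc,cd$ inside $\langle C_{i,j}\rangle$ would give $\langle\{v_s,v_{s'},a,b,c\}\rangle\cong K_5-e$ (only $ac$ missing); hence such $\langle C_{i,j}\rangle$ is $P_4$-free and therefore perfect. Using this together with the completeness facts, I colour $A$ and all $C_{i,j}$ with $i\geq3$ by $\{1,\dots,\omega\}$, the same two-apex $K_5-e$ argument ruling out a monochromatic edge between distinct $C$'s; then, exactly as in Theorem \ref{pk2k2+p4} and reusing colours $1$ and $2$, I colour $\bigcup_{l\geq4}C_{1,l}$ and $\bigcup_{l\geq4}C_{2,l}$, so that $V(G)\setminus(C_{1,2}\cup C_{1,3}\cup C_{2,3})$ uses $3\omega-6$ colours. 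The decisive observation for the three leftover sets is that $\langle C_{1,3}\rangle$ and $\langle C_{2,3}\rangle$ are $gem$-free: as $[v_2,C_{1,3}]$ is complete, a $gem$ in $\langle C_{1,3}\rangle$ with apex $e$ and underlying path with edges $ab,bc,cd$ yields $\langle\{v_2,a,b,c,e\}\rangle\cong K_5-e$ (only $ac$ missing), and symmetrically for $C_{2,3}$. Being $\{sK_2,gem\}$-free with clique number at most $\omega-1$, each is colourable with $f^{\thediamond}_s(\omega-1)$ colours by Theorem \ref{pk2gem}, while $\langle C_{1,2}\rangle$ is $\{sK_2,K_5-e\}$-free with clique number at most $\omega$, hence colourable with $f^{\thecount}_s(\omega)$ colours by the induction hypothesis. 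Summing gives $f^{\thecount}_s(\omega)+2f^{\thediamond}_s(\omega-1)+3\omega-6=f^{\thecount}_{s+1}(\omega)$, as required.

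Since the recurrence and bookkeeping coincide with those of Theorem \ref{pk2k2+p4}, I expect the main obstacle to be purely structural rather than arithmetic: supplying the \emph{two} distinct clique apexes $v_s,v_{s'}$ that force the $C_{i,j}$ with $j\geq4$ to be $P_4$-free, and identifying the exact five vertices that realise a $K_5-e$ when a $gem$ sits inside $C_{1,3}$ or $C_{2,3}$. The step I would check most carefully is getting that missing edge right, so that the induced subgraph is genuinely $K_5-e$ and the reductions of $\langle C_{1,3}\rangle,\langle C_{2,3}\rangle$ to the $\{sK_2,gem\}$-free case are valid.
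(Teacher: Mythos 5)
Your proposal is correct and, at the level of overall structure, it is the paper's proof: Strategy 2, induction on $p$, with $\langle C_{1,2}\rangle$ receiving the inductive budget, $\langle C_{1,3}\rangle$ and $\langle C_{2,3}\rangle$ each receiving the budget of Theorem \ref{pk2gem}/Corollary \ref{pk2diamond} at clique number $\omega-1$, and the rest of the graph coloured with $3\omega-6$ colours, which reproduces the defining recurrence of the binding function. Two internal deviations are worth recording. (i) The paper first carries over from Theorem \ref{2k2k5-e} the facts that use only $K_5-e$-freeness, namely $I_i=\emptyset$, $C_{i,j}=\emptyset$ for all $j\geq5$, and $N_A(C_{i,4})=\{v_1,v_2,v_3\}\setminus\{v_i\}$; after that it only has to colour $C_{1,4}\cup C_{2,4}\cup C_{3,4}$, each piece being a union of cliques. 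You never establish the emptiness of the $j\geq5$ sets; instead you run the two-apex argument of Theorem \ref{pk2k2+p4} uniformly over all $j\geq4$ (each such $C_{i,j}$ has two common neighbours $v_s,v_{s'}$ in $A$, so an induced $P_3$ inside it, a non-complete neighbourhood of a component, or a cross-edge between distinct such sets all yield $K_5-e$), and this fits inside the same $3\omega-6$ budget because the actually-empty sets cost nothing. Your route is therefore slightly more self-contained, at the cost of not exposing the true structure $V(G)=A\cup C_{1,2}\cup C_{1,3}\cup C_{2,3}\cup C_{1,4}\cup C_{2,4}\cup C_{3,4}$. (ii) For $C_{1,3}$ and $C_{2,3}$ the paper observes they are diamond-free (a diamond together with the dominating vertex $v_2$, resp.\ $v_1$, is exactly $K_5-e$) and cites Corollary \ref{pk2diamond}, whereas you prove the weaker property of being gem-free and cite Theorem \ref{pk2gem}; since both results carry the same binding function, the arithmetic, and hence the recurrence, is identical either way.
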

\begin{proof}
Let us prove by applying Strategy 2.
Let $G$ be an $\{(s+1)K_2, K_5-e\}$-free graph with $\omega\geq 4$.
As observed in Theorem \ref{2k2k5-e}, we can see that $V(G)=A\cup C_{1,2}\cup C_{1,3}\cup C_{2,3}\cup C_{1,4}\cup C_{2,4}\cup C_{3,4}$ and for $1\leq i\leq3$, $N_A(C_{i,4})=\{v_1,v_2,v_3\}\backslash\{v_i\}$.
For $1\leq i\leq3$, we claim that $C_{i,4}$ is a union of cliques.
On contrary, for some integer $i$, $1\leq i\leq3$, if $\langle C_{i,4}\rangle$ contains an induced $P_3$, say $P$, then $\langle V(P)\cup\{v_1,v_2,v_3\}\backslash\{v_i\}\rangle\cong K_5-e$, a contradiction.
Hence for $1\leq i\leq3$, each $C_{i,4}$ is $(\omega-2)$-colorable.
Next for $1\leq i\leq\omega$, let us assign the color $i$ to the vertex $v_i$.
Since  $N_A(C_{i,4})=\{v_1,v_2,v_3\}\backslash\{v_i\}$, for $1\leq i\leq3$, the colors $\{1,4,5,\ldots,\omega\}, \{2,4,5,\ldots,\omega\}$ and $\{3,4,5,\ldots,\omega\}$ can be used for the vertices in $C_{1,4}, C_{2,4}$ and $C_{3,4}$ respectively. 
That is, altogether the $\omega$ colors $\{1,2,3,\ldots,\omega\}$ can be used for the vertices in $C_{1,4}\cup C_{2,4}\cup C_{3,4}$. 
Hence the number of new colors required for coloring the vertices in $C_{1,4}\cup C_{2,4}\cup C_{3,4}$ is at most $3(\omega-2)-\omega$. 
We know that,  $[v_2,C_{1,3}]$ is complete and $[v_1,C_{2,3}]$ is complete. 
Hence $\langle C_{1,3}\rangle$ and $\langle C_{2,3}\rangle$ are $\{sK_2, diamond\}$-free. 
In addition, $\langle C_{1,2}\rangle$ is an $\{sK_2, K_5-e\}$-free subgraph of $G$.
Hence by using our assumption and by using Corollary \ref{pk2diamond}, $\langle C_{1,2}\cup C_{1,3}\cup C_{2,3}\rangle$ is $\left(f^{\thecount}_s(\omega)+2f^{\thediamond}_s(\omega-1)\right)$-colorable. 
Therefore, $G$ is $\Big(\omega+f^{\thecount}_s(\omega)+2f^{\thediamond}_s(\omega-1)+3(\omega-2)-\omega=f^{\thecount}_s(\omega)+2f^{\thediamond}_s(\omega-1)+3\omega-6=f^{\thecount}_{s+1}(\omega)\Big)$-colorable.
\end{proof}
%
\stepcounter{count}

\subsection{Quadratic $\chi$-binding functions}\label{subsectquadratic}
In Subsection \ref{subsectquadratic}, we will consider classes of $pK_2$-free graphs for which we can establish a quadratic $\chi$-binding function. Let us start Subsection \ref{subsectquadratic} by considering the $\chi$-binding function for $\{pK_2, butterfly\}$-free graphs.
For $p\geq2$, let us define a function $f^{\thecount}_p:\mathbb{N}\rightarrow\mathbb{N}$ 
as follows. 
For $t\geq2$, $m,p\geq3$ and $s\geq 4$, $f^{\thecount}_t(1)=1$, $f^{\thecount}_2(2)=3$, $f^{\thecount}_2(3)=4$, $f^{\thecount}_2(s)=\binom{s+1}{2}$, $f^{\thecount}_p(2)=2p-2$ and $f^{\thecount}_p(m)=f^{\thecount}_{p-1}(m)+\binom{m+1}{2}-1$.
Without much difficulty, one can observe that $f_p^{\thecount}$ is a quadratic polynomial.
\begin{theorem}\label{pk2butterfly}
For $p\geq2$, if $G$ is a $\{pK_2, butterfly\}$-free graph, then $\chi(G) \leq f^{\thecount}_p(\omega(G))$.
\end{theorem}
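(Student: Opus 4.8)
The plan is to run Strategy 1, inducting on $p$ while working with the clique partition $V(G)=A\cup\left(\bigcup_i I_i\right)\cup\left(\bigcup_{(i,j)\in L} C_{i,j}\right)$. The first thing I would record is that $butterfly=K_1+2K_2$ and therefore contains an induced $2K_2$; hence every $\{2K_2,butterfly\}$-free graph is already $2K_2$-free. This makes the base case $p=2$ immediate: Theorem \ref{2k2} gives $\binom{\omega+1}{2}=f^{\thecount}_2(\omega)$ for $\omega\ge4$, Theorem \ref{2k2k4} gives $4=f^{\thecount}_2(3)$ for $\omega=3$, and the cases $\omega\le 2$ follow from Theorems \ref{2k2} and \ref{pk2}. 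So the whole content is the inductive step: assuming the bound for $\{sK_2,butterfly\}$-free graphs, prove it for an $\{(s+1)K_2,butterfly\}$-free graph $G$ with $\omega\ge3$.

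Two structural facts drive the step. First, for each $(i,j)\in L$ with $j\ge 3$ there is an index $s\in\{1,\dots,j-1\}\setminus\{i\}$ with $[v_s,C_{i,j}]$ complete, so an induced $2K_2$ inside $C_{i,j}$ together with $v_s$ would be a $K_1+2K_2=butterfly$; hence each $\langle C_{i,j}\rangle$ with $j\ge3$ is $2K_2$-free. Second, $\langle C_{1,2}\rangle$ is $\{sK_2,butterfly\}$-free, since an induced $sK_2$ in $C_{1,2}$ together with the edge $v_1v_2$ (whose ends have no neighbour in $C_{1,2}$) would give an induced $(s+1)K_2$. By the induction hypothesis $\langle C_{1,2}\rangle$ is colourable with $f^{\thecount}_s(\omega)$ colours, which I assign from a private palette.

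It then remains to colour $H:=\langle V(G)\setminus C_{1,2}\rangle$ with $\binom{\omega+1}{2}-1$ colours from a disjoint palette, since the two bounds add to $f^{\thecount}_s(\omega)+\binom{\omega+1}{2}-1=f^{\thecount}_{s+1}(\omega)$. The key point is that $C_{1,2}$ is the lexicographically first class, so every vertex of $V(G)\setminus(A\cup C_{1,2})$ is adjacent to $v_1$ or to $v_2$, and moreover $\langle N[v_1]\rangle$ and $\langle N[v_2]\rangle$ are $2K_2$-free (an induced $2K_2$ in the open neighbourhood plus the centre would be a $butterfly$). I would split $V(H)$ into $N[v_1]\cap V(H)$ and the remaining set $R=I_1\cup\bigcup_{j\ge3}C_{1,j}$ of vertices of $H$ missing $v_1$. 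Every vertex of $R$ lies in $N(v_2)$, so $\langle R\rangle$ is $2K_2$-free with $\omega(\langle R\rangle)\le\omega-1$, giving $\chi(\langle R\rangle)\le\binom{\omega}{2}$ by Theorem \ref{2k2}; and since $v_1$ is universal in $\langle N[v_1]\rangle$, the graph $\langle N(v_1)\rangle$ is $2K_2$-free with clique number at most $\omega-1$, giving $\chi(\langle N[v_1]\rangle)\le\binom{\omega}{2}+1$. The efficiency here is essential: the single $2K_2$-free graph $\langle N[v_1]\rangle$ absorbs \emph{all} classes $C_{i,j}$ with $i\ge2$ under one shared $\binom{\omega}{2}$-palette, rather than paying a Wagon bound per class.

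The crux, and the step I expect to be hardest, is to merge these two colourings into only $\binom{\omega+1}{2}-1=\binom{\omega}{2}+\omega-1$ colours rather than the naive $2\binom{\omega}{2}+1$. Because $v_1$ has no neighbour in $R$, its colour is reusable on $R$; the real work is to show that $\langle R\rangle$ can be coloured within the palette already spent on $\langle N[v_1]\rangle$ plus at most $\omega-2$ genuinely new colours, in spite of the edges running between $R$ and $N(v_1)$. I would colour $\langle N[v_1]\rangle$ first by a Wagon-type scheme anchored at the clique $A$, and then extend to $R$, bounding the number of colours an $R$-vertex can see on its $N(v_1)$-neighbours; controlling this external interference so that it never forces more than $\omega-2$ new colours is the delicate accounting, and it is precisely here that the $-1$ in the recurrence is earned. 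Once $H$ is coloured with $\binom{\omega+1}{2}-1$ colours, combining with the private colouring of $C_{1,2}$ closes the induction.
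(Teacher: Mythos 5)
Your setup (Strategy 1, the base case via Theorems \ref{2k2}, \ref{2k2k4} and \ref{pk2}, and handing $\langle C_{1,2}\rangle$ to the induction hypothesis) is sound and matches the paper, but the inductive step has a genuine gap at exactly the point you flag as ``the delicate accounting'': you never show that $R=I_1\cup\bigcup_{j\ge3}C_{1,j}$ can be coloured from the palette already spent on $\langle N[v_1]\rangle$ plus at most $\omega-2$ new colours. This is not a routine greedy extension: $\langle R\rangle$ must itself be properly coloured while avoiding the colours on its neighbours in $N(v_1)$, and nothing in your sketch bounds that interference; a vertex of $R$ may see a large number of distinct colours in $N(v_1)$, so no counting argument of the kind you gesture at is given or obvious. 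As written, your argument only delivers the naive $2\binom{\omega}{2}+1$ colours for $V(G)\setminus C_{1,2}$, which is more than the $\binom{\omega+1}{2}-1$ the recurrence requires, so the induction does not close.

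The idea you are missing --- and the one the paper uses, which makes the whole merging problem disappear --- is that for $j\ge3$ each class $C_{i,j}$ is not merely $2K_2$-free but \emph{independent}. Indeed, if $a,b\in C_{i,j}$ were adjacent, pick $p\in\{1,\ldots,j-1\}\setminus\{i\}$ (possible since $j\ge3$); then $[v_p,C_{i,j}]$ is complete, $v_p$ is adjacent to $v_i$ and $v_j$, and by the very definition of $C_{i,j}$ neither $a$ nor $b$ is adjacent to $v_i$ or $v_j$, so $\langle\{a,b,v_p,v_i,v_j\}\rangle\cong butterfly$, the two disjoint wings being $ab$ and $v_iv_j$ with centre $v_p$. (Your weaker observation takes both wings inside $C_{i,j}$; the point is that one wing can be the clique edge $v_iv_j$ itself.) With this, the colouring is immediate: give colour $i$ to $\{v_i\}\cup I_i$ for $1\le i\le\omega$, one brand-new colour to each of the $\sum_{j=3}^{\omega}(j-1)=\binom{\omega}{2}-1$ independent sets $C_{i,j}$ with $j\ge3$, and, by induction, at most the $\{sK_2,butterfly\}$-bound many colours to $\langle C_{1,2}\rangle$; the total is the bound for $\{sK_2,butterfly\}$-free graphs plus $\omega+\binom{\omega}{2}-1=\binom{\omega+1}{2}-1$, exactly the recurrence. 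No splitting into $N[v_1]$ and $R$, and no palette merging, is needed: the $-1$ in the recurrence simply reflects that the classes with $j\ge3$ number $\binom{\omega}{2}-1$.
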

\begin{proof}
For $\omega=1$, the result is trivial.
For $\omega=2$, the result follows from Theorem \ref{2k2} and Theorem \ref{pk2}.
Now for $\omega\geq 3$, let us prove the results by induction on $p$.
For $p=2$, by using Theorem \ref{2k2} and Theorem \ref{2k2k4}, the result holds.
For $s\geq2$,  let us assume that the result is true.

Let $G$ be an $\{(s+1)K_2, butterfly\}$-free graph.
For $1\leq i\leq \omega$, let us assign the color $i$ to the vertices in$\{v_i\}\cup I_i$.
Clearly, $\langle C_{1,2}\rangle$ is $\{sK_2, butterfly\}$-free and hence by our assumption $\langle C_{1,2}\rangle$ is $f^{\thecount}_s(\omega)$-colorable.
For $j\geq 3$, we claim that $C_{i,j}$ is an independent set.
Suppose there exist adjacent vertices $a,b\in C_{i,j}$, where $j\geq 3$, then we can find $p\in\{1,2,\ldots,j\}\backslash\{i,j\}$ such that $\langle\{a,b,v_p,v_i,v_j\}\cong butterfly$.
Hence, for $j\geq3$ and $i<j$, the vertices in $C_{i,j}$ can be colored using a single new color.
Thus 
$G$ is $\big(\omega+f^{\thecount}_s(\omega)+\sum\limits_{j=3}^{\omega}(j-1)\big)=\left(f^{\thecount}_{s}(\omega)+\binom{\omega+1}{2}-1\right)$-colorable.
\end{proof}
\stepcounter{count}
Next let us consider the $\chi$-binding function for $\{pK_2, gem^+\}$-free graphs. 
For $p\geq2$, let us define a function $f^{\thecount}_p:\mathbb{N}\rightarrow\mathbb{N}$ as follows. 
For $t\geq2$, $m,p\geq3$ and $s\geq 4$, $f^{\thecount}_t(1)=1$, $f^{\thecount}_2(2)=3$, $f^{\thecount}_2(3)=4$, $f^{\thecount}_2(s)=\binom{s+1}{2}$, $f^{\thecount}_p(2)=2p-2$ and $f^{\thecount}_p(m)=f^{\thecount}_{p-1}(m)+\binom{m+1}{2}+m-2$.
If one closely observes the recursive definition, we can see that $f_p^{\thecount}$ is quadratic.
\begin{theorem}\label{pk2gem+}
For $p\geq2$, if $G$ is a $\{pK_2, gem^+\}$-free graph, then $\chi(G) \leq f^{\thecount}_p(\omega(G))$.
\end{theorem}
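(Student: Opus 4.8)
The plan is to follow Strategy 1 and induct on $p$, with the cases $\omega(G)\le 2$ handled by Theorems \ref{2k2} and \ref{pk2} and the base case $p=2$, $\omega(G)\ge 3$, handled by Theorems \ref{2k2} and \ref{2k2k4}, exactly as in Theorem \ref{pk2butterfly}. It therefore suffices to treat an $\{(s+1)K_2,gem^+\}$-free graph $G$ with $\omega(G)\ge 3$, assuming the bound $f^{\thecount}_s$ for every $\{sK_2,gem^+\}$-free graph. First I would take the usual partition $V(G)=A\cup\left(\bigcup_{i=1}^{\omega}I_i\right)\cup\left(\bigcup_{(i,j)\in L}C_{i,j}\right)$, assign colour $i$ to every vertex of $\{v_i\}\cup I_i$ (legitimate since each $I_i$ is independent), and peel off $\langle C_{1,2}\rangle$, which is $\{sK_2,gem^+\}$-free and hence colourable with $f^{\thecount}_s(\omega)$ colours by the induction hypothesis.

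The heart of the argument is the analysis of the blocks $C_{i,j}$ with $j\ge 3$. For each such block there is, by the construction of the partition, an index $s\in\{1,\dots,j\}\setminus\{i,j\}$ with $[v_s,C_{i,j}]$ complete, so $v_s$ dominates $\langle C_{i,j}\rangle$; moreover $\omega(\langle C_{i,j}\rangle)\le\omega-j+2$. I would then show that $\langle C_{i,j}\rangle$ is $P_4$-free, and therefore perfect: an induced $P_4$ inside $C_{i,j}$ together with the apex $v_s$ spans a $gem$, and the vertex $v_i\in A$, which is adjacent to $v_s$ but to no vertex of $C_{i,j}$, supplies the extra apex-attached vertex of $gem^+$, a contradiction. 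This is the point where $gem^+$-freeness is weaker than $gem$-freeness: it still forces perfectness of each $\langle C_{i,j}\rangle$, but, unlike in Lemma \ref{gemlemma} for the $gem$ case, it need not force the vanishing of $[a,I_l]$ or the strong component-adjacency relations, so the aggressive colour re-use from the $gem$ proof in Theorem \ref{pk2gem} is no longer available.

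With perfectness in hand the remainder is an accounting problem: the $\Theta(\omega^2)$ perfect graphs $\langle C_{i,j}\rangle$, $j\ge3$, each need up to $\omega-j+2$ colours, yet the whole of $A\cup\left(\bigcup_i I_i\right)\cup\left(\bigcup_{j\ge3}C_{i,j}\right)$ must be coloured within a palette of exactly $\binom{\omega+1}{2}+\omega-2$ colours, so that the grand total is $f^{\thecount}_s(\omega)+\binom{\omega+1}{2}+\omega-2=f^{\thecount}_{s+1}(\omega)$. I would organise this by colouring level by level in $j$, sharing one palette across the blocks of a common $j$ as far as the edge relations between distinct $C_{i,j},C_{k,j}$ permit, re-using the colours $\{1,\dots,\omega\}$ of $A$ wherever a block avoids the corresponding $v_k$, and letting the two least-constrained blocks $C_{1,3},C_{2,3}$ (each with a single admissible apex) absorb the surplus $\omega-1$ colours over the $gem$-style budget. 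The hard part will be pinning this re-use pattern down precisely: the structural facts above are routine, but keeping the count quadratic rather than the naive cubic bound, and landing on the exact constant $\binom{\omega+1}{2}+\omega-2$ demanded by the recursion, is the delicate step and the real obstacle.
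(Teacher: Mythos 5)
Your structural observations are correct and agree with the paper's (each individual $\langle C_{i,j}\rangle$, $j\geq 3$, is $P_4$-free via its dominating vertex $v_s$ and the pendant $v_i$, hence perfect, with $\omega(\langle C_{i,j}\rangle)\leq\omega-j+2$), but the proof is not complete: the counting step that you yourself flag as ``the real obstacle'' is precisely the step the paper has to supply, and the organization you sketch would not go through. Sharing one palette across the blocks $C_{1,j},\dots,C_{j-1,j}$ of a common level $j$ requires some control of the edges between distinct blocks at that level; otherwise their union, though a disjoint-in-name collection of perfect pieces, can have chromatic number far above its clique number. No such control is available for $gem^+$-free graphs: the $gem$-style argument (as in Theorem \ref{3oromega} or Theorem \ref{pk2gem}), which turns an edge between $C_{i,j}$ and $C_{k,j}$ into a $gem$, does not extend, because a $gem^+$ needs an additional vertex attached only to the apex and there is no candidate for it. Likewise, your suggestion to re-use the colors of $A$ on blocks avoiding the corresponding $v_k$ silently needs $[a,I_k]=\emptyset$ (since $I_k$ carries the color of $v_k$), i.e.\ an analogue of (\ref{GIp}) of Lemma \ref{gemlemma}, which you correctly note is unavailable here. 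So per-block perfectness alone leaves you at the naive cubic count, with no worked-out way down to the required quadratic one.

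The idea missing from your proposal, and the heart of the paper's proof, is to aggregate by rows (fixed first index $i$) rather than by levels (fixed second index $j$). For a fixed row, a single dominator and a single pendant work simultaneously for the \emph{whole union}: $v_2$ is complete to $\bigcup_{j\geq3}C_{1,j}$ while $v_1$ is anticomplete to it and adjacent to $v_2$; for $i\geq2$, $v_1$ is complete to $\bigcup_{j>i}C_{i,j}$ while $v_i$ is anticomplete to it and adjacent to $v_1$. Hence any induced $P_4$ lying anywhere in a row-union, even spread over several blocks, yields a $gem^+$ together with that dominator and pendant; so each row-union is $P_4$-free, hence perfect, and its clique number is at most $\omega-1$ for row $1$ and at most $\omega-i+1$ for row $i\geq2$ (as $v_1,\dots,v_{i-1}$ are complete to it). Giving each row its own fresh palette costs $(\omega-1)+\sum_{i=2}^{\omega-1}(\omega-i+1)=\binom{\omega}{2}+\omega-2$ new colors, which, added to the $\omega$ colors on $A\cup\left(\bigcup_i I_i\right)$ and the inductively obtained colors on $C_{1,2}$, lands exactly on the recursion value $f_{s+1}(\omega)=f_{s}(\omega)+\binom{\omega+1}{2}+\omega-2$. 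No analysis of edges between blocks is ever needed; that is exactly what your level-by-level scheme cannot avoid.
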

\begin{proof}
Let us prove the result by induction on $p$.
For $\omega=1$, the result is trivial.
For $\omega=2$, the result follows from Theorem \ref{2k2} and Theorem \ref{pk2}.
Now for $\omega\geq 3$, let us prove the results by induction on $p$.
For $p=2$, by using Theorem \ref{2k2} and Theorem \ref{2k2k4}, the result holds.
For $s\geq2$, let us assume that the result is true.

Let $G$ be an $\{(s+1)K_2, gem^+\}$-free graph with $\omega\geq3$.
For $1\leq i\leq \omega$, let us assign the color $i$ to all the vertices in$\{v_i\}\cup I_i$.
Clearly, $\langle C_{1,2}\rangle$ is $\{sK_2, gem^+\}$-free and hence by our assumption, $\langle C_{1,2}\rangle$ is  $f^{\thecount}_s(\omega)$-colorable.
Since $\left[v_2,\cup_{j=3}^{\omega}C_{1,j}\right]$ is complete, $\omega\left(\langle\cup_{j=3}^{\omega} C_{1,j}\rangle\right)\leq\omega(G)-1$.
Also, we can observe that $\langle\cup_{j=3}^{\omega}C_{1,j}\rangle$ is $P_4$-free. 
Suppose there exists a $P_4\sqsubseteq\left\langle\cup_{j=3}^{\omega}C_{1,j}\right\rangle$, say $P$.
Then $\langle V(P)\cup\{v_2,v_1\}\rangle\cong gem^+$, a contradiction.
Thus $\langle\cup_{j=3}^{\omega}C_{1,j}\rangle$ is perfect and hence $(\omega-1)$-colorable. 
Similarly, we can prove that for $2\leq i\leq\omega-1$, $\langle\cup_{j=i+1}^{\omega}C_{i,j}\rangle$ is also perfect and $\omega\left(\langle \cup_{j=i+1}^{\omega}C_{i,j}\rangle\right)\leq\omega(G)-i+1$. Hence  for $2\leq i\leq\omega-1$, $\langle\cup_{j=i+1}^{\omega}C_{i,j}\rangle$ is $(\omega(G)-i+1)$-colorable. Altogether, 
we see that $G$ is colorable with at most $\omega+f^{\thecount}_s(\omega)+\omega-1+\sum\limits_{i=2}^{\omega-1}i=f^{\thecount}_s(\omega)+\binom{\omega+1}{2}+\omega-2=f^{\thecount}_{s+1}(\omega)$ colors.
\end{proof}
Since $dart$ is an induced subgraph of $gem^+$, for $p\geq2$, $f^{\thecount}_p(\omega(G))$ will also be the $\chi$-binding function for $\{pK_2, dart\}$-free graphs.
\begin{corollary}\label{pk2dart}
For $p\geq2$, if $G$ is a $\{pK_2, dart\}$-free graph, then $\chi(G) \leq f^{\thecount}_p(\omega(G))$.
\end{corollary}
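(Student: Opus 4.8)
The plan is to observe that the bound for $\{pK_2, dart\}$-free graphs follows immediately from the corresponding result for $\{pK_2, gem^+\}$-free graphs, namely Theorem \ref{pk2gem+}, by a routine induced-subgraph containment argument. First I would recall the key structural fact that $dart$ is an induced subgraph of $gem^+$; this is exactly the hypothesis stated just before the corollary. The point is that the property of being $H$-free is inherited upward along induced-subgraph containment of forbidden graphs: if $H_1 \sqsubseteq H_2$, then every $H_1$-free graph is automatically $H_2$-free, because any induced copy of $H_2$ would contain an induced copy of $H_1$.

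Concretely, let $G$ be a $\{pK_2, dart\}$-free graph. I would argue that $G$ is also $\{pK_2, gem^+\}$-free. The $pK_2$-freeness is shared by hypothesis. For the second forbidden graph, suppose toward a contradiction that $G$ contained an induced $gem^+$; since $dart \sqsubseteq gem^+$, the vertex subset realizing $dart$ inside that induced $gem^+$ would induce a $dart$ in $G$, contradicting $dart$-freeness. Hence $G$ is $\{pK_2, gem^+\}$-free.

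With this containment established, the conclusion is a direct application of Theorem \ref{pk2gem+}: since $G$ is $\{pK_2, gem^+\}$-free, we have $\chi(G) \le f^{\thecount}_p(\omega(G))$, where $f^{\thecount}_p$ is precisely the function defined immediately before Theorem \ref{pk2gem+}, and this is the same function asserted in the corollary. No new coloring construction or induction is required, so the proof is essentially a single sentence invoking the previous theorem.

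The only step that deserves any care — and the closest thing to an obstacle, though it is not a genuine difficulty — is verifying the induced-subgraph relation $dart \sqsubseteq gem^+$ against the figures, since the whole argument hinges on it. Once that is checked (and the authors assert it preceding the corollary, consistent with the analogous $diamond \sqsubseteq gem$ reduction used for Corollary \ref{pk2diamond}), everything else is immediate. Thus the proof I would write is simply: \emph{Since $dart$ is an induced subgraph of $gem^+$, every $\{pK_2, dart\}$-free graph is $\{pK_2, gem^+\}$-free, and the result follows from Theorem \ref{pk2gem+}.}
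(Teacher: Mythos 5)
Your proposal is correct and matches the paper exactly: the paper's justification for Corollary \ref{pk2dart} is precisely the one-line remark that $dart$ is an induced subgraph of $gem^+$, so every $\{pK_2, dart\}$-free graph is $\{pK_2, gem^+\}$-free and Theorem \ref{pk2gem+} applies. You also state the inheritance direction correctly ($H_1 \sqsubseteq H_2$ makes $H_1$-freeness imply $H_2$-freeness), which is the only place such an argument could go wrong.
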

\subsection{$\chi$-binding functions of order $O(\omega^{p-1})$}\label{subsecthigher}
\stepcounter{count}
In Subsection \ref{subsecthigher}, we will consider classes of $pK_2$-free graphs for which we have established a $\chi$-binding function of order $O(\omega^{p-1})$.
We shall begin by considering the $\chi$-binding function for $\{pK_2, K_1+C_4\}$-free graphs.
For $p\geq2$, let us define a function $f^{\thecount}_p:\mathbb{N}\rightarrow\mathbb{N}$ 
as follows. 
For $s,t\geq2$ and $m,p\geq3$, $f^{\thecount}_t(1)=1$, $f^{\thecount}_2(s)=s+1$, $f^{\thecount}_p(2)=2p-2$ and $f^{\thecount}_p(m)=\left(\sum\limits_{j=2}^{m} f^{\thecount}_{p-1}(j)\right)+f^{\thecount}_{p-1}(m-1)+1$.
\begin{theorem}\label{pk2k1+c4}
For $p\geq2$, if $G$ is a $\{pK_2, K_1 + C_4\}$-free graph, then $\chi(G) \leq f^{\thecount}_p(\omega(G))$.
\end{theorem}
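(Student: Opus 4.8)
The plan is to apply Strategy 1. The cases $\omega(G)\le 2$ are disposed of by Theorems \ref{2k2} and \ref{pk2}, and for $\omega(G)\ge 3$ I would induct on $p$, the base case $p=2$ being exactly Theorem \ref{2k2k1+c4} (note that $f_2(\omega)=\omega+1$ matches its bound). So it suffices to treat an $\{(s+1)K_2,K_1+C_4\}$-free graph $G$ with $\omega\ge 3$, assuming the bound for $\{sK_2,K_1+C_4\}$-free graphs. I fix a maximum clique $A=\{v_1,\dots,v_\omega\}$ and use the partition $V(G)=A\cup(\bigcup_i I_i)\cup(\bigcup_{(i,j)\in L}C_{i,j})$.

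First I would recover the two structural facts already established in the proof of Theorem \ref{2k2k1+c4}, observing that both arguments there use only $K_1+C_4$-freeness (not $2K_2$-freeness) and hence survive verbatim in the $(s+1)K_2$-free setting: for all $i\ne j$ one has $[I_i,I_j]=\emptyset$ (so $\bigcup_i I_i$ is independent and receives a single color), and for every $j\ge 4$ and $i\ne k$ one has $[C_{i,j},C_{k,j}]=\emptyset$, so the block $\bigcup_{i<j}C_{i,j}$ is the disjoint union of the $\langle C_{i,j}\rangle$. The new ingredient forced by the weaker hypothesis is that each $\langle C_{i,j}\rangle$ is no longer independent but is $\{sK_2,K_1+C_4\}$-free: it is an induced subgraph of $G$, and since $C_{i,j}$ is anticomplete to the edge $v_iv_j$, any $sK_2$ inside it would extend to an $(s+1)K_2$ of $G$. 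Together with $\omega(\langle C_{i,j}\rangle)\le\omega-j+2$ and the induction hypothesis this yields $\chi(\langle C_{i,j}\rangle)\le f_s(\omega-j+2)$, and for $j\ge 4$ the whole (disjoint) block costs only $f_s(\omega-j+2)$ colors.

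The coloring then assigns color $\omega+1$ to $\bigcup_i I_i$, colors $1,\dots,\omega$ to $A$, and for each $j$ colors $\bigcup_{i<j}C_{i,j}$ with a palette of $f_s(\omega-j+2)$ colors that is fresh except that it reuses the clique color $j$ (legitimate, since the block is anticomplete to $v_j$ and color $j$ is otherwise used only on $v_j$); in the block $j=2$ I would additionally reuse color $1$ inside $C_{1,2}$. Taking the fresh palettes of distinct blocks to be pairwise disjoint makes every edge between two different blocks bichromatic, and the reuse of all $\omega$ clique colors exactly absorbs the $\omega$ colors nominally spent on $A$. Reindexing $k=\omega-j+2$ turns $\sum_{j}f_s(\omega-j+2)$ into $\sum_{k=2}^{\omega}f_s(k)=\sum_{j=2}^\omega f_s(j)$, so the running total matches the recursion up to a single exceptional term.

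The main obstacle is the block $j=3$, since the anticompleteness $[C_{1,3},C_{2,3}]=\emptyset$ cannot be derived: $\{1,2,3\}\setminus\{1,2,3\}=\emptyset$, so the $K_1+C_4$ argument has no index left to use. I would therefore color $\langle C_{1,3}\rangle$ and $\langle C_{2,3}\rangle$ with two \emph{disjoint} palettes of size $f_s(\omega-1)$ each (each being $\{sK_2,K_1+C_4\}$-free with clique number at most $\omega-1$, because $C_{1,3}$ is anticomplete to $v_1v_3$ and $C_{2,3}$ to $v_2v_3$), reusing color $3$ in only one of them. This is precisely the source of the extra summand $f_s(\omega-1)$ in $f_{s+1}(\omega)=\sum_{j=2}^\omega f_s(j)+f_s(\omega-1)+1$, the final $+1$ being the color of $\bigcup_i I_i$. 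The only genuinely bookkeeping-heavy step is checking that no edge among $A$, the $I_i$'s and the various blocks turns monochromatic, which reduces to the fact that each block's palette meets the clique colors in the single reused color.
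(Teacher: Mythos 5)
Your proposal is correct and takes essentially the same approach as the paper: the same Strategy 1 induction on $p$ with Theorem \ref{2k2k1+c4} as the base case, the same carried-over structural facts (independence of $\bigcup_i I_i$ and anticompleteness of the $C_{i,j}$'s within each block $j\geq 4$), the same observation that each $\langle C_{i,j}\rangle$ is $\{sK_2,K_1+C_4\}$-free, and the same palette scheme in which each block recycles clique colors and the inseparability of $C_{1,3}$ and $C_{2,3}$ forces the extra $f_s(\omega-1)$ term. The only difference is immaterial bookkeeping of which clique colors are recycled where (you reuse $1,2$ in $C_{1,2}$ and $3$ in one $j=3$ part; the paper reuses $1$, $3$, $2$ in $C_{1,2}$, $C_{1,3}$, $C_{2,3}$ respectively), and both counts arrive at the same total.
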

\begin{proof}
Let us prove by applying Strategy 1.
Let $G$ be an $\{(s+1)K_2, K_1 + C_4\}$-free graph with $\omega\geq3$.
Let us begin by assigning the color $i$ to the vertex $v_i$ for $1\leq i\leq\omega$.
Clearly, for $(i,j)\in L$,  each $\langle C_{i,j}\rangle$ is $\{sK_2, K_1 + C_4\}$-free. 
Also, note the color $1,3,2$ can be again used while coloring the vertices in $C_{1,2}, C_{1,3}$ and $C_{2,3}$ respectively.
Thus the vertices in $C_{1,2}, C_{1,3}$ and $C_{2,3}$ can be colored with at most $f^{\thecount}_s(\omega)-1, f^{\thecount}_s(\omega-1)-1$ and $f^{\thecount}_s(\omega-1)-1$ new colors respectively.
As discussed in Theorem \ref{2k2k1+c4}, we have for $j\geq4$, $i\neq k$, $[C_{i,j},C_{k,j}]=\emptyset$ and $\cup_{i=1}^{\omega} I_i$ is an independent set.
Hence we can color the vertices in$\cup_{i=1}^{j-1} C_{i,j}$ with the color $j$ and $f^{\thecount}_s(\omega-j+2)-1$ new colors.
Finally, the vertices in$\cup_{i=1}^{\omega} I_i$ can be colored with a single new color.
Therefore, altogether $G$ can be colored with at most $\omega+f^{\thecount}_s(\omega)-1+ f^{\thecount}_s(\omega-1)-1+f^{\thecount}_s(\omega-1)-1+\left(\sum\limits_{j=4}^{\omega}f^{\thecount}_s(\omega-j+2)-1\right)+1=\left(\sum\limits_{j=2}^{\omega}f^{\thecount}_s(j)\right)+f^{\thecount}_s(\omega-1)+1=f^{\thecount}_{s+1}(\omega)$ colors.
\end{proof}
\stepcounter{count}
Next, for $p\geq2$, let us define a function $f^{\thecount}_p:\mathbb{N}\rightarrow\mathbb{N}$ (which will serve as a $\chi$-binding function for $\{pK_2, C_4\}$-free graphs) as follows.
For $s,t\geq2$ and $m,p\geq3$, $f^{\thecount}_t(1)=1$, $f^{\thecount}_2(s)=s+1$, $f^{\thecount}_p(2)=2p-2$ and $f^{\thecount}_p(m)=\left(\sum\limits_{j=2}^{m} f^{\thecount}_{p-1}(j)\right)+1$.
\begin{theorem}\label{pk2c4}
For $p\geq2$, if $G$ is a $\{pK_2, C_4\}$-free graph, then $\chi(G) \leq f^{\thecount}_p(\omega(G))$.
\end{theorem}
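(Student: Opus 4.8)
The plan is to mimic the structure of the proof of Theorem \ref{pk2k1+c4} via Strategy 1, but exploit the stronger hypothesis that $G$ is $\{pK_2,C_4\}$-free rather than merely $\{pK_2,K_1+C_4\}$-free. Since $K_1+C_4$ contains an induced $C_4$, every $C_4$-free graph is automatically $K_1+C_4$-free, so all the structural facts established in Theorem \ref{2k2k1+c4} and Theorem \ref{pk2k1+c4} remain available. The payoff of forbidding $C_4$ outright should be a saving in the recursion: the extra $f^{\thecount}_{p-1}(m-1)$ term appearing in the $K_1+C_4$ recurrence should disappear, leaving exactly $f^{\thecount}_p(m)=\left(\sum_{j=2}^{m} f^{\thecount}_{p-1}(j)\right)+1$.

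First I would dispose of the base cases through Strategy 1: for $\omega=1$ the bound is trivial, for $\omega=2$ it follows from Theorem \ref{2k2} and Theorem \ref{pk2}, and for $p=2$ it follows from Corollary \ref{2k2c4} (which gives $\chi(G)\leq\omega+1=f^{\thecount}_2(\omega)$). Then I would set up the induction on $p$, assuming the claim for $\{sK_2,C_4\}$-free graphs and taking $G$ to be $\{(s+1)K_2,C_4\}$-free with $\omega\geq3$. As in Theorem \ref{pk2k1+c4}, assign color $i$ to $v_i$ for $1\leq i\leq\omega$, observe that each $\langle C_{i,j}\rangle$ is $\{sK_2,C_4\}$-free, and note that $\bigcup_{i=1}^{\omega} I_i$ is independent and receives a single new color. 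The key structural improvement I expect is that in a $C_4$-free graph the sets $C_{1,3}$ and $C_{2,3}$ collapse: because any vertex in these sets is nonadjacent to two clique vertices, a $C_4$ appears more readily, so one should be able to reuse the clique colors without incurring the separate $f^{\thecount}_s(\omega-1)-1$ charges that the $K_1+C_4$ case needed.

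The main obstacle will be verifying that the colors already used on $A$ can be reused across the sets $C_{i,j}$ with $j\geq4$ without conflict, and pinning down exactly how much the $C_4$-freeness saves over the $K_1+C_4$ analysis. Concretely, I would show that for $j\geq4$ the color $j$ and $f^{\thecount}_s(\omega-j+2)-1$ new colors suffice for $\bigcup_{i=1}^{j-1}C_{i,j}$ (exploiting $\omega(\langle C_{i,j}\rangle)\leq\omega-j+2$), and that $C_{1,2},C_{1,3},C_{2,3}$ together need at most $\left(\sum_{j=2}^{3}f^{\thecount}_s(j)\right)$ worth of the budget rather than the larger amount in Theorem \ref{pk2k1+c4}. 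Summing the contributions $\omega$ (for $A$) plus the per-$C_{i,j}$ costs plus $1$ (for the $I_i$'s) and checking, via the telescoping already implicit in the recurrence, that the total equals $\left(\sum_{j=2}^{\omega}f^{\thecount}_s(j)\right)+1=f^{\thecount}_{s+1}(\omega)$ closes the induction. The delicate accounting step is ensuring the saved $f^{\thecount}_{s}(\omega-1)$ term is legitimately absorbed by the reuse of clique colors, which is precisely where $C_4$-freeness (as opposed to $K_1+C_4$-freeness) does the work.
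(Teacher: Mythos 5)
Your overall plan is the paper's: Strategy 1, induction on $p$, the observation that a $C_4$-free graph is automatically $(K_1+C_4)$-free so the whole coloring scheme of Theorem \ref{pk2k1+c4} carries over, and the expectation that $C_4$-freeness saves exactly one $f_{s}(\omega-1)$ term from the recurrence. However, the single piece of new mathematics that this theorem requires is precisely the step you defer (``the sets $C_{1,3}$ and $C_{2,3}$ collapse \ldots a $C_4$ appears more readily \ldots the delicate accounting step is ensuring the saved $f_s(\omega-1)$ term is legitimately absorbed''), and you never state or prove it; as written, nothing in the proposal justifies dropping the $f_{s}(\omega-1)$ charge, so there is a genuine gap exactly at the point where the proof has content.

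The concrete fact the paper proves is $[C_{1,3},C_{2,3}]=\emptyset$. Indeed, if $a\in C_{1,3}$ and $b\in C_{2,3}$ were adjacent, then $a\notin C_{1,2}$ forces $av_2\in E(G)$ and $b\notin C_{1,2}\cup C_{1,3}$ forces $bv_1\in E(G)$, while $av_1,bv_2\notin E(G)$ by definition of the partition; hence $\langle\{a,b,v_1,v_2\}\rangle\cong C_4$, a contradiction. (Note that ``collapse'' cannot mean these sets are empty; they need not be.) With this fact, $C_{1,3}$ and $C_{2,3}$ --- each $\{sK_2,C_4\}$-free with clique number at most $\omega-1$ --- can be colored from one \emph{common} palette of $f_s(\omega-1)$ colors containing color $3$, instead of the two separate palettes needed in Theorem \ref{pk2k1+c4}; and since color $2$ is then no longer spent on $C_{2,3}$, it can be reused (together with color $1$) inside $C_{1,2}$, which therefore needs only $f_s(\omega)-2$ new colors. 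These two effects together are exactly what remove the extra $f_s(\omega-1)$ term and make the count telescope to $\left(\sum_{j=2}^{\omega}f_s(j)\right)+1$. A further small slip: your stated budget for $C_{1,2}\cup C_{1,3}\cup C_{2,3}$ should be $f_s(\omega)+f_s(\omega-1)$, i.e., $\sum_{j=2}^{3}f_s(\omega-j+2)$, not $\sum_{j=2}^{3}f_s(j)=f_s(2)+f_s(3)$.
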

\begin{proof}
We shall apply Strategy 1 to prove the result. 
Let $G$ be a $\{(s+1)K_2, C_4\}$-free graph with $\omega\geq3$. Clearly $G$ is also $\{(s+1)K_2, K_1+C_4\}$-free.
Thus all the properties which was proved in Theorem \ref{pk2k1+c4} will hold for $G$ and hence $G\backslash (C_{1,2}\cup C_{1,3}\cup C_{2,3})$ can be colored with at most $\omega+\left(\sum\limits_{i=4}^{\omega}[f^{\thecount}_s(\omega-i+2)-1]\right)+1$ colors by using the coloring technique used in Theorem \ref{pk2k1+c4}.
Also $[C_{1,3}, C_{2,3}]=\emptyset$ (Otherwise there will exist vertices $a\in C_{1,3}$ and $b\in C_{2,3}$ such that $ab\in E(G)$ and hence $\langle\{a,b,v_1,v_2\}\rangle\cong C_4$, a contradiction).
Also the colors $\{1,2\}$ and the color $3$ can be used while coloring the vertices in $C_{1,2}$ and $C_{1,3}\cup C_{2,3}$ respectively.
Hence the vertices in $C_{1,2}$ and $C_{1,3}\cup C_{2,3}$ can be colored using at most $f^{\thecount}_s(\omega)-2$ new colors and $f^{\thecount}_s(\omega-1)-1$ new colors respectively.
Therefore, altogether $G$ can be colored with at most $\omega+f^{\thecount}_s(\omega)-2+\left(\sum\limits_{j=3}^{\omega}[f^{\thecount}_s(\omega-j+2)-1]\right)+1=\left(\sum\limits_{j=2}^{\omega}f^{\thecount}_s(j)\right)+1=f^{\thecount}_{s+1}(\omega)$ colors.
\end{proof}
Next, let us recall a result due to J. L. Fouquet et al. in \cite{fouquet1995graphs}.
\begin{theorem}[\cite{fouquet1995graphs}]\label{2k2house}
If $G$ is a $\{2K_2, \overline{P_5}\}$-free graph, then $\chi(G) \leq \frac{3}{2}\omega(G)$.
\end{theorem}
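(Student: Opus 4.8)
The plan is to combine the vertex partition used throughout the paper with the Strong Perfect Graph Theorem, because $\{2K_2,\overline{P_5}\}$-freeness pins down exactly which odd holes and antiholes can survive. First I would record a structural observation: for $k\geq 3$ the cycle $C_{2k+1}$ contains an induced $2K_2$ (take two edges separated by two vertices on each side), while the antihole $\overline{C_{2k+1}}$ contains an induced $\overline{P_5}$, equivalently $C_{2k+1}$ contains an induced $P_5$. Hence a $\{2K_2,\overline{P_5}\}$-free graph has no odd hole and no odd antihole other than $C_5$ itself (note $C_5=\overline{C_5}$ is both $2K_2$-free and $\overline{P_5}$-free, and $C_5$ contains no induced $P_5$, so it is not excluded). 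By the Strong Perfect Graph Theorem, such a graph is perfect if and only if it is $C_5$-free. This isolates both the source of imperfection and the extremal object, since $C_5$ has $\chi=3=\tfrac32\omega$, confirming the factor $\tfrac32$ cannot be improved.

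Granting this reduction, the perfect case is immediate: if $G$ is $C_5$-free then $\chi(G)=\omega(G)\leq\tfrac32\omega(G)$. For the general case I would fix a maximum clique $A=\{v_1,\dots,v_\omega\}$ and use $V(G)=A\cup\bigl(\bigcup_i I_i\bigr)\cup\bigl(\bigcup_{(i,j)\in L}C_{i,j}\bigr)$, recalling that $2K_2$-freeness makes every $C_{i,j}$ independent. Assigning colour $i$ to $v_i$, the core is a reuse scheme: each block $C_{i,j}$ is non-adjacent to both $v_i$ and $v_j$, so it may borrow colour $i$ or colour $j$. Thinking of the blocks as the edges of $K_\omega$ and of each reuse as orienting an edge toward an endpoint, the aim is to partition $V(G)$ into a $C_5$-free (hence perfect) part absorbing the $\omega$ clique colours and a remainder carrying the $C_5$-obstructions. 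For the remainder I would exploit $2K_2$-freeness: two vertex-disjoint induced copies of $C_5$ with no edge between them already contain a $2K_2$, so the $C_5$-structure is tightly interconnected and, I expect, coverable by at most $\lceil\omega(G)/2\rceil$ independent sets; summing gives $\chi(G)\leq\omega(G)+\tfrac{\omega(G)}{2}=\tfrac32\omega(G)$, with the $I_i$ handled by the same borrowing.

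The main obstacle is precisely the control of the $C_5$-part. Unlike the $gem$-, $HVN$- and $K_1+C_4$-free cases treated in Section~\ref{sect2k2} (Theorems~\ref{3oromega}, \ref{2k2hvn} and~\ref{2k2k1+c4}), $\overline{P_5}$-freeness does \emph{not} force the blocks $C_{i,j}$ to be pairwise non-adjacent: the natural bad configurations produce an induced $gem$ rather than an $\overline{P_5}$, so the convenient ``one new colour per index'' bookkeeping is unavailable and the reuse conflicts can be genuinely numerous. Making the factor $\tfrac32$ exact therefore hinges on a clean description of how induced $C_5$'s sit inside a $2K_2$-free graph—showing that they pack into at most $\omega(G)/2$ colour classes while remaining compatible with the reuse of the $\omega$ clique colours. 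The tightness of $C_5$ is reassuring, but it also signals that any slack in this $C_5$-counting would immediately break the bound, so the delicate step is the simultaneous accounting of the perfect part and the $C_5$-part rather than either piece in isolation.
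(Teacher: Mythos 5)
Your first paragraph is correct and isolates the right dichotomy: odd holes of length at least $7$ contain an induced $2K_2$, odd antiholes of length at least $7$ contain an induced $\overline{P_5}$ (since their complements, long odd cycles, contain induced $P_5$'s), and $C_5=\overline{C_5}$ is both $2K_2$-free and $\overline{P_5}$-free. Hence, by the Strong Perfect Graph Theorem, a $\{2K_2,\overline{P_5}\}$-free graph is perfect if and only if it has no induced $C_5$, and in the perfect case the bound is immediate. Note also that the paper offers no proof to compare against --- Theorem \ref{2k2house} is quoted from \cite{fouquet1995graphs} --- so your argument has to stand entirely on its own.

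The genuine gap is the whole imperfect case, which is where the theorem actually lives. You never define the promised partition of $V(G)$ into a $C_5$-free ``perfect part'' and a ``$C_5$-part,'' never prove that such a partition exists, and the key quantitative claim --- that the $C_5$-part can be covered by at most $\lceil\omega(G)/2\rceil$ independent sets --- is introduced with ``I expect'' and then, in your own closing paragraph, flagged as the unresolved obstacle. The one concrete observation you offer (two vertex-disjoint induced copies of $C_5$ with no edges between them would yield a $2K_2$) only says that induced $C_5$'s must attack one another; it gives no bound on the chromatic number of the subgraph containing them, let alone a bound of order $\omega(G)/2$ (Wagon's Theorem \ref{2k2} would give only a quadratic bound there). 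Moreover, even granting a partition $V(G)=P\cup R$ with $\langle P\rangle$ perfect, the requirement $\chi(\langle R\rangle)\leq\omega(G)/2$ is essentially a restatement of the theorem for the imperfect part rather than a reduction of it. What is missing is a structural description of $\{2K_2,\overline{P_5}\}$-free graphs that do contain an induced $C_5$; in \cite{fouquet1995graphs} (which predates the Strong Perfect Graph Theorem) this takes the form of a decomposition theorem for $(P_5,\overline{P_5})$-free graphs containing a $C_5$, from which the $\frac{3}{2}\omega(G)$ bound is then derived. Without that ingredient, or a substitute for it, your proposal is a correct reduction followed by a restatement of the problem.
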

\stepcounter{count}
Finally, let us find a $\chi$-binding function for $\{pK_2, \overline{P_5}\}$-free graphs.
For $p\geq2$, let us define a function $f^{\thecount}_p:\mathbb{N}\rightarrow\mathbb{N}$ as follows.
For $s,t\geq2$ and $m,p\geq3$, $f^{\thecount}_t(1)=1$, $f^{\thecount}_2(s)=\frac{3}{2}s$, $f^{\thecount}_p(2)=2p-2$ and $f^{\thecount}_p(m)=m+\sum\limits_{j=2}^{m} f^{\thecount}_{p-1}(j)$.
\begin{theorem}\label{pk2house}
For $p\geq2$, if $G$ is a $\{pK_2, \overline{P_5}\}$-free graph, then $\chi(G) \leq f^{\thecount}_p(\omega(G))$.
\end{theorem}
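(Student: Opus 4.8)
The plan is to follow Strategy~1, exactly as in the companion results for $\{pK_2, K_1+C_4\}$-free and $\{pK_2, C_4\}$-free graphs (Theorems~\ref{pk2k1+c4} and~\ref{pk2c4}), proving the bound $\chi(G)\le f^{\thecount}_p(\omega(G))$ by induction on $p$. The base cases are handled for free: when $\omega=1$ the claim is trivial, when $\omega=2$ it follows from Theorem~\ref{2k2} and Theorem~\ref{pk2}, and the case $p=2$ is precisely Theorem~\ref{2k2house}, which gives $\chi(G)\le\frac32\omega(G)=f^{\thecount}_2(\omega)$. So the real work is the inductive step: assume the result for $\{sK_2,\overline{P_5}\}$-free graphs and let $G$ be $\{(s+1)K_2,\overline{P_5}\}$-free with $\omega\ge 3$, decomposed as $V(G)=A\cup\left(\bigcup_{i=1}^{\omega}I_i\right)\cup\left(\bigcup_{(i,j)\in L}C_{i,j}\right)$.

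First I would color the clique $A$ by giving $v_i$ the color $i$ for $1\le i\le\omega$, contributing the additive $\omega$ term visible in the recurrence $f^{\thecount}_p(m)=m+\sum_{j=2}^{m}f^{\thecount}_{p-1}(j)$. The key structural point is that each induced subgraph $\langle C_{i,j}\rangle$ is $\{sK_2,\overline{P_5}\}$-free (it inherits $\overline{P_5}$-freeness from $G$, and it is $sK_2$-free because any $sK_2$ inside $C_{i,j}$ together with the edge $v_iv_j$—both endpoints of which are non-adjacent to all of $C_{i,j}$—would build an $(s+1)K_2$ in $G$). Since $\omega(\langle C_{i,j}\rangle)\le\omega(G)-j+2$ by the bound noted in the introduction, the induction hypothesis lets me color $\langle C_{i,j}\rangle$ with at most $f^{\thecount}_{s}(\omega-j+2)$ colors, and more efficiently the sets sharing the second index $j$ can be handled together.

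The arithmetic heart of the step is matching the count to the recurrence. The clean way is to color, for each fixed $j$, the block $\bigcup_{i=1}^{j-1}C_{i,j}$ using a palette of size $f^{\thecount}_{s}(\omega-j+2)$ or so, reusing an old color where the block is non-adjacent to an already-colored clique vertex (the ``$+1$'' savings exploited in Theorems~\ref{pk2k1+c4} and~\ref{pk2c4}), and summing the new colors so that $\sum_j$ telescopes into $\sum_{j=2}^{\omega}f^{\thecount}_{s}(j)$. The main obstacle—and the step I expect to consume the most care—is establishing enough independence and non-adjacency structure among the $C_{i,j}$ and the $I_i$ to justify the color reuse that the recurrence demands; unlike the $C_4$ case, $\overline{P_5}$-freeness alone does not force $[I_i,I_j]$ or $[C_{i,j},C_{k,j}]$ to be empty, so I would need to verify precisely which adjacencies a forbidden $\overline{P_5}$ rules out and confirm that the coefficient pattern of $f^{\thecount}_p$ can indeed be realized under the weaker constraints this forbidden subgraph provides. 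If the naive reuse fails, the fallback is to color each $C_{i,j}$ with a fresh palette and check that the total $\omega+\sum_{(i,j)}f^{\thecount}_{s}(\omega-j+2)$ still collapses to $f^{\thecount}_{s+1}(\omega)=\omega+\sum_{j=2}^{\omega}f^{\thecount}_{s}(j)$ after regrouping by the index $j$.
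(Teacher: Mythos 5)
Your overall skeleton --- Strategy 1, induction on $p$ with the stated base cases, coloring the clique first, and handling the sets $C_{i,j}$ grouped by their second index $j$ --- is exactly the paper's route. But the step you explicitly leave unresolved is the heart of the argument, and you lean the wrong way on it. You write that ``$\overline{P_5}$-freeness alone does not force $[I_i,I_j]$ or $[C_{i,j},C_{k,j}]$ to be empty.'' For the sets $C_{i,j}$ this is false: for $j\geq 3$ and $i\neq k$ with $i,k<j$, if $a\in C_{i,j}$ and $b\in C_{k,j}$ were adjacent then, recalling that $a$ is adjacent to $v_k$, that $b$ is adjacent to $v_i$, and that both are non-adjacent to $v_j$, the complement of $\langle\{a,v_k,v_i,b,v_j\}\rangle$ is the path $v_i\,a\,v_j\,b\,v_k$, so $\langle\{a,v_k,v_i,b,v_j\}\rangle\cong\overline{P_5}$, a contradiction. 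This non-adjacency is precisely what lets one color the whole block $\bigcup_{i=1}^{j-1}C_{i,j}$ with a single fresh palette of size $f_s(\omega-j+2)$ (writing $f_s$ for the binding function in question), and it is the only structural lemma the proof needs. The worry about $[I_i,I_j]$ is a red herring: each $\{v_i\}\cup I_i$ is an independent set, so one simply assigns color $i$ to $\{v_i\}\cup I_i$, and no relation between different $I_i$'s is ever used; note this is also how the additive $\omega$ term absorbs the $I_i$'s, which your plan (coloring only $A$) leaves homeless. Likewise, no ``$+1$ savings'' or reuse of clique colors is needed here, unlike in Theorems \ref{pk2k1+c4} and \ref{pk2c4}: the recurrence $f_p(m)=m+\sum_{j=2}^{m}f_{p-1}(j)$ contains no $-1$'s, and all palettes are simply disjoint.

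Your fallback cannot rescue the gap, because its arithmetic is wrong. Giving each $C_{i,j}$ its own fresh palette costs $\omega+\sum_{j=2}^{\omega}(j-1)f_s(\omega-j+2)$ colors, since there are $j-1$ sets with second index $j$; this does not ``collapse after regrouping'' to $\omega+\sum_{j=2}^{\omega}f_s(j)$. Already for $\omega=3$ it gives $\omega+f_s(3)+2f_s(2)$ against the required $\omega+f_s(3)+f_s(2)$, and the discrepancy grows with $\omega$. The multiplicity $(j-1)$ can only be removed by the non-adjacency claim above, which lets all $C_{i,j}$ with the same $j$ share one palette. So the proof is completed exactly by proving the $\overline{P_5}$ configuration you doubted; without it, neither your main route nor your fallback yields the stated bound.
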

\begin{proof}
Here also, we shall prove by using Strategy 1.
Let $G$ be an $\{(s+1)K_2, \overline{P_5}\}$-free graph with $\omega\geq3$.
For $1\leq i\leq \omega$, let us assign the color $i$ to the vertices in $\{v_i\}\cup I_i$.
Clearly, for $(i,j)\in L$,  each $\langle C_{i,j}\rangle$ is $\{sK_2, \overline{P_5}\}$-free
and hence the vertices in $C_{1,2}$ can be colored with at most $f^{\thecount}_s(\omega)$ colors.
For $j\geq 3$, we claim that  $[C_{i,j},C_{k,j}]=\emptyset$.
Suppose there exist vertices $a\in C_{i,j}$ and $b\in C_{k,j}$ such that $ab\in E(G)$, then $\langle \{a,v_k,v_i,b,v_j\}\rangle\cong \overline{P_5}$, a contradiction.
Thus for each $j\geq3$, the vertices in $\cup_{i=1}^{j-1}C_{i,j}$ can be colored using $f^{\thecount}_s(\omega-j+2)$ colors.
Therefore, altogether $G$ can be colored with at most $\omega+f^{\thecount}_s(\omega)+\sum\limits_{j=3}^{\omega}f^{\thecount}_s(\omega-j+2)
=\omega+\sum\limits_{j=2}^{\omega}f^{\thecount}_s(j)=f^{\thecount}_{s+1}(\omega)$ colors.
\end{proof}
\subsection*{Acknowledgment}
\small For the first author,  this research was supported by the Council of Scientific and Industrial Research,  Government of India, File No: 09/559(0133)/2019-EMR-I.
And for the second author, this research was supported by SERB DST, Government of India, File no: EMR/2016/007339. Also, for the third author, this research was supported by the UGC-Basic Scientific Research, Government of India, Student id: gokulnath.res@pondiuni.edu.in. 
\bibliographystyle{ams}
\bibliography{ref}
\end{titlepage}
\listoftodos
\end{document}